\documentclass[12pt]{article}
\usepackage[letterpaper,margin=0.85in]{geometry}

\usepackage[british]{babel}
\usepackage[latin9]{inputenc}
\usepackage{amssymb}
\usepackage{amsthm}
\usepackage{xcolor}
\usepackage{blindtext}
\usepackage{amsmath, amsfonts}
\usepackage{empheq}
\usepackage{mdframed}

\DeclareMathOperator{\dist}{dist}

\DeclareMathOperator*{\argmin}{arg\,min}

\usepackage{txfonts}
\usepackage{enumerate}
\numberwithin{equation}{section}
\usepackage{tabulary}
\usepackage{bbm}
\usepackage{verbatim}

\usepackage{lmodern}
\usepackage{paralist}
\usepackage[activate]{pdfcprot}
  \usepackage{paralist}
  \usepackage{graphics} 
  \usepackage{epsfig} 
\usepackage{graphicx}
\usepackage{epic}
\usepackage[Option]{overpic}
\usepackage{epstopdf}
\usepackage[font={small,it}]{caption}
\usepackage{subcaption}
\captionsetup[subfigure]{labelfont=rm}
\usepackage{float}
\usepackage{slashed}
\usepackage{mathrsfs}
\usepackage{oldgerm}
\usepackage{dsfont}
\usepackage{setspace}
\usepackage{mathtools}
\usepackage[all]{xy}

\newcommand{\rmd}{\mathrm{d}}
\newcommand{\rmD}{\mathrm{D}}
\renewcommand{\epsilon}{\varepsilon}

\theoremstyle{plain}
\newtheorem{theorem}{Theorem}[section]

\newtheorem{lemma}[theorem]{Lemma}
\newtheorem{proposition}[theorem]{Proposition}



\theoremstyle{definition}

\theoremstyle{definition}
\newtheorem{definition and lemma}[theorem]{Definition and Lemma}

\theoremstyle{remark}

\newcommand{\tostar}{\overset{*}{\lower0.5em\hbox{$\smash{\scriptscriptstyle\rightharpoonup}$}}}


\def\txta{{\textnormal{a}}}

\def\txte{{\textnormal{e}}}

\def\txtr{{\textnormal{r}}}


\newcommand{\be}{\begin{equation}}
\newcommand{\ee}{\end{equation}}
\newcommand{\benn}{\begin{equation*}}
\newcommand{\eenn}{\end{equation*}}
\newcommand{\bea}{\begin{eqnarray}}
\newcommand{\eea}{\end{eqnarray}}
\newcommand{\beann}{\begin{eqnarray*}}
\newcommand{\eeann}{\end{eqnarray*}}

\numberwithin{equation}{section}

\title{Desingularization of the transcritical bifurcation in discrete time}

\author{Maximilian Engel}  
\date{\today} 

\begin{document}
\author{Maximilian Engel\thanks{Zentrum Mathematik der TU M\"{u}nchen,
Boltzmannstr. 3, D-85748 Garching bei M\"{u}nchen}~ and Christian Kuehn\footnotemark[1]}
 
\title{Discretized Fast-Slow Systems \\ 
near Transcritical Singularities}

\maketitle

\begin{abstract}
We extend slow manifolds near a transcritical singularity in a fast-slow 
system given by the explicit Euler discretization of the corresponding 
continuous-time normal form. The analysis uses the blow-up method and 
direct trajectory-based estimates. We prove that the qualitative behaviour 
is preserved by a time-discretization with sufficiently small step size.
This step size is fully quantified relative to the time scale separation. Our
proof also yields the continuous-time results as a special case and provides
more detailed calculations in the classical (or scaling) chart.
\end{abstract}

{\bf Keywords:} transcritical bifurcation, slow manifolds, invariant manifolds,  
loss of normal hyperbolicity, blow-up method, discretization, maps.

{\bf Mathematics Subject Classification (2010):} 34E15, 34E20, 37M99, 37G10, 34C45, 39A99.
\section{Introduction}
We study the dynamics of the two-dimensional quadratic map
\begin{equation} 
\label{map_intro}
p: \begin{pmatrix}
x \\ y 
\end{pmatrix}
\mapsto \begin{pmatrix}
\tilde{x}  \\ \tilde y 
\end{pmatrix}
= \begin{pmatrix}
x + h(x^2 - y^2 + \lambda \epsilon) \\
y + \epsilon h
\end{pmatrix}
\end{equation}
for $h, \epsilon > 0$. We interpret $\epsilon$ as a small time scale 
sepration parameter between the fast variable $x$ and the slow variable $y$. 
The parameter $h$ can be viewed as the step size for the explicit Euler 
discretization of the corresponding ordinary differential equation (ODE)
\begin{align} 
\label{ODE_intro}
\begin{array}{r@{\;\,=\;\,}l}
x' & x^2 - y^2 + \lambda \epsilon \,, \\
y' & \epsilon\,,
\end{array}
\end{align}
which represents the normal form of a fast-slow system exhibiting 
a \emph{transcritical singularity} at the origin. The term transcritical 
refers to the fact that, if $y$ is seen as a bifurcation parameter for 
the flow in the $x$-variable, a transcritical bifurcation occurs at the 
origin $(x,y)=(0,0)$. The origin is singular since hyperbolicity of the 
dynamics breaks down at this point. The same holds for the 
map~\eqref{map_intro}. 

In the case of~\eqref{ODE_intro}, Krupa and Szmolyan~\cite{ks2001/2} analyze 
the dynamics around the origin by using the \emph{blow-up method} for vector 
fields with singularities. The key idea to use the blow-up 
method~\cite{Du78,Du93} for fast-slow systems goes back to Dumortier and 
Roussarie~\cite{DuRo96}. They observed that this technique may convert non-hyperbolic 
singularities at which fast and slow directions interact into partially hyperbolic 
problems. The method inserts a suitable manifold, e.g.~a sphere, at the singularity 
and describes the extension of hyperbolic objects through a neighbourhood of the 
singularity via the partially hyperbolic dynamics on this manifold; see 
e.g.~\cite[Chapter~7]{ku2015} for an introduction and~\cite{DeMaesschalckDumortier7,
DeMaesschalckDumortier4,DeMaesschalckWechselberger,GucwaSzmolyan,ks2011,KuehnUM,KuehnHyp} 
for a list of a few, yet by no means exhaustive, list of different applications to 
planar fast-slow systems.

Krupa and Szmolyan also used the blow-up method for normal form of \emph{fold 
singularities} in fast-slow systems~\cite{ks2011}. For this case Nipp and 
Stoffer~\cite{ns2013} transform the blow-up technique to the corresponding 
explicit Runge-Kutta, in particular Euler, discretization and prove the extension 
of slow manifolds for the discrete time system around the singularity. They treat 
the discretized dynamics in vicinity to the fold singularity as an application of 
a more general existence theory for invariant manifolds they develop in~\cite{ns2013}.

In this paper, we use instead a direct approach to analyze, how trajectories induced 
by~\eqref{map_intro} pass the singularity at the origin. This approach allows us to 
obtain pathwise control over the transitions in and within the different blow-up
charts. The singularity is blown up to a sphere on which trajectories can be described 
directly via the map. This leads to the main result of the paper, 
Theorem~\ref{transcritical_discrete}, which is the discrete-time extension 
of~\cite[Theorem 2.1]{ks2001/2}. In this context, we only impose that $h$ is bounded by $\epsilon$ and prove that there is no further restriction on 
the step size. Our theorem states explicitly, how for 
the cases $\lambda < 1$ and $\lambda > 1$ in~\eqref{map_intro} attracting slow manifolds 
extend beyond the singularity at the origin. It gives estimates on the contraction rates 
of neighbourhoods of the manifolds. The case $\lambda=1$ corresponds with the problem of 
\emph{canard solutions}~\cite{BenoitCallotDienerDiener,DuRo96,ks2001/3} and will be dealt 
with in future work. It should be noted that, by letting $h \to 0$, our proof of 
Theorem~\ref{transcritical_discrete} can also be seen as a different way of 
proving \cite[Theorem 2.1]{ks2001/2} and our proof makes the results~\cite{ks2001/2} for 
the scaling chart more explicit. Additionally, the blow-up method provides the insight that only in one chart around the singularity the preservation of stability behaviour is bound to the stability criteria of the Euler discretisation derived from the Dahlquist test equation while in the other charts there is no such restriction.

This work lays the foundation of a broader effort to apply the blow-up method, which has 
so far mainly been used for flows, to fast-slow dynamical systems induced by maps. First, 
it is insightful to look at key examples that can be compared to continuous-time 
analogues, as in the case of the transcritical singularity. In the future, also 
multiscale discrete-time problems, which have no correspondence to fast-slow flows, will 
be considered.

The remainder of the paper is structured as follows. In Section~\ref{continuoustime}, 
we summarize the results of Krupa and Szmolyan for transcritical singularities in continous 
time~\cite{ks2001/2}. We state and explain their main result Theorem~\ref{transcritical_classic} 
in Section~\ref{contdynamics}, and we outline the main ingredients of the proof in 
Section~\ref{contblowup} thereby introducing the basic ideas of the blow-up technique. In 
Section~\ref{discretemain}, we discuss the problem in discrete time associated with~\eqref{map_intro}. 
Our main result is Theorem~\ref{transcritical_discrete}. The ingredients of the proof are developed 
in the following subsections. Section~\ref{sec:blow} introduces the blow-up method for the new 
discrete setting and, subsequently, the dynamics are analyzed in three different charts of the 
manifold that blows up the singularity, leading to the proof of Theorem~\ref{transcritical_discrete}. 
In Section~\ref{secK1}, we describe how trajectories enter a neighbourhood of the origin via 
the \emph{entrance chart} $K_1$ and leave this neighbourhood in the case of $\lambda < 1$. 
Section~\ref{secK2} builds the core of the proof: we analyze, how trajectories pass the origin 
depending on $\lambda$ in the \emph{scaling chart} $K_2$. In Section~\ref{secK3}, the \emph{exit} 
dynamics through chart $K_3$ are described for the case $\lambda >1$. Finally, in Section~\ref{secproof} 
we combine the findings of the previous sections to finish the proof of 
Theorem~\ref{transcritical_discrete}. We conclude with a short summary of our results and an outlook 
on future work in Section~\ref{summaryoutlook}.\medskip

\textbf{Acknowledgements:} CK and ME gratefully acknowledge support by the DFG via the SFB/TR109
Discretization in Geometry and Dynamics.

\section{Transcritical singularity in continuous time} 
\label{continuoustime}

We start with a brief review and notation for continuous-time fast-slow systems. 
Consider a system of singularly perturbed ordinary differential equations (ODEs) 
of the form
\begin{align} \label{slowequ}
\begin{array}{rcrcl}
\epsilon \frac{\rmd x}{\rmd \tau} &=& \epsilon \dot{x} &=& f(x,y,\epsilon)\,, \\
\frac{\rmd y}{\rmd \tau}&=&\dot{y} &=& g(x,y,\epsilon)\,, \quad \ x \in \mathbb{R}^m, 
\quad y \in \mathbb R^n, \quad 0 < \epsilon \ll 1\,,
\end{array}
\end{align}
where $f,g,$ are $C^k$-functions with $k \geq 3$. Since $\epsilon$ is a small parameter, 
the variables $x$ and $y$ are often called the \textit{fast} variable(s) and 
the \textit{slow} variable(s) respectively. The time variable 
in~\eqref{slowequ}, denoted by $\tau$, is termed the \textit{slow} time scale. The change of 
variables to the \textit{fast} time scale $t:= \tau / \epsilon$ transforms the system~\eqref{slowequ} 
into the ODEs
\begin{align} \label{fastequ}
\begin{array}{r@{\;\,=\;\,}r}
x' & f(x,y,\epsilon)\,, \\
y' & \epsilon g(x,y,\epsilon)\,.
\end{array}
\end{align}
Both equations correspond with a respective limiting problem for $\epsilon = 0$: 
the \textit{reduced problem} (or \textit{slow subsystem}) is given by
\begin{align} \label{redequ}
\begin{array}{r@{\;\,=\;\,}l}
0 & f(x,y,0)\,, \\
\dot{y} & g(x,y,0)\,, 
\end{array}
\end{align}
and the \textit{layer problem} (or \textit{fast subsystem}) is 
\begin{align} \label{layerequ}
\begin{array}{r@{\;\,=\;\,}l}
x' & f(x,y,0)\,, \\
y' & 0\,.
\end{array}
\end{align}
We can understand the reduced problem~\eqref{redequ} as a dynamical system on 
the \textit{critical manifold} 
$$S_0= \{(x,y) \in \mathbb{R}^{n+m} \,:\, f(x,y,0) = 0 \}\,.$$
Observe that the manifold $S_0$ consists of equilibria of the layer 
problem~\eqref{layerequ}. $S_0$ is called \textit{normally hyperbolic} if 
the matrix $\textnormal{D}_xf(p)\in\mathbb{R}^{m\times m}$ for all $p\in S_0$ has no
spectrum on the imaginary axis. For a normally hyperbolic $S_0$ \textit{Fenichel 
Theory} \cite{Fenichel4,Jones,ku2015,WigginsIM} implies that for $\epsilon$ 
sufficiently small, there is a locally invariant slow manifold $S_{\epsilon}$ 
such that the restriction of~\eqref{slowequ} to $S_{\epsilon}$ is a regular
perturbation of the reduced problem~\eqref{redequ}. Furthermore, it follows from 
Fenichel's perturbation results that $S_{\epsilon}$ possesses an invariant 
stable and unstable foliation, where the dynamics behave as a small perturbation 
of the layer problem~\eqref{layerequ}.

A challenging phenomenon is the breakdown of normal hyperbolicity of $S_0$ such that 
Fenichel Theory cannot be applied. Typical examples of such a breakdown are found 
at bifurcation points $p\in S_0$, where the Jacobian $\rmD_x f(p)$ has at least one 
eigenvalue with zero real part. The simplest examples are folds or points of transversal 
self-intersection of $S$ in planar systems ($m=1=n$). In the following we focus on 
the \textit{transcritical} bifurcation in planar systems.

\subsection{Dynamics near the transcritical singularity} 
\label{contdynamics}

We briefly recall the main results for transcritical fast-slow singularities in the 
continuous-time setting from~\cite{ks2001/2}. Without loss of generality, i.e., 
up to translation of coordinates, we may just assume that the transcritical point 
coincides with the origin. Consider the system of planar ODEs on the fast time scale
\begin{align} 
\label{fastequtrans}
\begin{array}{r@{\;\,=\;\,}r}
x' & f(x,y,\epsilon)\,, \\
y' & \epsilon g(x,y,\epsilon)\,,
\end{array}
\end{align}
where the vector field $f$ satisfies the following conditions at the origin:
$$ f(0,0,0) = 0, \qquad \frac{\partial }{\partial x}f (0,0,0) = 0, \qquad 
\frac{\partial} {\partial y}f (0,0,0) = 0$$
These conditions imply that the critical manifold $S:=S_0= \{ (x,y)\in\mathbb{R}^2:f(x,y,0) = 0 \}$ 
has a self-intersection at the origin. The condition
$$ \det H_f((0,0,0)) < 0\,,$$
where $H_f$ denotes the Hessian matrix of $f$ in $x,y$, implies that this 
intersection is non-degenerate. Moreover, we require 
$$\frac{\partial^2 f}{\partial^2 x} (0,0,0) \neq 0$$ 
to guarantee that $S$ is transverse to the critical fibre $\{(x,0):x \in \mathbb{R} \}$. 
Furthermore, we assume that 
$$g(0,0,0) \neq 0\,$$
to ensure transversal slow dynamics at the origin.
The transcritical bifurcation at the origin, induced by such a system, 
can be brought~\cite{ks2001/2} to the normal form
\begin{align} \label{normalform}
\begin{array}{r@{\;\,=\;\,}l}
x' & x^2 - y^2 + \lambda \epsilon + \mathcal{R}_1(x,y,\epsilon)\,, \\
y' & \epsilon(1 + \mathcal{R}_2(x,y,\epsilon))\,,
\end{array}
\end{align}
where $\lambda > 0$ is a constant and 
\begin{equation*}
\mathcal{R}_1(x,y, \epsilon) = \mathcal{O}(x^3, x^2 y, xy^2, y^3, \epsilon x, 
\epsilon y, \epsilon^2)  \,, \quad \mathcal{R}_2 (x,y, \epsilon) 
= \mathcal{O}(x,y, \epsilon) \,.
\end{equation*}
The critical manifold $S$ is the union of four branches. We denote them by 
$S_\txta^+, S_\txta^-, S_\txtr^+, S_\txtr^-$ where $\txta$ means attracting and 
$\txtr$ repelling with respect to the fast variables and $+$ and $-$ correspond 
to the sign of the $y$-variable, see also Figure~\ref{fig:1}. We denote the 
corresponding slow manifolds for small $\epsilon > 0$ 
by $S_{\txta, \epsilon}^+, S_{\txta, \epsilon}^-, S_{\txtr, \epsilon}^+, 
S_{\txtr, \epsilon}^-$. We focus on the fate of $S_{\txta, \epsilon}^-$, when 
it is continued through a neighbourhood of $(0,0)$. For that purpose, we fix $\rho > 0$ 
and let $J$ be a small open interval around $0$ in $\mathbb{R}$, potentially depending 
on $\epsilon$. Then one can define
$$ \Delta^{\textnormal{in}} := \{(- \rho, y), \, y + \rho \in J \}, \  
\Delta_\txta^{\textnormal{out}} = \{(- \rho, y), 
\, y - \rho \in J \}, \ \Delta_\txte^{\textnormal{out}} = \{( \rho, y), \, y  \in J \}\,.$$
If $\Pi_\txta$ and $\Pi_\txte$ denote the transition maps from $\Delta^{\textnormal{in}}$ to 
$\Delta_\txta^{\textnormal{out}}$ and $\Delta_\txte^{\textnormal{out}}$ respectively, 
we can formulate the main result on the transcritical singularity \cite[Theorem 2.1]{ks2001/2}.

\begin{theorem} 
\label{transcritical_classic}
Fix $\lambda \neq 1$. There exists $\epsilon_0 > 0$ such that the following 
assertions hold for $\epsilon \in [0, \epsilon_0)$.
\begin{enumerate}
\item[(T1)] If $ \lambda > 1$, then the manifold $S_{\txta, \epsilon}^-$ passes 
through $\Delta_\txte^{\textnormal{out}}$ at a point $(\rho, h(\epsilon))$ 
where $h(\epsilon) = \mathcal{O}(\sqrt{\epsilon})$. The section $\Delta^{\textnormal{in}}$ 
is mapped by $\Pi_\txte$ to an interval containing $S_{\txta, \epsilon}^{-} \cap 
\Delta_\txte^{\textnormal{out}}$ of size $\mathcal{O}( \txte^{-C/\epsilon})$, where 
$C$ is a positive constant.
\item[(T2)] If $ \lambda < 1$, then the section $\Delta^{\textnormal{in}}$ (including the 
point $\Delta^{\textnormal{in}} \cap S_{\txta, \epsilon}^-$) is mapped by $\Pi_\txta$ 
to an interval about $S_{\txta, \epsilon}^{+}$ of size 
$\mathcal{O}(\txte^{-C/\epsilon})$, where $C$ is a positive constant.
\end{enumerate}
\end{theorem}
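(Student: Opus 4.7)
The plan is to apply the blow-up method of Krupa and Szmolyan. I would introduce the quasi-homogeneous blow-up
\[
\Phi : S^2 \times [0, r_0] \to \mathbb{R}^3, \quad (\bar{x}, \bar{y}, \bar{\epsilon}; \bar{r}) \mapsto (\bar{r}\bar{x}, \bar{r}\bar{y}, \bar{r}^2 \bar{\epsilon}),
\]
so that the singular point at the origin is replaced by a two-sphere. After pulling back the extended vector field (adjoined with $\epsilon' = 0$) and dividing out a trivial factor of $\bar{r}$, the desingularized vector field becomes partially hyperbolic on the blown-up locus. I would then cover the relevant part of the sphere with three charts: the entrance chart $K_1$ ($\bar{y} = -1$), the scaling chart $K_2$ ($\bar{\epsilon} = 1$), and the exit chart $K_3$ ($\bar{y} = +1$), and analyze each in turn.

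In the entrance chart $K_1$, with coordinates $(x_1, r_1, \epsilon_1)$ defined by $x = r_1 x_1$, $y = -r_1$, $\epsilon = r_1^2 \epsilon_1$, the attracting branch $S_\txta^-$ lifts to a normally hyperbolic curve of equilibria of the desingularized layer system at $\epsilon_1 = 0$. Standard Fenichel theory then produces a locally invariant extension $S_{\txta, \epsilon_1}^-$ up to the equator $r_1 = 0$ of the sphere, depending smoothly on $\epsilon_1 \geq 0$. An analogous construction in the exit chart $K_3$, using coordinates adapted to $\bar{y} = +1$, handles the departure of trajectories in the case $\lambda > 1$.

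The heart of the proof is the scaling chart $K_2$, with $x = \sqrt{\epsilon}\, x_2$, $y = \sqrt{\epsilon}\, y_2$. To leading order the dynamics reduce to the Riccati-type system
\[
\dot x_2 = x_2^2 - y_2^2 + \lambda, \qquad \dot y_2 = 1,
\]
independent of $\epsilon$. I would construct the invariant sets on the equator of the sphere corresponding to the asymptotic branches $x_2 \sim \pm y_2$, and then identify the unique distinguished trajectory $\gamma_\lambda$ that matches the continuation of $S_\txta^-$ coming from $K_1$. A careful analysis of this Riccati system (using explicit solutions via parabolic cylinder functions, or direct comparison arguments) shows that for $\lambda > 1$ the trajectory $\gamma_\lambda$ crosses to $x_2 \to +\infty$ in finite $y_2$, producing (T1), whereas for $\lambda < 1$ it remains bounded and asymptotes to $x_2 \sim -y_2$, producing the connection to $S_{\txta, \epsilon}^+$ in (T2); the threshold $\lambda = 1$ is exactly the canard case.

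The exponential smallness estimates then follow by integrating the linear variational equation along the attracting branches: outside an $\mathcal{O}(\sqrt{\epsilon})$ neighbourhood of the origin the fast variable is contracted at rate $\mathcal{O}(1/\epsilon)$ on the slow time scale, while the sojourn time in $K_2$ contributes only algebraic corrections. Composing the contractions across $K_1$ and $K_3$ with the bounded expansion in $K_2$ yields the $\mathcal{O}(\txte^{-C/\epsilon})$ width of the image sections. The principal obstacle is the $K_2$ analysis: one must construct the distinguished solution $\gamma_\lambda$ robustly, establish its uniqueness up to the exponentially small wedge, and then perform the matching between charts while controlling the higher-order remainder terms $\mathcal{R}_1$ and $\mathcal{R}_2$ from the normal form.
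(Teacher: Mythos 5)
Your overall strategy coincides with the paper's (which follows Krupa and Szmolyan): blow up the extended system $(x,y,\epsilon)$ at the origin via $x=\bar r\bar x$, $y=\bar r\bar y$, $\epsilon=\bar r^2\bar\epsilon$, desingularize, and pass through an entrance chart, the scaling chart $\bar\epsilon=1$, and an exit chart, with the Riccati equation $x_2'=x_2^2-y_2^2+\lambda$, $y_2'=1$ deciding between the cases. Your remark about parabolic cylinder functions is legitimate: the substitution $x_2=-u'/u$ turns the Riccati equation into the Weber equation $u''=(y_2^2-\lambda)u$, with $\lambda=1$ degenerating to the canard solution $x_2=y_2$; the source paper instead uses a rotation/monotonicity argument in $\lambda$, but both routes identify the same distinguished trajectory. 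Your description of the qualitative dichotomy in $K_2$ ($x_2\to+\infty$ at finite $y_2$ for $\lambda>1$; $x_2\sim-y_2$ for $\lambda<1$) and of the $\mathcal{O}(1/\epsilon)$ passage times producing the $\mathcal{O}(\txte^{-C/\epsilon})$ contraction is consistent with the paper's argument (with the minor caveat that the extension of $S_\txta^-$ to the equator is a centre-manifold statement at the partially hyperbolic equilibria, not plain Fenichel theory).

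However, there is a genuine gap in your chart architecture: you place the entrance and exit charts at $\bar y=-1$ and $\bar y=+1$, whereas they must sit at $\bar x=-1$ and $\bar x=+1$. For $\lambda>1$ the distinguished trajectory leaves the scaling regime with $x_2\to+\infty$ at finite $y_2$, i.e.\ it exits the blown-up sphere near $\overline q^{\textnormal{out}}=(\bar x,\bar y,\bar\epsilon)=(1,0,0)$ and crosses $\Delta_\txte^{\textnormal{out}}=\{x=\rho\}$ at $y=\mathcal{O}(\sqrt\epsilon)$. The chart $\{\bar y=+1\}$ contains no neighbourhood of $\overline q^{\textnormal{out}}$ (in its coordinates that point lies at infinity), so your $K_3$ cannot describe this passage at all, and (T1) is precisely the case your setup cannot complete. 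Relatedly, your deduction of $h(\epsilon)=\mathcal{O}(\sqrt\epsilon)$ from escape at finite $y_2$ only yields $y=\mathcal{O}(\sqrt\epsilon)$ while $x=\mathcal{O}(\sqrt\epsilon)$, i.e.\ at the edge of the scaling regime, not at $x=\rho$: propagating the estimate to $\Delta_\txte^{\textnormal{out}}$, and obtaining the exponentially small interval there, requires tracking trajectories past the hyperbolic saddle $\overline q^{\textnormal{out}}$ in the chart $\bar x=1$, where the linearization has eigenvalues $1,-1,-2$ and the resonance $-1=1+(-2)$ makes the analysis delicate — the paper explicitly flags this point, referring to Krupa and Szmolyan's resonance analysis for the fold. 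Case (T2) would survive your chart choice, since the exit branch $S_\txta^+$ (direction $(-1,1)$ on the sphere) is visible from $\bar y=+1$, though your sections $\Delta^{\textnormal{in}},\Delta_\txta^{\textnormal{out}}\subset\{x=\mp\rho\}$ are then not coordinate-aligned; the paper handles both the entry along $S_\txta^-$ and the $\lambda<1$ exit along $S_\txta^+$ in the single chart $\bar x=-1$, whose equilibria $p_{\txta,1}^{\pm}=(0,\pm1,0)$ carry the centre manifolds $M_{\txta,1}^{\pm}$ extending the slow manifolds.
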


\begin{figure}[htbp]
        \centering
        \begin{subfigure}{.5\textwidth}
        \centering
  		\begin{overpic}[width=.9\textwidth]{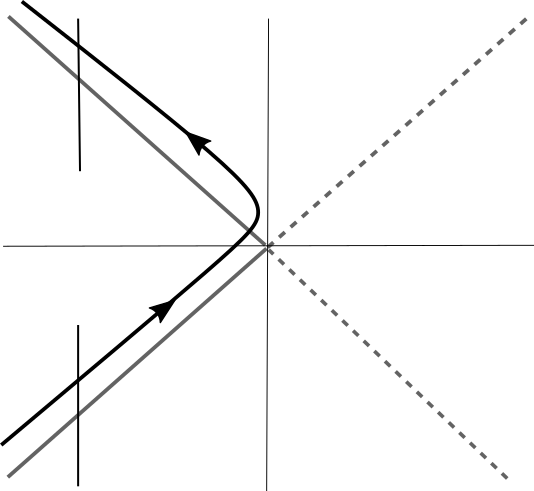}            
                \put(10,32.5){$\Delta^{\textnormal{in}}$}
                \put(11,91){$\Delta_\txta^{\textnormal{out}}$}
                \put(3,20){$S_{\txta, \epsilon}^{-}$}
								\put(5,1.5){$S_\txta^-$}
								\put(5,74){$S_\txta^+$}
								\put(90,9){$S_\txtr^-$}
								\put(90,75){$S_\txtr^+$}
								\put(95,47.5){$x$} 
								\put(53,90){$y$}
        \end{overpic}
        \caption{$\lambda < 1$}
        \label{cont_lambdasmall}
		\end{subfigure}%
        \begin{subfigure}{.5\textwidth}
        \centering
  		\begin{overpic}[width=.9\textwidth]{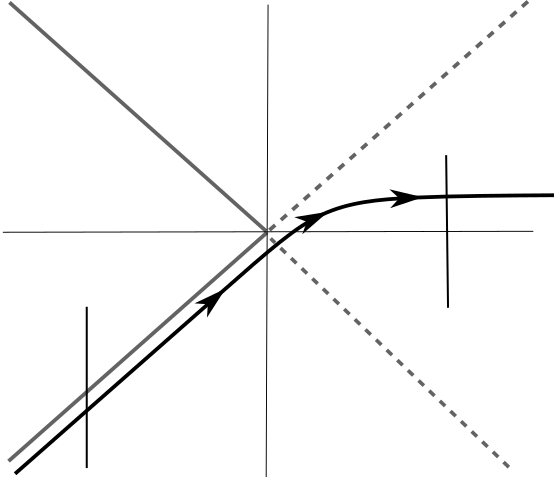}   
  				      \put(14,32){$\Delta^{\textnormal{in}}$}
                \put(78,59){$\Delta_\txte^{\textnormal{out}}$}
                \put(4,-2){$S_{\txta, \epsilon}^{-}$}
								\put(95,41){$x$} 
								\put(52,86){$y$} 
        \end{overpic}
        \caption{$\lambda > 1$}
        \label{cont_lambdalarge}
		\end{subfigure}
		\caption{Extension of the slow manifold $S_{\txta, \epsilon}^-$ (black curve) for 
		the system~\eqref{normalform} around the non-hyperbolic singularity at 
		the origin for $\lambda < 1$ and $\lambda > 1$. The four parts of the 
		critical manifold are indicated as well (grey lines), where attracting parts are
		solid lines and repelling ones dashed lines. The two sketches illustrate 
		the different cases in Theorem~\ref{transcritical_classic}.}
        \label{fig:1}
\end{figure}

In the following, we are going to sketch the proof of Theorem~\ref{transcritical_classic} 
to illustrate the setting for the continuous-time case~\cite{ks2001/2} and to be able to 
have this case available for comparison. To start, we consider 
$\epsilon$ as variable, and write the problem in three variables
\begin{align} \label{threevariables}
x' &= x^2 - y^2 + \lambda \epsilon + \mathcal{R}_1(x,y,\epsilon)\,, \nonumber \\
y' &= \epsilon (1+ \mathcal{R}_2(x,y,\epsilon))\,, \\
\epsilon' &= 0\,\nonumber.
\end{align}
The total derivative of the above vector field $X$ in $(x,y,\epsilon)$ has only 
zero eigenvalues at the origin $(x,y,\epsilon)=(0,0,0)$. In particular, the 
origin is a non-hyperbolic equilibrium. To gain hyperbolicity at the singularity one can
use the \textit{blow-up method}, which maps the equilibrium point to an entire 
manifold, on which the dynamics can be desingularized. 
The shortest proof of Theorem~\ref{transcritical_classic} uses the quasi-homogeneous 
\textit{blow-up transformation} 
$$ x = r \overline x, \quad y = r \overline y, \quad \epsilon = 
r^2 \overline \epsilon, $$
where $(\overline{x}, \overline{y}, \overline{\epsilon}, r) \in B := S^2 \times [0, r_0]$ 
for some $r_0 > 0$. The transformation can be formalized as a map $\Phi: B \to \mathbb R^3$, 
where $r_0$ is small enough such that the dynamics on $\Phi(B)$ can be described by the normal 
form approximation. The map $\Phi$ induces a vector field $\overline{X}$ on $B$ by 
$\Phi_{*}(\overline{X}) = X$, where $\Phi_{*}$ is the pushforward induced by $\Phi$. Note 
that, since $\Phi(B)$ is a full neighbourhood of the origin, it suffices to study the vector 
field $\overline{X}$. The associated dynamics on $B$ are best analysed in three charts $K_i$, 
$i = 1,2,3$, an entrance ($\overline x = -1$), a scaling ($\overline \epsilon = 1$) and an exit 
($ \overline x = 1$) chart, given by
\begin{align}
K_1 : \quad x &= - r_1, \quad y = r_1 y_1, \quad \epsilon = r_1^2 \epsilon_1, \label{K1} \\ 
K_2 : \quad x &= r_2 x_2, \quad y = r_2 y_2, \quad \epsilon = r_2^2 , \label{K2} \\
K_3 : \quad x &= r_3, \quad y = r_3 y_3, \quad \epsilon = r_3^2 \epsilon_3. \label{K3}
\end{align}
The changes of coordinates between the charts look as follows:
\noindent $k_{12}: K_1 \to K_2$ is given by
\begin{equation} \label{kappa12}
x_2 = - \epsilon_1^{-1/2}, \quad y_2 =\epsilon_1^{-1/2} y_1, \quad 
r_2 = \epsilon_1^{1/2} r_1, 
\end{equation}
\noindent $k_{21}: K_2 \to K_1$ is given by
\begin{equation} \label{kappa21}
\epsilon_1 = x_2^{-2}, \quad  y_1 = -x_2^{-1} y_2, \quad r_1 = - x_2 r_2, 
\end{equation}
\noindent $k_{32}: K_3 \to K_2$ is given by
\begin{equation} \label{kappa32}
x_2 = \epsilon_3^{-1/2}, \quad y_2 =\epsilon_3^{-1/2} y_3, \quad r_2 = \epsilon_3^{1/2} r_3, 
\end{equation}
\noindent and $k_{23}: K_2 \to K_3$ is given by
\begin{equation} \label{kappa23}
\epsilon_3 = x_2^{-2}, \quad  y_3 = x_2^{-1} y_2, \quad r_3 =  x_2 r_2.
\end{equation}

\subsection{Describing the dynamics via the blow-up method} 
\label{contblowup}
The dynamics in $K_1$ and $K_3$ can be desingularised. Indeed, the origin is
mapped to a sphere $S^2\times \{r=0\}$, and dividing the 
three vector fields by $r_i$ for $i =1,3$ and using a time change yield:
\begin{align} 
\label{K1_dynamics}
r_1' &= - r_1 F_1(r_1, y_1, \epsilon_1), \nonumber \\
y_1' &= \epsilon_1 + y_1 F_1(r_1, y_1, \epsilon_1) + \mathcal{O}(r_1), \nonumber \\
\epsilon_1' &= 2 \epsilon_1 F_1(r_1, y_1, \epsilon_1),
\end{align}
where $F_1(y_1, \epsilon_1) = 1- y_1^2 + \lambda \epsilon_1 + \mathcal{O}(r_1)$, and
\begin{align} \label{K3_dynamics}
r_3' &=  r_3 F_3(r_3, y_3, \epsilon_3), \nonumber \\
y_3' &= \epsilon_3 - y_3 F_3(r_3, y_3, \epsilon_3) + \mathcal{O}(r_3), \nonumber \\
\epsilon_3' &= -2 \epsilon_3 F_3(r_3, y_3, \epsilon_3),
\end{align}
where $F_3(y_1, \epsilon_1) = 1- y_3^2 + \lambda \epsilon_3 + \mathcal{O}(r_3)$. 
The $\mathcal{O}(r_i)$-terms are of higher order, derived from $\mathcal{R}_1$ and $\mathcal{R}_2$ in the 
original equation~\eqref{normalform}. There are six equilibria on the invariant circle 
$\{ r = \overline \epsilon = 0\}$; see also Figure~\ref{fig:2}. We denote by
\begin{equation}
p_{\txta,1}^- = (0, -1,0), \quad q_1^{\textnormal{in}} = (0,0,0), \quad p_{\txta,1}^+ = (0,1,0)
\end{equation}
the resulting equilibria in $K_1$. The points $p_{\txta,1}^+$ and $p_{\txta,1}^-$ have a 
one-dimensional stable eigenspace along the $y_1$-direction with eigenvalue $-2$ and a 
two-dimensional centre eigenspace. The point $q_1^{\textnormal{in}}$ is a saddle with eigenvalues 
$-1,1,2$. Similarly, we denote by
\begin{equation}
p_{\txtr,3}^- = (0, -1,0), \quad q_3^{\textnormal{out}} = (0,0,0), \quad p_{r,3}^+ = (0,1,0)
\end{equation}
the equilibria in $K_3$. The points $p_{\txtr,3}^+$ and $p_{\txtr,3}^-$ have a one-dimensional 
unstable eigenspace along the $y_3$-direction with eigenvalue $2$ and a two-dimensional centre 
eigenspace. The 
point $q_1^{\textnormal{out}}$ is a saddle with eigenvalues $1,-1,-2$. Note that we have two 
hyperbolic equilibria  and four partially hyperbolic equilibria on $\{ r = \overline \epsilon = 0\}$ 
as opposed to complete lack of hyperbolicity in the original problem, see Figure~\ref{fig:2}. 

The centre manifolds $M_{\txta,1}^{\pm}$ of $p_{\txta,1}^{\pm}$ in chart $K_1$ and $M_{\txtr,3}^{\pm}$ 
of $p_{r,3}^{\pm}$ in chart $K_3$ are the unique extensions of slow manifolds $S_{\txta,1}^{\pm}$ 
and $S_{\txtr,3}^{\pm}$, which correspond to $S_{\txta,\epsilon}^{\pm}$ 
and $S_{\txtr,\epsilon}^{\pm}$ in the original coordinates. Furthermore, $ M_{\txta,1}^{\pm}$ 
and $ M_{\txtr,3}^{\pm}$ correspond to locally invariant manifolds $\overline M_{\txta}^{\pm}$ 
and $\overline M_{\txtr}^{\pm}$ of the blown-up vector field $\overline{X}$ at the equilibria 
$\overline p_{\txta}^{\pm}$ and $\overline p_{\txtr}^{\pm}$. The idea is to track $M_{\txta}^{-}$ 
as it moves across the sphere $S^2$ guided by the dynamics in chart $K_2$.
For that purpose, it is helpful to introduce the centre manifolds $ N_{\txta,1}^{\pm} = M_{\txta,1}^{\pm} 
\cap \{r_1 = 0\}$ and $ N_{\txtr,3}^{\pm} = M_{\txtr,3}^{\pm} \cap \{r_3 = 0\}$ and their global 
counterparts $N_{\txta}^{\pm}$ and $N_{\txtr}^{\pm}$. In that way, the dynamics can be analyzed first for 
$r=0$, i.e.~at the origin now blown up to a sphere where partial hyperbolicity is gained, and then for 
small $r > 0$ in order to connect the dynamics around the origin.

\begin{figure}[htbp]
        \centering
        \begin{subfigure}{.5\textwidth}
        \centering
  		\begin{overpic}[width=.9\textwidth]{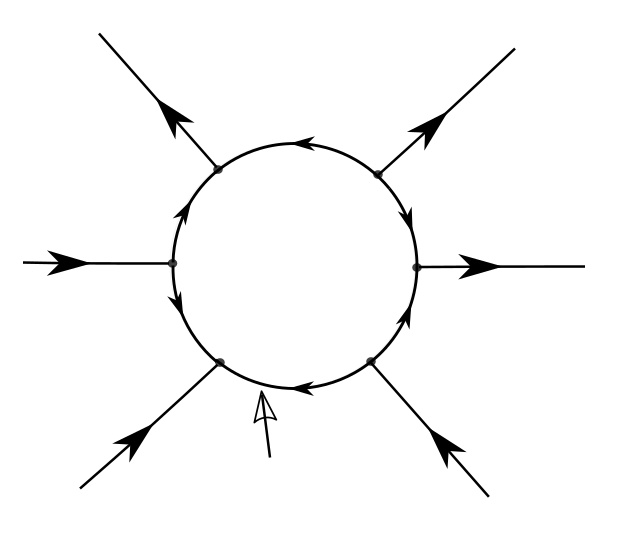}   
  		\put(32,80){\small $S^1 \times \{0\} \times [0,r_0]$}         
                \put(34,64){\scriptsize $\overline p_{\txta}^{+}$}
                \put(34,23){\scriptsize $\overline p_{\txta}^{-}$}
                \put(58,23){\scriptsize $\overline p_{\txtr}^{-}$}
                \put(59,64){\scriptsize $\overline p_{\txtr}^{+}$}
                \put(23,47){\scriptsize $\overline q^{\textnormal{in}}$}
                \put(69,47){\scriptsize $\overline q^{\textnormal{out}}$}
                \put(34,9){\small$S^1 \times \{0\} \times \{0\}$}
        \end{overpic}
        \caption{Blown-up vector field $\overline{X}$}
        \label{blownup_vf}
		\end{subfigure}%
        \begin{subfigure}{.5\textwidth}
        \centering
  		\begin{overpic}[width=.9\textwidth]{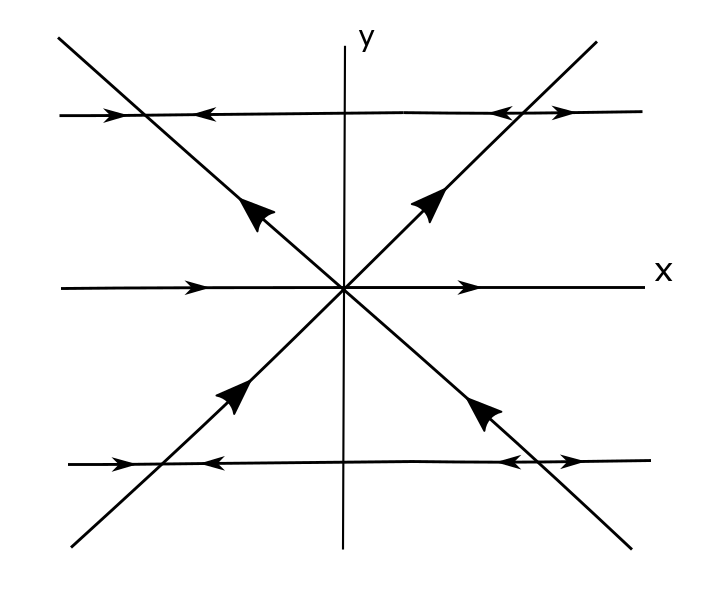}   
  				\put(84,80){\small $S_{\txtr}^{+}$}
                \put(89,4){\small $S_{\txtr}^{-}$} 
                \put(10,80){\small $S_{\txta}^{+}$}
                \put(11,4){\small $S_{\txta}^{-}$}         
        \end{overpic}
        \caption{Original vector field $X$}
        \label{original_vf}
		\end{subfigure}
		\caption{Blow-up at $(0,0,0)$ of equation~\eqref{threevariables}. Figure 2 (b) shows a sketch of the original vector field $X$ for $\epsilon =0$ and the four different branches of the critical manifold $S_{\txta}^{-},S_{\txtr}^{-}, S_{\txta}^{+}, S_{\txtr}^{+}$. The arrows on the critical manifold indicate the direction of the reduced flow whereas the arrows on the $y$-fibres illustrate the layer flow. Figure 2 (a) sketches the blown-up vector field $\overline{X}$ for $\epsilon=0$, i.e. on the submanifold $S^1 \times \{0\} \times [0,r_0]$. The figure illustrates the partially hyperbolic dynamics around the six equilibrium points on the invariant circle $S^1 \times \{0\} \times \{0\}$ which corresponds with the singularity at the origin in Figure 2 (b).}
        \label{fig:2}
\end{figure}

The crucial dynamics happen in the scaling chart $K_2$. The desingularized equations have the form
\begin{align} \label{K2_dyanmics}
x_2' &= x_2^2 - y_2^2 + \lambda + \mathcal{O}(r_2)\, \nonumber \\
y_2' &= 1 + \mathcal{O}(r_2)\,\,, \nonumber \\
r_2' &= 0\,.
\end{align}
Firstly, taking $r_2 = 0$, one can develop the typical behaviour of trajectories around the 
canard case $\lambda =1$. In this case, there is the solution $x_2 = y_2 = t$ denoted by 
$\gamma_2(t)$. We observe that
$$ \lim_{t \to \infty} \kappa_{23}(\gamma_2(t)) = p_{\txtr,3}^+\,, \quad \lim_{t \to -\infty} 
\kappa_{21}(\gamma_2(t)) = p_{\txta,1}^-\,. $$
Hence, for $\lambda =1 $, there is a connection between the equilibrium points $\overline p_\txta^-$ 
and $\overline p_\txtr^+$ by a trajectory $ \overline \gamma$ which equals $\gamma_2$ in $K_2$. 
Krupa and Szmolyan \cite[Lemma~2.2]{ks2001/2} argue with the rotational property of \eqref{K2_dyanmics} 
to observe that a connection from $\overline p_\txta^-$ to $\overline p_\txtr^+$ only exists for 
$\lambda =1$. They conclude \cite[Proposition 2.1]{ks2001/2} that for $\lambda < 1$ there is a unique 
trajectory connecting $\overline p_\txta^-$ to $\overline p_\txta^+$ corresponding to $\overline N_\txta^-$, 
and for $\lambda > 1$ there is a unique trajectory connecting $\overline p_\txta^-$ to $\overline 
q^{\textnormal{out}}$ corresponding to $\overline N_\txta^-$; see Figure~\ref{fig:2}. 

Moreover, one can argue that for $\lambda \neq 1$ the $\mathcal{O}(r_2)$ terms do not change 
the qualitative picture: if $\lambda < 1$, it follows from the previous considerations and 
perturbation theory that $\overline M_\txta^-$ reaches the vicinity of $\overline M_\txta^+$ 
and that both centre manifolds are exponentially attracting. Analyzing the dynamics 
of~\eqref{K1_dynamics} on $\overline M_\txta^{\pm}$ shows that the passage times near the 
equilibria $\overline p_\txta^{\pm}$ are of order $\mathcal{O}(1/\epsilon)$. Hence, one can 
conclude that the relevant contraction rate is of order $\mathcal{O}(\txte^{-C/\epsilon})$ for 
some $C > 0$, as stated in Theorem~\ref{transcritical_classic}.

If $\lambda > 1$, it follows from the previous considerations and perturbation theory that 
$\overline M_\txta^-$ passes near $\overline q^{\textnormal{out}}$. Contraction towards 
$\overline M_\txta^-$ works as for $\lambda < 1$. The fact that $h(\epsilon) = 
\mathcal{O}(\sqrt{\epsilon})$ has to be worked out from the asymptotics of $\overline N_\txta^-$ 
in chart $K_2$ and an analysis of the linearization of $\overline{X}$ at 
$\overline q^{\textnormal{out}}$ in chart $K_3$. The latter is delicate due to the resonance of 
the eigenvalues $-1,1$ and $2$. Krupa and Szmolyan discuss this problem in detail for the fold 
singularity \cite[Section 2.6]{ks2011} but not for the transcritical problem and claim that the 
statement follows analogously. We are going to give a detailed argument for the discrete-time 
problem below.

\section{Transcritical singularity in discrete time} 
\label{discretemain}

We can now turn to the main part, i.e., we want to analyze the discrete-time problem obtained via 
an explicit Euler method. For that purpose, we first 
set the higher order terms in~\eqref{threevariables}, represented by $\mathcal{R}_1$ and $\mathcal{R}_2$, to zero. 
We introduce the step size $h > 0$ of the Euler method as an additional variable and obtain a 
map $P: \mathbb{R}^4 \to \mathbb{R}^4$, whose iterations $P^n(z_0)$, for $n \in \mathbb{N}$ and 
$z_0 \in \mathbb{R}^4$,  we are going to analyze close to the origin with $h, \epsilon > 0$:
\begin{equation} 
\label{map_transcritical}
P: \begin{pmatrix}
x \\ y \\ \epsilon \\ h
\end{pmatrix}
\mapsto \begin{pmatrix}
\tilde{x}  \\ \tilde y \\ \tilde \epsilon \\ \tilde h 
\end{pmatrix}
= \begin{pmatrix}
x + h(x^2 - y^2 + \lambda \epsilon) \\
y + \epsilon h \\
\epsilon \\
h
\end{pmatrix}.
\end{equation}
Also in this case there is a normally hyperbolic critical manifold 
$$S_0 = \{(x,y,\epsilon,h)\in\mathbb{R}^4:x^2 = y^2\} \setminus \{0\},$$ which 
splits into the attracting branches $S_\txta^-, S_\txta^+$ and repelling branches 
$S_\txtr^-, S_\txtr^+$. It follows from~\cite[Theorem 4.1]{HPS77} that for $\epsilon, 
h > 0$ small enough there are corresponding forward invariant slow manifolds 
$S_{\txta, \epsilon,h}^-, S_{\txta, \epsilon,h}^+$ and $S_{\txtr, \epsilon,h}^-, 
S_{\txtr, \epsilon,h}^+$. 
Note that $\rmD P(0)$ only has quadruple eigenvalue $1$, which means a complete loss of 
hyperbolicity at the origin, as in the ODE case.

Similarly to the problem in continous time, we fix some $\rho > 0$ and let 
$J$ be a small open interval around $0$ in $\mathbb{R}$. We define
$$ \Delta^{\textnormal{in}} = \{(- \rho, y), \, y + \rho \in J \}, \  
\Delta_\txta^{\textnormal{out}} = \{(- \rho, y), \, y - \rho \in J \}, 
\ \Delta_\txte^{\textnormal{out}} = \{( \rho, y), \, y  \in J \},$$
where $\varepsilon$ and $h$ are fixed as prescribed by the map $P$; see also 
Figure~\ref{fig:discrete1}. In contrast with flows, the intervals 
$\Delta_\txta^{\textnormal{out}}$, $\Delta_\txte^{\textnormal{out}}$ are not 
necessarily hit by $P^n(-\rho,y)$ for fixed $y \in \Delta^{\textnormal{in}}$ and 
some $n > 0$. Notice that we used an abbreviated notation $P^n(x,y,\varepsilon,h)=P^n(x,y)$ 
for the map $P$ and also for points in $\Delta^{\textnormal{in}}$ just denoting them 
by their $y$-coordinate. We define the transition maps from $\Delta^{\textnormal{in}}$ 
to the vicinity of $\Delta_\txta^{\textnormal{out}}$ and 
$\Delta_\txte^{\textnormal{out}}$ by
\begin{align} 
\label{maps}
\Pi_\txta(y) = P^{n^*(y)}(-\rho,y)\,, \text{ where } n^*(y)  &= 
\argmin_{n \in \mathbb{N}} \dist( P^{n}(-\rho,y), \Delta_\txta^{\textnormal{out}})\,, 
\ y \in \Delta^{\textnormal{in}}\,, \\
\Pi_\txte(y) = P^{m^*(y)}(-\rho,y)\,, \text{ where } m^*(y) &= \argmin_{n \in \mathbb{N}} 
\dist( P^{n}(-\rho,y), \Delta_\txte^{\textnormal{out}})\,, \ y \in \Delta^{\textnormal{in}}.
\end{align}
We can formulate the main result on the transcritical singularity in discrete time (see Figure~\ref{fig:discrete1} for an illustration):

\begin{theorem} 
\label{transcritical_discrete}
Fix $\lambda \neq 1$ and $\rho > 0$. There exists $\epsilon_0 > 0$ such that the 
following assertions hold for all $\epsilon \in [0, \epsilon_0]$, $h > 0$ such that 
$0 < h \rho^3 < \epsilon$ and any $0 < c < \rho h$.
\begin{enumerate}
\item[(T1)] If $ \lambda < 1$, then the section $\Delta^{\textnormal{in}}$ (including the 
point $\Delta^{\textnormal{in}} \cap S_{\txta, \epsilon, h}^-$) is mapped by $\Pi_\txta$ 
to a set about $S_{\txta, \epsilon, h}^{+}$ of $y$-width $\mathcal{O}\left( 
(1- c)^{\frac{C \rho }{h \epsilon}} \right)$, where $C$ is a positive constant, such that 
every point in $\Pi_\txta \left( \Delta^{\textnormal{in}} \right)$ is 
$\mathcal{O}(h \epsilon)$-close to $\Delta_\txta^{\textnormal{out}}$.
\item[(T2)] If $ \lambda > 1$, then the manifold $S_{\txta, \epsilon,h}^-$ passes through 
$\Delta_\txte^{\textnormal{out}}$ at a point $(\rho, k(\epsilon))$ where $k(\epsilon) = 
\mathcal{O}(\epsilon^{1/3})$. The section $\Delta^{\textnormal{in}}$ is mapped by $\Pi_\txte$ 
to a set about $S_{\txta, \epsilon,h}^{-}$ of $y$-width $\mathcal{O}\left( (1- c)^{\frac{C
 \rho}{ h \epsilon}}\right)$, where $C$ is a positive constant, such that every point in 
$\Pi_\txte \left( \Delta^{\textnormal{in}} \right)$ is $\mathcal{O}(h (\epsilon + 
\rho^2))$-close to $\Delta_\txte^{\textnormal{out}}$.
\end{enumerate}
\end{theorem}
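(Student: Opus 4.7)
The plan is to transport the continuous-time blow-up strategy of Section~\ref{contblowup} to the map $P$ from~\eqref{map_transcritical}, organising the argument around the three charts $K_1$, $K_2$, $K_3$ and the coordinate changes $k_{12}, k_{23}$ already recorded in~\eqref{kappa12}--\eqref{kappa23}. In each chart I would push $P$ forward under the quasi-homogeneous coordinates, study the resulting three-dimensional map with its fixed points on the invariant circle $\{r_i = 0 = \epsilon_i\}$, invoke the discrete invariant-manifold theory of~\cite{HPS77} in place of Fenichel's theorem to produce the discrete centre manifolds $M^\pm_{\txta,1}, M^\pm_{\txtr,3}$ and their $r_i = 0$ restrictions $N^\pm_{\txta,1}, N^\pm_{\txtr,3}$, and then compose the three chart transitions to track the image of $\Delta^{\textnormal{in}}$ through the singularity.

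First I would analyse the entrance chart $K_1$. A direct computation shows that in these coordinates the linearisation of $P$ at $p^-_{\txta,1}$ is the identity along the centre directions $r_1,\epsilon_1$ and contracts by $1 - 2 h r_1 + \mathcal{O}((hr_1)^2)$ along $y_1$. Since $r_1 \approx \rho$ on the entry section and the hypothesis $0 < h\rho^3 < \epsilon$ keeps $hr_1$ comfortably in $(0,1)$, a standard discrete graph-transform / Banach fixed-point argument produces the attracting manifolds and gives a per-step contraction bounded by $1-c$ for any $0 < c < \rho h$. The slow drift on the centre manifold is of order $h r_1 \epsilon_1$, so the number of iterates needed to traverse $K_1$ is $\mathcal{O}(\rho/(h\epsilon))$, and compounding the step-wise contraction over that many iterates produces the width $\mathcal{O}\bigl((1-c)^{C\rho/(h\epsilon)}\bigr)$ announced in both (T1) and (T2). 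In case (T1) I would then run the symmetric analysis at $p^+_{\txta,1}$ to handle the exit towards $\Delta^{\textnormal{out}}_\txta$; the $\mathcal{O}(h\epsilon)$ proximity to $\Delta^{\textnormal{out}}_\txta$ reflects the fact that one step of $P$ moves $y$ by exactly $h\epsilon$, so we generally overshoot the section by at most that much.

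The scaling chart $K_2$ is the heart of the argument. Under $x = r_2 x_2$, $y = r_2 y_2$, $\epsilon = r_2^2$ the map reads $\tilde x_2 = x_2 + h r_2(x_2^2 - y_2^2 + \lambda)$, $\tilde y_2 = y_2 + h r_2$, $\tilde r_2 = r_2$, which is literally the explicit Euler scheme for the continuous vector field~\eqref{K2_dyanmics} with effective step size $h r_2 = h\sqrt\epsilon$; the assumption $h\rho^3 < \epsilon$ is precisely what makes this step size $\ll 1$, and this is the only chart in which the Dahlquist-type stability bound of the Euler method is actually used. Setting $r_2 = 0$ first I would reprove the discrete analogue of~\cite[Lemma~2.2, Proposition~2.1]{ks2001/2}: a direct trajectory comparison with the local truncation error of size $(hr_2)^2$ carries the non-connection and monotonicity statements over to the map, so for $\lambda \neq 1$ the unique orbit asymptotic to $\overline p^-_\txta$ at $-\infty$ still lands at $\overline p^+_\txta$ when $\lambda < 1$ and at $\overline q^{\textnormal{out}}$ when $\lambda > 1$. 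A discrete Fenichel-type perturbation in $r_2$ then promotes the picture to $r_2 > 0$; the exit asymptotics of this orbit as $x_2 \to \infty$ feed, via $k_{23}$, into the $\mathcal{O}(\epsilon^{1/3})$ estimate of (T2) once the discrete overshoot at the section $\{x = \rho\}$ is accounted for.

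For the final leg through chart $K_3$ in the case $\lambda > 1$, the map pushed forward under $K_3$ has linearisation at $\overline q^{\textnormal{out}}$ with the resonant spectrum $1 + h r_3,\; 1 - h r_3,\; 1 - 2 h r_3 + \mathcal{O}((hr_3)^2)$ descending from the continuous eigenvalues $1, -1, -2$. I expect this resonance to be the main obstacle: as the authors themselves note, it is precisely the point where the continuous argument of Krupa--Szmolyan is not spelled out for the transcritical problem, and in the discrete setting one must run a formal normal-form reduction of $P$ near $\overline q^{\textnormal{out}}$, show that the resonant monomials produce at most logarithmic corrections over the $\mathcal{O}(\rho/(h\epsilon))$ iterates spent in $K_3$, and verify that these corrections are absorbed by the $\mathcal{O}(\epsilon^{1/3})$ and $\mathcal{O}(h(\epsilon + \rho^2))$ estimates in the statement. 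Once that is in place, composing the three chart transitions with $k_{12}, k_{23}$ and pulling back to original coordinates yields both the location of $S^-_{\txta,\epsilon,h} \cap \Delta^{\textnormal{out}}_\txte$ and the two contraction widths, completing (T1) and (T2).
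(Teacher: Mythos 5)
Your chart-$K_1$ outline is broadly consistent with the paper (per-step contraction $1-2h_1$ with $h_1 = hr_1$, transit time $\mathcal{O}(\rho/(h\epsilon))$, compounding to the $(1-c)^{C\rho/(h\epsilon)}$ width), though the paper works with an explicit solution of the discrete invariance equation (Proposition~\ref{Invariance_Prop}) rather than a graph transform. The genuine gap is in chart $K_2$, which you correctly identify as the heart of the matter but then treat with an argument that fails. Your plan is to set $r_2=0$, transfer the Krupa--Szmolyan connection results to the map by a trajectory comparison with local truncation error $\mathcal{O}((hr_2)^2)$, and then promote to $r_2>0$ by a discrete Fenichel-type perturbation. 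But with effective step $hr_2$, the map at $r_2=0$ is the \emph{identity} -- every point is a neutral fixed point -- so there is no nontrivial discrete dynamics at $r_2=0$ from which to perturb; this is precisely why the paper blows up the step size as $h=\bar h/\bar r$ and excludes $\bar h=0$ and $\bar r = 0$ from the blow-up domain, so that $h_2>0$ is a fixed positive constant in the chart. Moreover, the orbit-versus-flow comparison cannot be closed by truncation error alone: the passage through $K_2$ takes $N\sim\delta^{-1/2}/h_2$ iterates through a region where $|x_2|,|y_2|\sim\delta^{-1/2}$, so the Lipschitz constant of the vector field is of order $\delta^{-1/2}$ over a time span of order $\delta^{-1/2}$, and the standard global error bound carries a Gronwall factor $\txte^{L T}\sim \txte^{C/\delta}$ that swamps any power of $h_2<\delta^{3/2}$. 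The paper avoids this entirely by direct induction on the iterates of~\eqref{K2_discrete} -- Lemmas~\ref{lambda01} and~\ref{lambda1big} establish monotonicity and that trajectories enter and remain in the regions above/below the diagonals via elementary per-step inequalities -- and this direct analysis, not a transfer from the ODE, is the technical core of the whole proof.

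Two further points. In $K_3$ you flag the resonance as the main obstacle and leave a resonant normal-form reduction as an expectation; the paper shows this can be bypassed completely (Proposition~\ref{TranstionK3}): as long as $y_3$ retains a positive order in $\delta$, one has $F_3>0$ along the trajectory, so $\epsilon_3$ decreases, $r_3$ increases, and $\pi_y$ stays $\mathcal{O}(\delta^{1/3})$ -- a short positivity argument with no normal form. Relatedly, your sketch never produces a mechanism for the exponent $1/3$ in $k(\epsilon)=\mathcal{O}(\epsilon^{1/3})$: it originates in $K_2$ from the discrete third-order expansion in the proof of Proposition~\ref{propK2} (the sum $\sum_k k^2 h_2^3$ yielding $x_{2,m+n}\gtrsim \lambda y_{2,m+n}+\tfrac16(\lambda^2-1)y_{2,m+n}^3$ and hence $y_2=\mathcal{O}(\delta^{-1/6})$ on exit), which is strictly weaker than the continuous-time $\mathcal{O}(1)$ Riccati bound -- this is exactly why the map gives $\epsilon^{1/3}$ where the ODE gives $\sqrt{\epsilon}$, and asserting that corrections are ``absorbed by the $\mathcal{O}(\epsilon^{1/3})$ estimate'' is circular. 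Finally, your remark that $K_2$ is the only chart where the Dahlquist-type stability bound is used is inverted: in $K_2$ the map has no relevant fixed points and no step-size stability constraint, whereas it is $K_1$ where the eigenvalue $1-2h_1$ ties preservation of stability to the Euler/Dahlquist criterion, and in $K_3$ stability is independent of $h_3$.
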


\begin{figure}[htbp]
        \centering
        \begin{subfigure}{.5\textwidth}
        \centering
  		\begin{overpic}[width=1.0\textwidth]{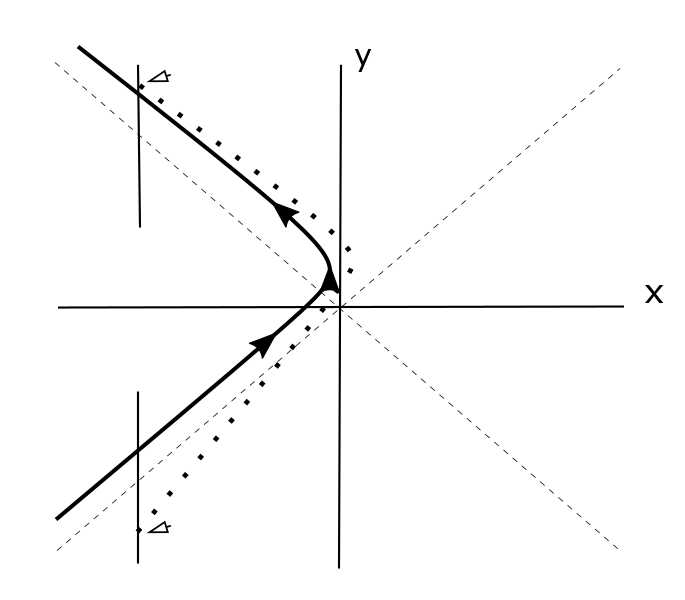}            
                \put(17,31){\small $\Delta^{\textnormal{in}}$}
                \put(17,82){\small $\Delta_\txta^{\textnormal{out}}$}
                \put(2,19){\small $S_{\txta, \epsilon,h}^{-}$}
                \put(26,10){\scriptsize $y$}
                \put(26,77){\scriptsize $\Pi_\txta(y)$}
        \end{overpic}
        \caption{$\lambda < 1$}
        \label{discrete_lambdasmall}
		\end{subfigure}%
        \begin{subfigure}{.5\textwidth}
        \centering
  		\begin{overpic}[width=1.0\textwidth]{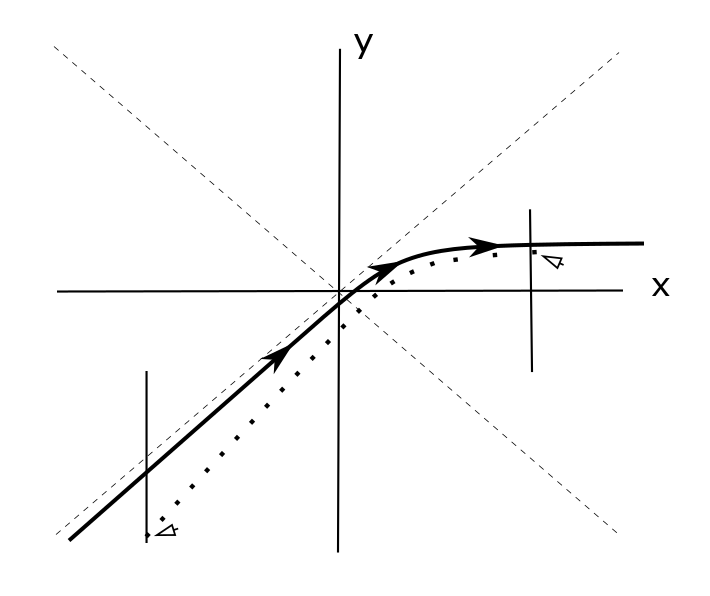}   
  				\put(18,33){\small $\Delta^{\textnormal{in}}$}
                \put(74,57){\small $\Delta_\txte^{\textnormal{out}}$}
                \put(10,3){ \small $S_{\txta, \epsilon,h}^{-}$}         
                \put(26,9){\scriptsize $y$}
                \put(80,45){\scriptsize $\Pi_\txte(y)$}
        \end{overpic}
        \caption{$\lambda > 1$}
        \label{discrete_lambdalarge}
		\end{subfigure}
		\caption{The sketches illustrate Theorem~\ref{transcritical_discrete}. For $\lambda < 1$ (Figure 3 (a))
		and $\lambda > 1$ (Figure 3 (b)), the figures show the extension of the slow manifold $S_{\txta, \epsilon,h}^-$ (black curve)
		for system~\eqref{map_transcritical} around the non-hyperbolic singularity at the origin 
		and the trajectory of a point $y \in \Delta^{\textnormal{in}}$ (dotted curve) to $\Pi_\txta(y)$ near $\Delta_\txta^{\textnormal{out}}$ and to
		$\Pi_\txte(y)$ near  $\Delta_\txte^{\textnormal{out}}$ respectively . The dashed lines show the branches of the critical manifold $S_0$.}
        \label{fig:discrete1}
\end{figure}

Note carefully that Theorem~\ref{transcritical_discrete} includes a precise 
requirement between three parameters, i.e., $0 < h \rho^3 < \epsilon$, which
means that the choice of step size for the Euler scheme is crucial. Since we only work in the normal form, the parameter $\rho$ does not have to be small and can, for example, be chosen equal to $1$ such that the requirement reads $0 < h < \epsilon$.  Our aim is to 
prove Theorem~\ref{transcritical_discrete} using the blow-up method for the problem 
in four variables and to track individual trajectories inside the slow manifolds.

\subsection{Blow-up transformation} 
\label{sec:blow}

We conduct the quasihomogeneous blow-up transformation
$$ x = \bar r \bar x, \quad y = \bar r \bar y, \quad \epsilon = 
\bar r^2 \bar \epsilon, \quad h = \bar h/\bar r\,, $$
where $(\bar x, \bar y, \bar \epsilon, \bar h, \bar r) \in B := S^2 \times 
(0, h_0] \times (0, \rho]$ for some $h_0, \rho > 0$. The change of variables in 
$h$ is chosen such that the map is desingularized in the relevant charts. We exclude 
$0$ from the domain of $\bar h$ since at $\bar h = 0$ every point is a neutral fixed 
point. Due to the transformation $h = \bar h/\bar r$ we have to exclude $0$ from the 
domain of $\bar r$ as well.

The whole transformation can be formalised as a map $\Phi: B \to \mathbb R^4$. 
The map $\Phi$ induces a map $\overline{P}$ on $B$ by $\Phi \circ \overline{P} 
\circ \Phi^{-1} = P$. Analogously to the continuous time case, we are using the 
charts $K_i$, $i=1,2,3$, to describe the dynamics. The chart $K_1$ focuses on 
the entry of trajectories for any value of lambda and the exit of trajectories 
for $\lambda < 1$, and is given by
\begin{equation} \label{K1d}
x = - r_1, \quad y = r_1 y_1, \quad \epsilon = r_1^2 \epsilon_1, \quad h = h_1/r_1\,.
\end{equation}
In the scaling chart $K_2$ the dynamics arbitrarily close to the origin are analyzed. 
It is given via the mapping
\begin{equation} \label{K2d}
x = r_2 x_2, \quad y = r_2 y_2, \quad \epsilon = r_2^2 , \quad h = h_2/r_2\,.
\end{equation}
The exit chart $K_3$ plays a role for the dynamics emerging from a neighbourhood of 
the origin for $\lambda > 1$ and is given by
\begin{equation} \label{K3d}
x = r_3, \quad y = r_3 y_3, \quad \epsilon = r_3^2 \epsilon_3, \quad h = h_3/r_3\,.
\end{equation}
There are four relevant changes of coordinates between the charts.
\noindent The map $k_{12}: K_1 \to K_2$ is given by
\begin{equation} \label{kappa12d}
x_2 = - \epsilon_1^{-1/2}, \quad y_2 =\epsilon_1^{-1/2} y_1, \quad 
r_2 = \epsilon_1^{1/2} r_1, \quad h_2 = \epsilon_1^{1/2} h_1\,,
\end{equation}
\noindent $k_{21}: K_2 \to K_1$ is given by
\begin{equation} \label{kappa21d}
\epsilon_1 = x_2^{-2}, \quad  y_1 = -x_2^{-1} y_2, \quad 
r_1 = - x_2 r_2, \quad h_1 = - x_2 h_2\,,
\end{equation}
\noindent $k_{32}: K_3 \to K_2$ is given by
\begin{equation} \label{kappa32d}
x_2 = \epsilon_3^{-1/2}, \quad y_2 =\epsilon_3^{-1/2} y_3, \quad 
r_2 = \epsilon_3^{1/2} r_3, \quad h_2 = \epsilon_3^{1/2} h_3\,,
\end{equation}
\noindent and $k_{23}: K_2 \to K_3$ is given by
\begin{equation} \label{kappa23d}
\epsilon_3 = x_2^{-2}, \quad  y_3 = x_2^{-1} y_2, \quad 
r_3 =  x_2 r_2, \quad h_3 =  x_2 h_2.
\end{equation}
\subsection{Dynamics in the chart $K_1$} \label{secK1}
We choose $\delta > 0$ small such that $ \left| \lambda \delta \right| \leq 1$, to 
be determined later in more detail which will also determine $\epsilon_0 = \rho^2 
\delta$. Furthermore, we assume $\nu := \rho h < \delta$ for fixed $h \in (0, h_0]$. 
We are interested in trajectories entering $B$ at $\bar r = \rho$ which is best 
analyzed in the entering chart $K_1$. At $\bar r = \rho$ we have $h_1 = \nu$.
We investigate the dynamics within the domain
$$D_1 := \{(r_1, y_1, \epsilon_1, h_1) \in \mathbb{R}^4 : r_1 \in 
[ 0, \rho], \epsilon_1 \in [0, 2 \delta], h_1 \in [ 0, \nu] \}\,.$$
Note that we have to bound $h_1$ from below since for $h_1 = 0$ everything is fixed 
and it is helpful to choose a uniform bound to get estimates on the contraction rates. 
A suitable choice is $h_1 \geq \nu/2$. The proportionality $h= h_1/r_1$ implies that 
$r_1 \geq \rho/2$. Furthermore, we want to see what happens for $\epsilon_1 = \delta$. 
Due to the invariant relation $\epsilon_1 r_1 h_1 = \epsilon h$, this implies taking $\epsilon_1 
\geq \delta/4$. These considerations lead to introducing the subdomain $\hat D_1 \subset 
D_1$ which is given as
$$ \hat D_1 := \{(r_1, y_1, \epsilon_1, h_1) \in \mathbb{R}^4 : r_1 \in [ \rho/2, \rho], 
\epsilon_1 \in [\delta/4, \delta], h_1 \in [ \nu/2, \nu] \}\,.$$
We will later restrict $y_1$ to obtain a small neighbourhood of $\Delta^{\textnormal{in}}$ 
as entering domain. To derive the blown-up map we calculate
\begin{align*}
\tilde r_1 &= - \tilde x = - x - h(x^2 - y^2 + \lambda \epsilon) \\
&= r_1 - \frac{h_1}{r_1}(r_1^2 - r_1^2 y_1^2 + \lambda r_1^2 \epsilon_1) \\
&= r_1(1 - h_1(1-y_1^2 + \lambda \epsilon_1))\,.
\end{align*}
Similarly, we can derive the maps for the other variables in chart $K_1$ leading to 
the following dynamics, desingularised by choosing $h = h_1/r_1$:
\begin{align} \label{K1dynamics}
\tilde{r}_1 &= r_1(1 - h_1F_1(y_1, \epsilon_1)), \nonumber \\
\tilde y_1 &= (y_1 + \epsilon_1 h_1)(1 - h_1F_1(y_1, \epsilon_1))^{-1},\nonumber \\
\tilde \epsilon_1 &=  \epsilon_1 (1 - h_1F_1(y_1, \epsilon_1))^{-2},\nonumber \\
\tilde{h}_1 &= h_1(1 - h_1F_1(y_1, \epsilon_1)),
\end{align}
where $F_1(y_1, \epsilon_1) = 1 - y_1^2 + \lambda \epsilon_1$. Now we have to analyze
the dynamics of~\eqref{K1dynamics} in detail. For any $h_1 \in [ 0, \nu]$ 
system~\eqref{K1dynamics} has the fixed points
$$ v_{\txta,1}^-(h_1) = (0,-1,0,h_1), \quad  v_{\txta,1}^+(h_1) = (0,1,0,h_1).$$
The points $v_{\txta,1}^-$ and $v_{\txta,1}^+$ have a three-dimensional centre eigenspace 
and a one-dimensional eigenspace spanned by $(0,1,0,0)^\top$ with the eigenvalue 
$\lambda_1 = 1-2h_1$, which is stable as long as $h_1 < 1$. Note that the set
$$ \{ w^{\textnormal{in}}(h_1) := (0,0,0,h_1) \, ; \, h_1 \in [ 0, \nu ] \}$$
is an invariant set for system~\eqref{K1dynamics} within $D_1$. The points 
$w^{\textnormal{in}}(h_1)$ have two stable and two unstable eigenvalues
$$\lambda_1 = 1 - 2h_1, \quad \lambda_2 = 1 - h_1 , \quad \lambda_3 =(1 - h_1)^{-1}, 
\quad \lambda_4 =(1 - h_1)^{-2}\,,$$ 
such that again the stability depends on $h_1$ and is analogous to the time-continuous 
case, if $h_1 < 1$. 
The eigenvalues $\lambda_1, \lambda_2$ correspond with the $h_1$- 
and $r_1$-directions and $\lambda_3, \lambda_4$ with the $y_1$-and $\epsilon_1$-directions.
Moreover, we remark that we can re-interpret the stability conditions to obtain the 
same behaviour as in the continuous time case, such as 
\begin{equation}
1>|\lambda_1| = |1 - 2h_1|,
\end{equation}
precisely as the stability criteria of the Euler method derived from the Dahlquist test equation \cite{Dahlquist}
within each eigenspace of the continuous-time blow-up problem in chart $K_1$.

We observe that the two-dimensional planes
$$ S_{\txta,1}^{\pm} = \{(r_1, y_1, \epsilon_1, h_1) \in D_1 \,:\, y_1 = \pm 1, \ 
\epsilon_1 = 0 \}$$
are invariant manifolds of $D_1$ only consisting of fixed points, attracting in the 
$y_1$-direction and neutral in the other directions. One can extend these manifolds 
$S_{\txta,1}^{\pm}$ to center-stable invariant manifolds $M_{\txta,1}^{\pm}$ (see Figure~\ref{fig:K1}), which are given in $D_1$ by graphs $y_1 = l_{\pm}(\epsilon_1, h_1)$ for mappings $l_\pm$. We can 
derive $l_{\pm}$ from the discrete invariance equation
\begin{equation} 
\label{Invariance_equ}
l_{\pm} (\tilde \epsilon_1, \tilde h_1)= \frac{l_{\pm} (\epsilon_1, h_1) + 
\epsilon_1 h_1}{1 - h_1F_1( l_{\pm} (\epsilon_1, h_1), \epsilon_1)}\,.
\end{equation} 
Solving this equation allows us to make the following statement.

\begin{proposition} 
\label{Invariance_Prop}
Equation~\eqref{Invariance_equ} has the solutions
\begin{align}
l_{-} (\epsilon_1, h_1) &= -1 + \frac{1 - \lambda}{2} \epsilon_1 + 
\mathcal{O}(\epsilon_1^2 h_1)\,, \label{lminus}\\
l_{+} (\epsilon_1, h_1) &= 1 + \frac{1 + \lambda}{2} \epsilon_1 + 
\mathcal{O}(\epsilon_1^2 h_1)\,. \label{lplus}
\end{align}
which characterize $M_{\txta,1}^{-}$ and $M_{\txta,1}^{+}$ respectively. Furthermore, 
$\epsilon_1$ is increasing on $M_{\txta,1}^{-}$ and decreasing on $M_{\txta,1}^{+}$, 
whereas $h_1, r_1$ are decreasing on $M_{\txta,1}^{-}$ and increasing on $M_{\txta,1}^{+}$.
\end{proposition}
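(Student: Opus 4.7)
The plan is to treat equation~\eqref{Invariance_equ} as a functional equation for $l_\pm$, using it both for existence of the invariant graphs $M_{\txta,1}^{\pm}$ and for deriving the asymptotic expansions. I first observe that the right-hand sides of~\eqref{K1dynamics} for $(y_1, \epsilon_1, h_1)$ are independent of $r_1$, so it suffices to analyse the three-dimensional sub-map, whose invariant graphs lift trivially to $D_1$. The sets $\{y_1 = \pm 1,\ \epsilon_1 = 0\}$ are curves of fixed points parameterised by $h_1 \in [\nu/2, \nu]$, and at $(\pm 1, 0, h_1)$ the Jacobian of the reduced map has eigenvalues $\{1,\, 1,\, 1 - 2h_1\}$. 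These curves are thus normally hyperbolic, with a one-dimensional stable $y_1$-direction (factor $1 - 2h_1 \in (0,1)$ for $h_1$ small enough) and a one-dimensional neutral normal direction along $\epsilon_1$. The discrete centre-stable manifold theorem, i.e.\ the map analogue of the HPS/Fenichel result already invoked in Section~\ref{continuoustime}, then yields smooth two-dimensional centre-stable manifolds through these fixed-point curves representable as graphs $y_1 = l_\pm(\epsilon_1, h_1)$ on a neighbourhood in $D_1$.

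To derive the expansions~\eqref{lminus}--\eqref{lplus} I would substitute the ansatz $l_\pm(\epsilon_1, h_1) = \pm 1 + a_\pm \epsilon_1 + R_\pm(\epsilon_1, h_1)$ into~\eqref{Invariance_equ} and match powers of $\epsilon_1$ and $h_1$. A short computation gives $F_1(l_-, \epsilon_1) = (2 a_- + \lambda)\epsilon_1 + O(\epsilon_1^2)$ and $F_1(l_+, \epsilon_1) = (\lambda - 2 a_+)\epsilon_1 + O(\epsilon_1^2)$, while from~\eqref{K1dynamics} one has $\tilde\epsilon_1 - \epsilon_1 = 2\epsilon_1 h_1 F_1 + O((h_1 F_1)^2 \epsilon_1) = O(\epsilon_1^2 h_1)$ and $\tilde h_1 - h_1 = -h_1^2 F_1 = O(\epsilon_1 h_1^2)$, so that $l_\pm(\tilde\epsilon_1, \tilde h_1) - l_\pm(\epsilon_1, h_1) = O(\epsilon_1^2 h_1)$. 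Expanding $(l_\pm + \epsilon_1 h_1)(1 - h_1 F_1)^{-1}$ as a geometric series in $h_1 F_1$ and isolating the coefficient of $\epsilon_1 h_1$ yields, at leading order, the linear conditions $1 - 2 a_- - \lambda = 0$ and $1 + \lambda - 2 a_+ = 0$, that is, $a_- = (1-\lambda)/2$ and $a_+ = (1+\lambda)/2$, as claimed. The remainder estimate $R_\pm = O(\epsilon_1^2 h_1)$ then follows because every correction to the linear-in-$\epsilon_1$ part of $l_\pm$ enters~\eqref{Invariance_equ} multiplied by the factor $h_1 F_1 = O(\epsilon_1 h_1)$, contributing one extra factor of $\epsilon_1 h_1$ relative to the leading term $a_\pm \epsilon_1$.

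The monotonicity assertions are then a direct consequence of~\eqref{K1dynamics}. On $M_{\txta,1}^{-}$ the expansion and the definition $F_1 = 1 - y_1^2 + \lambda \epsilon_1$ give $F_1(l_-, \epsilon_1) = (1-\lambda)\epsilon_1 + \lambda\epsilon_1 + O(\epsilon_1^2) = \epsilon_1 + O(\epsilon_1^2) > 0$ for $\epsilon_1 > 0$ small enough, which is guaranteed by the domain constraints $\epsilon_1 \leq 2\delta$ and $|\lambda\delta| \leq 1$. Hence $1 - h_1 F_1 \in (0,1)$, and reading off~\eqref{K1dynamics} gives $\tilde\epsilon_1 > \epsilon_1$ while $\tilde h_1 < h_1$ and $\tilde r_1 < r_1$. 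On $M_{\txta,1}^{+}$ the analogous computation yields $F_1(l_+, \epsilon_1) = -\epsilon_1 + O(\epsilon_1^2) < 0$, hence $1 - h_1 F_1 > 1$ and the opposite monotonicity. The main obstacle I anticipate is making the sharp remainder estimate $R_\pm = O(\epsilon_1^2 h_1)$ rigorous, because at $h_1 = 0$ equation~\eqref{Invariance_equ} is trivially satisfied and therefore does not by itself pin down $l_\pm(\cdot, 0)$; this can be handled either by constructing $l_\pm$ via a contraction-mapping argument on a weighted function space whose elements carry the required factor of $h_1$ in the remainder, or by iterating~\eqref{Invariance_equ} forward and exploiting the contraction factor $1 - 2h_1$ in the stable $y_1$-direction to propagate the estimate.
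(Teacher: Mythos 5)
Your proposal is correct, and its computational core coincides with the paper's proof: the paper establishes \eqref{lminus}--\eqref{lplus} by inserting the graphs into \eqref{Invariance_equ}, computing $F_1(l_{-}(\epsilon_1,h_1),\epsilon_1)=\epsilon_1+\mathcal{O}(\epsilon_1^2)$ (this is \eqref{F_manifold}) and $F_1(l_{+}(\epsilon_1,h_1),\epsilon_1)=-\epsilon_1+\mathcal{O}(\epsilon_1^2)$, expanding $(1-h_1F_1)^{-1}$ and checking consistency at order $\epsilon_1 h_1$; your matching that pins down $a_-=(1-\lambda)/2$ and $a_+=(1+\lambda)/2$ is exactly this verification run in reverse, and your monotonicity argument (sign of $F_1$ on the graphs, hence $1-h_1F_1\in(0,1)$ on $M_{\txta,1}^{-}$ and $1-h_1F_1>1$ on $M_{\txta,1}^{+}$, using $h_1\epsilon_1<1$) is verbatim the paper's. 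You add two things the paper does not contain: an existence argument via the centre-manifold theorem for the decoupled $(y_1,\epsilon_1,h_1)$ sub-map with eigenvalues $\{1,1,1-2h_1\}$ along the fixed-point curves --- the paper merely asserts, before the proposition, that $S_{\txta,1}^{\pm}$ extend to invariant graphs --- and the explicit caveat about the remainder. That caveat is well founded, and in fact sharper than you may realize: the paper's proof, exactly like your formal matching, only shows that the stated two-term graphs satisfy \eqref{Invariance_equ} up to a per-step defect of size $\mathcal{O}(\epsilon_1^2h_1)$, and since the equation is vacuous at $h_1=0$ this does not bound the distance to an exact invariant graph. If you push the matching one order further, the $h_1^2$-contributions $\epsilon_1h_1^2F_1$ and $l_\pm h_1^2F_1^2$ cancel and invariance at order $h_1\epsilon_1^2$ forces an $h_1$-\emph{independent} quadratic coefficient, $b_-=\tfrac{1}{2}(a_-^2-a_-)=\tfrac{\lambda^2-1}{8}$ and $b_+=\tfrac{1}{2}(a_+-a_+^2)=\tfrac{1-\lambda^2}{8}$ --- the same values as for the continuous-time centre manifold in \eqref{K1_dynamics} --- so for $\lambda\neq\pm1$ an exact invariant graph deviates from the two-term expansion at genuine order $\epsilon_1^2$ rather than $\epsilon_1^2h_1$. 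This delicacy is thus shared by (indeed inherited from) the paper's own proof; it is harmless downstream, since only the linear coefficients, the resulting sign of $F_1$ on the graphs, and the monotonicity are used in Propositions~\ref{Justification} and~\ref{contraction_K1}, for which an $\mathcal{O}(\epsilon_1^2)$ remainder serves equally well. Your proposed graph-transform or weighted contraction construction, exploiting the normal factor $1-2h_1$, is the appropriate tool if one wants to make the remainder statement rigorous.
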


\begin{proof}
It is easy to derive that for $l_{-} (\epsilon_1, h_1)$ given by~\eqref{lminus}, we have
\begin{equation} \label{F_manifold}
 F_1(l_{-} (\epsilon_1, h_1), \epsilon_1) = \epsilon_1 + \mathcal{O}(\epsilon_1^2).
\end{equation}
Hence, we observe that 
$$ \tilde \epsilon_1 =  \epsilon_1 (1 - h_1F_1(l_{-} (\epsilon_1, h_1), \epsilon_1))^{-2} 
= \epsilon_1 + \mathcal{O}(\epsilon_1^2 h_1)$$
and
$$ \tilde h_1 = h_1 + \mathcal{O}(h_1^2 \epsilon_1)\,.$$
Therefore, we deduce that
\begin{align*}
\frac{l_{-} (\epsilon_1, h_1) + \epsilon_1 h_1}{1 - h_1F( l_{-} 
(\epsilon_1, h_1), \epsilon_1)} &= (l_{-} (\epsilon_1, h_1) + \epsilon_1 
h_1)(1 + h_1 \epsilon_1 + \mathcal{O}(\epsilon_1^2 h_1))\\
&= l_{-} (\epsilon_1, h_1) - h_1 \epsilon_1 + \frac{1 - \lambda}{2} 
\epsilon_1^2 h_1 + \epsilon_1 h_1 + \mathcal{O}(\epsilon_1^2 h_1)\\
 &= l_{-} (\epsilon_1, h_1) + \mathcal{O}(\epsilon_1^2 h_1) = -1 + 
\frac{1 - \lambda}{2} \epsilon_1 + \mathcal{O}(\epsilon_1^2 h_1) = 
l_{-} (\tilde \epsilon_1, \tilde h_1),
\end{align*}
which shows the claim for $l_{-} (\epsilon_1, h_1)$. Since we can assume 
that $h_1 \epsilon_1 < 1$, the dynamics on $M_{\txta,1}^{-}$ follow as stated.
Similarly we can derive that for $l_{+} (\epsilon_1, h_1)$ given by~\eqref{lplus}, 
we have $$ F_1(l_{+} (\epsilon_1, h_1)) = -\epsilon_1 + \mathcal{O}(\epsilon_1^2).$$
The statements then follow analogously to before.
\end{proof}

For all trajectories, as explained above, we have to consider the entry region
\begin{equation*}
\Sigma_{1,-}^{\textnormal{in}} :=  \{(r_1, y_1, \epsilon_1, h_1) 
\in D_1 \,:\, r_1 = \rho, \ h_1 = \nu, \epsilon_1 = \delta/4 \}\,.
\end{equation*} 
Before exiting $\hat D_1$ for the first time, the dynamics must reach the set
\begin{equation*}
\Sigma_{1,-}^{\textnormal{out}} =  \left\{(r_1, y_1, \epsilon_1, h_1) 
\in \mathbb{R}^4 \,:\, \frac{\rho}{2}\leq r_1 \leq \frac{\rho}{2}(1+ \nu), 
\ \frac{\nu}{2}\leq h_1 \leq \frac{\nu}{2}(1+\nu), \ \delta(1 -2 \nu) \leq 
\epsilon_1 \leq \delta\right\}\,,
\end{equation*} 
since $F_1(y_1, \epsilon_1) \leq2$. Next, we want to find a set $R \subset \Sigma_{1,-}^{\textnormal{in}} $ such that $M_{\txta,1}^- \cap \Sigma_{1,-}^{\textnormal{in}} 
\subset R$ and there is a well-defined map $\Pi_{1,-}: R
\to \Sigma_{1,-}^{\textnormal{out}}$ that maps points in $R$ 
along a trajectory of~\eqref{K1dynamics} to a first entry point in 
$\Sigma_{1,-}^{\textnormal{out}}$.
By Proposition~\ref{Invariance_Prop}, this is feasible for $R$ small enough such that trajectories through $R$ stay sufficiently close to $M_{\txta,1}^-$ in the first 
part of the passage in $K_1$.
 
We choose $R$ to be the interval 
\begin{equation} \label{R1}
R_1 := \left\{(r_1, y_1, \epsilon_1, h_1) \in \Sigma_{1,-}^{\textnormal{in}} 
\,;\, -1- \beta_1(\lambda) \leq y_1 \leq -1 + \hat \beta_1  \right\}
\end{equation}
with, for example, 
\begin{equation} \label{betas}
\hat \beta_1 := \left| \lambda - 1\right|  \delta\,, \quad \beta_1(\lambda)
:= \begin{cases} \frac{\lambda}{16} \delta &\text{ if } 0 < \lambda < 1 \\
\frac{2 \lambda -1}{16} \delta &\text{ otherwise.}
\end{cases}  
\end{equation}
Note that with these choices we have $M_{\txta,1}^- \cap \Sigma_{1,-}^{\textnormal{in}} 
\subset R_1$ for $\nu, \delta$ sufficiently small. Furthermore, these choices guarantee 
that the trajectories stay close to $M_{\txta,1}^-$ such that $F_1(y_1, \epsilon_1)$ 
is positive, and, hence, we can formulate the following Proposition (see Figure~\ref{fig:K1}).

\begin{proposition} 
\label{Justification}
Trajectories in $\hat{D}_1$ starting in $R_1$ are increasing in $\epsilon_1$ and 
decreasing in $h_1, r_1$. Hence, the transition map $\Pi_{1,-}: R_1
\to \Sigma_{1,-}^{\textnormal{out}}$ is well-defined.
\end{proposition}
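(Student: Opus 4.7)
The plan is to identify the positivity $F_1(y_1,\epsilon_1) = 1 - y_1^2 + \lambda\epsilon_1 > 0$ along the orbit as the sole key estimate: once this is in hand, all three monotonicity claims fall out algebraically from~\eqref{K1dynamics}. Since $h_1 \leq \nu \leq \delta$ is small and $F_1$ is uniformly bounded above by $2$ on $\hat D_1$, the factor $1 - h_1 F_1$ lies strictly in $(0,1)$, and the formulas $\tilde r_1 = r_1(1-h_1F_1)$, $\tilde h_1 = h_1(1-h_1F_1)$, $\tilde\epsilon_1 = \epsilon_1(1-h_1F_1)^{-2}$ immediately yield $\tilde r_1 < r_1$, $\tilde h_1 < h_1$, $\tilde\epsilon_1 > \epsilon_1$. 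In parallel, the conservation laws $\epsilon = r_1^2\epsilon_1$ and $h = h_1/r_1$ (inherited from the fact that $P$ fixes $\epsilon$ and $h$) reduce the three-dimensional $(r_1,h_1,\epsilon_1)$ dynamics to a scalar dynamics in $r_1$, which I will reuse for the exit analysis.

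To establish $F_1 > 0$ I first verify it directly on $\Sigma_{1,-}^{\textnormal{in}}$, where $\epsilon_1 = \delta/4$: the two cases of $\beta_1(\lambda)$ in~\eqref{betas} are designed precisely so that at $y_1 = -1-\beta_1$ the quantity $|1 - y_1^2| = 2\beta_1+\beta_1^2$ is dominated by $\lambda\epsilon_1 = \lambda\delta/4$, both when $0<\lambda<1$ (so $\beta_1 = \lambda\delta/16$) and when $\lambda>1$ (so $\beta_1 = (2\lambda-1)\delta/16$); at the other end $y_1 = -1+\hat\beta_1$ the expression $1 - y_1^2 = 2\hat\beta_1-\hat\beta_1^2$ is itself positive, so $F_1 > 0$ holds trivially there. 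The more substantial step is to propagate $F_1 > 0$ forward under iteration, which I will do inductively leaning on Proposition~\ref{Invariance_Prop}: the centre manifold $M_{\txta,1}^-$ is the graph $y_1 = l_-(\epsilon_1,h_1) = -1 + (1-\lambda)\epsilon_1/2 + \mathcal{O}(\epsilon_1^2 h_1)$, and the $y_1$-direction transverse to it carries the contracting linearization eigenvalue $1 - 2h_1 < 1$. Consequently the deviation $y_1 - l_-(\epsilon_1,h_1)$ contracts to leading order under each step of~\eqref{K1dynamics}, which, provided the initial strip is wide enough to accommodate the drift of $l_-$ as $\epsilon_1$ grows from $\delta/4$ up to $\delta$, keeps $y_1$ trapped in $[-1-\beta_1(\lambda),-1+\hat\beta_1]$ for every subsequent iterate inside $\hat D_1$.

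Finally, the well-definedness of $\Pi_{1,-}$ follows from the scalar monotonicity in $r_1$: the per-step decrement $r_1 h_1 F_1$ is bounded below uniformly on $\hat D_1 \cap R_1$ by a positive multiple of $\rho\nu\delta$, so $r_1$ must cross the threshold $\rho/2$ after finitely many iterates; the invariants $\epsilon_1 r_1^2 = \rho^2\delta/4$ and $h_1/r_1 = \nu/\rho$ make this crossing coincide with $\epsilon_1 = \delta$ and $h_1 = \nu/2$; and the slack widths built into the definition of $\Sigma_{1,-}^{\textnormal{out}}$ (of relative size $\nu$ in $r_1,h_1$ and $2\nu$ in $\epsilon_1$) absorb precisely the single-step overshoot across the boundary. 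The main obstacle is the inductive step of the second paragraph: the constants $\beta_1(\lambda),\hat\beta_1$ in~\eqref{betas} leave little slack, and I will need to balance carefully the contraction $1-2h_1$ in the transverse $y_1$-direction against the growth of $l_-(\epsilon_1,h_1)$ as $\epsilon_1$ quadruples during the passage, in order to certify that iterates never leak out of the prescribed strip. The remaining pieces, once $F_1 > 0$ is secured, are essentially algebraic manipulations of~\eqref{K1dynamics} together with the two invariants.
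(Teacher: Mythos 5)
Your skeleton matches the paper's proof in its first and third parts: positivity of $F_1$ is indeed the only key estimate (the paper's proof opens with exactly this reduction), and your exit bookkeeping via the invariants $\epsilon = r_1^2\epsilon_1$, $h = h_1/r_1$ and the slack built into $\Sigma_{1,-}^{\textnormal{out}}$ is the same argument the paper gives when it introduces that set. The genuine gap is the step you yourself flag as the ``main obstacle'' and leave open: propagating $F_1>0$ by proving forward-invariance of the \emph{fixed} strip $y_1 \in [-1-\beta_1(\lambda), -1+\hat\beta_1]$. This is not merely tight; it is false for $\lambda > 7/6$. By \eqref{lminus} the manifold $M_{\txta,1}^-$ sits at $y_1 = -1 + \frac{1-\lambda}{2}\epsilon_1 + \mathcal{O}(\epsilon_1^2 h_1)$, so as $\epsilon_1$ grows from $\delta/4$ towards $\delta$ it drifts down to $-1-\frac{\lambda-1}{2}\delta$, which lies \emph{below} $-1-\beta_1(\lambda) = -1 - \frac{2\lambda-1}{16}\delta$ whenever $\frac{\lambda-1}{2} > \frac{2\lambda-1}{16}$, i.e.\ $\lambda > 7/6$. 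Since the transverse contraction $1-2h_1$, applied over the $N \gtrsim (\nu\delta)^{-1}$ steps of the passage, locks orbits onto $M_{\txta,1}^-$ to exponential accuracy, trajectories follow the manifold out through the bottom of your strip, and your induction cannot close. A second, smaller omission: your entry verification treats only $0<\lambda<1$ and $\lambda>1$; for $\lambda\le 0$ the mechanism ``$|1-y_1^2|$ dominated by $\lambda\epsilon_1$'' breaks down, since there $\beta_1(\lambda)=\frac{2\lambda-1}{16}\delta<0$ and $\lambda\epsilon_1\le 0$ --- positivity at entry instead comes from $1-y_1^2 = \frac{1-2\lambda}{8}\delta + \mathcal{O}(\delta^2) > 0$ beating $|\lambda|\delta/4$, the $\lambda$-terms cancelling to leave the margin $\delta/8$ as in the paper's bound $F_1 \geq \frac{\delta-\mathcal{O}(\delta^2)}{8}$.

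The repair is what the paper actually does: propagate positivity of $F_1$ along a neighbourhood of the \emph{drifting} manifold rather than membership in a fixed strip. Writing $y_1 = -1+u$, one step of \eqref{K1dynamics} gives $\tilde u - u = h_1\,\frac{\epsilon_1 + y_1 F_1}{1-h_1F_1} = 2h_1\bigl(u^*(\epsilon_1)-u\bigr) + \mathcal{O}\bigl(h_1(u^2+u\epsilon_1)\bigr)$ with $u^*(\epsilon_1)=\frac{1-\lambda}{2}\epsilon_1$, a monotone relaxation without overshoot (as $0<2h_1<1$) that is fast relative to the $\mathcal{O}(\nu\delta)$ per-step drift of $\epsilon_1$; hence $u_n$ remains between its initial value and $u^*(\epsilon_1)$, and for $u \geq u^*$ one has $F_1 = 2u-u^2+\lambda\epsilon_1 \geq \epsilon_1 - u^2 > 0$ for $u=\mathcal{O}(\delta)$, which is precisely the content of \eqref{F_manifold}, $F_1(l_-,\epsilon_1)=\epsilon_1+\mathcal{O}(\epsilon_1^2)$, extended off the graph. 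The strip computation in \eqref{betas} is then needed only at entry, and to guarantee $M_{\txta,1}^-\cap\Sigma_{1,-}^{\textnormal{in}}\subset R_1$; this is how Proposition~\ref{Invariance_Prop} enters the paper's argument, and it is the ingredient your induction should be built around.
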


\begin{proof}
It is enough to show that in this case $F_1(y_1, \epsilon_1)$ is positive. 
If $ \lambda \geq 1$ or $\lambda \leq 0$, we observe that $ \beta_1(\lambda) 
= \frac{2 \lambda -1}{16} \delta$ implies $F_1 \geq \frac{\delta - \mathcal{O}(\delta^2)}{8}$. 
If $ 0 < \lambda < 1$, we have
$$ F_1(y_1, \epsilon_1) \geq   1 - \left(-1 - \frac{\lambda}{16} \delta\right)^2 + \lambda 
\epsilon_1  \geq  \frac{\lambda}{8} \delta  - \left( \frac{\lambda}{16} \right)^2 \delta^2  \,.$$
Together with the considerations above, we can conclude the claim.
\end{proof}

\begin{figure}[htbp]
        \centering
  		\begin{overpic}[width=0.8\textwidth]{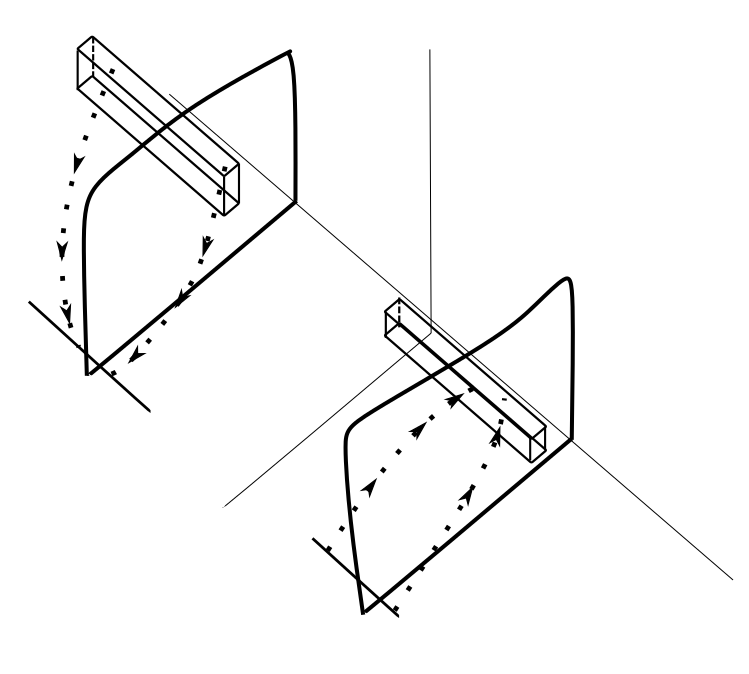}            
                \put(44,45){\small $\Sigma_{1,-}^{\textnormal{out}}$}
                \put(2,42){ $\Sigma_{1,+}^{\textnormal{out}}$}
          \put(39,13){ $R_1$}   
          \put(5,78){ $R_2$}  
           \put(89,12){ $y_1$} 
            \put(52,79){ $\epsilon_1$} 
             \put(27,19){ $r_1$}
             \put(40,76){  $M_{\txta,1}^{+}$} 
             \put(77,47){ $M_{\txta,1}^{-}$}  
        \end{overpic}
		\caption{Illustration of the dynamics in chart $K_1$ in $(r_1,y_1,\epsilon_1)$-space. Note that, due to the relation $h_1 = r_1 h$, we can interpret $l_{\pm}(\epsilon_1, h_1)$ as functions of $(\epsilon_1, r_1)$ and sketch its graphs, the center-stable manifolds $M_{\txta,1}^{\pm}$, as in the figure above. The figure shows trajectories (dotted lines) starting in $R_1$ up to reaching $\Sigma_{1,-}^{\textnormal{out}}$ in the vicinity of $M_{\txta,1}^{-}$ and trajectories starting in $R_2$ up to reaching the vicinity of $\Sigma_{1,+}^{\textnormal{out}}$ close to $M_{\txta,1}^{+}$.}
        \label{fig:K1}
\end{figure}

We can make the following statement about the transition time from $R_1$ to 
$\Sigma_{1,-}^{\textnormal{out}}$ which will be crucial for estimates on the 
contraction close to $M_{\txta,1}^-$. Define $\gamma := 2 \left| \lambda -1\right| 
+ \left| \lambda \right|$ and assume without loss of generality that $ \nu < \frac{1}{8}$.

\begin{lemma} 
\label{transitiontime_K1}
The transition time $N$ of system~\eqref{K1dynamics} from a point $p = (\rho, y_1, \delta/4, \nu)$ 
in $R_1$ to the point $\Pi_{1,-}(p)$ in $\Sigma_{1,-}^{\textnormal{out}}$ satisfies
$$ N \geq \frac{1}{17 \gamma } \frac{1}{\nu \delta} \,.$$
\end{lemma}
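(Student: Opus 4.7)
The plan is to extract a lower bound on $N$ from the scalar recursion
\[
\tilde \epsilon_1 = \epsilon_1 (1 - h_1 F_1(y_1,\epsilon_1))^{-2}
\]
by showing that its per-step growth factor is close to $1$. Since the exit slab $\Sigma_{1,-}^{\textnormal{out}}$ requires $\epsilon_1$ to reach at least $\delta(1-2\nu)$ starting from $\epsilon_1 = \delta/4$, a sufficiently small upper bound on $h_1 F_1$ at every iterate will force many steps to be taken.

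The first step is to establish the uniform upper bound $F_1(y_1, \epsilon_1) = 1 - y_1^2 + \lambda \epsilon_1 \leq 2 \hat\beta_1 + |\lambda|\delta = \gamma \delta$ along every trajectory that starts in $R_1$. On the initial section one has $y_1 \in [-1 - \beta_1, -1 + \hat\beta_1]$, hence $y_1^2 \geq (1 - \hat\beta_1)^2$ and the estimate holds there. I would then carry the inclusion $y_1 \in [-1 - \beta_1, -1 + \hat\beta_1]$ forward along the iteration using the attractiveness of $M_{\txta,1}^-$: the linearisation of the $y_1$-update at $v_{\txta,1}^-$ has eigenvalue $1 - 2h_1 < 1$, while the manifold value $l_-(\epsilon_1, h_1)$ from Proposition~\ref{Invariance_Prop} stays well inside this interval as $\epsilon_1$ sweeps $[\delta/4, \delta]$. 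A short inductive argument then shows that the contraction towards the graph dominates the transverse drift of order $\epsilon_1 h_1$ and keeps $y_1$ inside $R_1$. Combined with $h_1 \leq \nu$ and a choice of $\delta$ small enough to guarantee $\nu \gamma \delta \leq 1/2$, this yields the per-step bound $h_1 F_1 \leq \nu \gamma \delta \leq 1/2$ at every iterate.

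The second step takes logarithms in the $\epsilon_1$-recursion and uses the elementary inequality $-\log(1-x) \leq 2x$ for $x \in [0, 1/2]$:
\[
\log \epsilon_1^{(k+1)} - \log \epsilon_1^{(k)} = -2 \log \bigl( 1 - h_1^{(k)} F_1^{(k)} \bigr) \leq 4\, h_1^{(k)} F_1^{(k)} \leq 4 \nu \gamma \delta .
\]
Telescoping from $k=0$ to $k=N-1$ gives $\log (\epsilon_1^{(N)}/\epsilon_1^{(0)}) \leq 4 N \nu \gamma \delta$. Since $\epsilon_1^{(0)} = \delta/4$ and $\epsilon_1^{(N)} \geq \delta(1-2\nu) \geq 3\delta/4$ (using $\nu < 1/8$), the ratio is at least $3$, so
\[
N \geq \frac{\log 3}{4 \nu \gamma \delta} \geq \frac{1}{17 \nu \gamma \delta},
\]
the last step following from $4 / \log 3 \approx 3.64 < 17$.

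The main obstacle is the invariance claim in the first step. Although the strong contraction in the $y_1$-direction intuitively keeps $y_1$ near $-1$, a clean quantitative argument must handle the asymmetry of $R_1$ (the constants $\beta_1$ and $\hat\beta_1$ are of different sizes and depend on the sign of $\lambda - 1$), the slow evolution of the target value $l_-(\epsilon_1, h_1)$ as $\epsilon_1$ grows from $\delta/4$ to $\delta$, and the fact that the attraction rate $1 - 2h_1$ itself varies along the trajectory. Once the invariance is in hand, the rest of the proof is the short telescoping computation above.
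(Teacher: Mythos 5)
Your proof is correct and reaches the stated bound by a genuinely different computation than the paper. The paper works additively: it bounds the per-step growth via $h_1F_1 \leq \nu\gamma\delta - \nu(\lambda-1)^2\delta^2$ (the same key estimate as your $h_1F_1 \leq \nu\gamma\delta$), then proves by induction the linear bound $\epsilon_1(n) \leq \frac{\delta}{4} + n\left(2\gamma\nu\delta^2 + f(\nu,\delta)\right)$ with explicit higher-order error terms $f,g,\tilde f$, rewrites this via a geometric-series identity as $\frac{\delta/4}{1-n(8\gamma\nu\delta + \tilde f)}$, and solves the exit condition $\epsilon_1(N)\geq \delta(1-2\nu)$ for $N$. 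Your multiplicative route---taking logarithms and using $-\log(1-x)\leq 2x$ on $[0,1/2]$, then telescoping---avoids all of this bookkeeping, is shorter, and in fact yields the stronger constant $\frac{\log 3}{4} \approx 0.27$ in place of $\frac{1}{17}$; note also that steps with $F_1<0$ contribute negatively to the telescoped sum, so your argument does not even strictly need positivity of $F_1$ (which the paper secures through Proposition~\ref{Justification}), only $h_1F_1 \leq \nu\gamma\delta < 1$. On the point you flag as the main obstacle: the paper is in the same position as you---it simply inserts the worst-case value $y_1(n) = -1 + |\lambda-1|\delta$ into $F_1$ without a separate invariance proof, leaning on the choice of $R_1$ and the attractivity of $M_{\txta,1}^{-}$ from Proposition~\ref{Invariance_Prop}. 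One small correction to your sketch, though: full invariance of the $y_1$-window of $R_1$ is both stronger than needed and can fail, since for $\lambda > 7/6$ the graph value $l_-(\epsilon_1,h_1) = -1 - \frac{\lambda-1}{2}\epsilon_1 + \mathcal{O}(\epsilon_1^2 h_1)$ drops below the lower edge $-1-\beta_1(\lambda)$ as $\epsilon_1$ grows toward $\delta$. What your estimate actually requires is only the one-sided bound $y_1 \leq -1+\hat\beta_1$ (equivalently $y_1^2 \geq (1-\hat\beta_1)^2$), which is preserved because the manifold only moves further from $-1+\hat\beta_1$ and trajectories are attracted to it; with that adjustment your inductive argument goes through and, if anything, makes explicit a step the published proof leaves implicit.
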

\begin{proof}
Let $(\epsilon_1(n))_{n\in \mathbb{N}}$ denote the trajectory starting at $\epsilon_1(0) = \delta/4 $ with
$$ \epsilon_1(n+1) =  \epsilon_1(n) (1 - h_1(n)F_1(y_1(n), \epsilon_1(n)))^{-2}.$$
We can show by induction that for all $n \in \mathbb{N}$ such that $\epsilon_1(n) \leq \delta$ we have
\begin{equation} \label{claim1}
\epsilon_1(n) \leq  \frac{\delta}{4} + n \left( 2\gamma\nu \delta^2 +  
f(\nu, \delta) \right)\,,
\end{equation}
where $f(\nu, \delta) = \mathcal{O}(\nu \delta^3)$ does not depend on $n$.
In more detail, we observe that
\begin{align*}
h_1(n)F_1(y_1(n), \epsilon_1(n)) &\leq  h_1(n) \left[ 1 - (-1 + \left| \lambda - 
1 \right| \delta)^2 + \lambda \epsilon_1(n)) \right] \\
& \leq \nu \left[ 2\left| \lambda - 1 \right| \delta + \left| \lambda \right| \delta) 
- ( \lambda - 1 )^2 \delta^2 \right] \\
& = \nu \gamma \delta - \nu ( \lambda - 1 )^2 \delta^2\,.
\end{align*}
Hence, we conclude with a first order Taylor approximation that for some $g(\nu, \delta) = \mathcal{O}(\nu \delta^2)$ we have
\begin{align*}
\epsilon_1(1) &\leq \frac{\delta}{4} \left( 1 + \nu \gamma \delta + 
g(\nu, \delta) \right)^2  
= \frac{\delta}{4} + \frac{\gamma}{2}\nu \delta^2  + \frac{\delta}{4} \left(2 g(\nu, \delta) + g(\nu, \delta)^2 + \nu^2 \gamma^2 \delta^2 + 2 g(\nu,\delta) \nu \gamma \delta \right)
\\
& \leq \frac{\delta}{4} + 2\gamma\nu \delta^2 + f(\nu, \delta)
\,,
\end{align*}
where  $f(\nu, \delta) =  \delta (2 g(\nu, \delta) + \mathcal{O}(\nu^2 \delta^2) )= \mathcal{O}(\nu \delta^3)$. Similarly, the step from $n$ to $n +1$ can be written as
\begin{align*}
\epsilon_1(n+1) &\leq   \epsilon_1(n) \left( 1 + \nu \gamma \delta + 
g(\nu, \delta) \right)^2 \leq
\epsilon_1(n) + 2 \gamma \nu  \delta  \epsilon_1(n) 
+  f(\nu,\delta)\,, \\
&\leq  \frac{\delta}{4} + n \left( 2\gamma\nu \delta^2 +  
f(\nu, \delta) \right) +  2 \gamma \nu \delta^2 
+ f(\nu,\delta)\,, \\
&= \frac{\delta}{4} + (n+1) \left( 2 \gamma \nu \delta^2 +  
f(\nu,\delta) \right)\,.
\end{align*}
This shows \eqref{claim1} for all $n \in \mathbb{N}$ such that $\epsilon_1(n) \leq \delta$.
We can rewrite the right hand side of~\eqref{claim1}, using a geometric series, as
$$\frac{\delta}{4} + n \left( 2\gamma\nu \delta^2 +  
f(\nu, \delta) \right) 
= \frac{\frac{\delta}{4}}{1 - n\left( 8 \gamma \nu \delta + \tilde{f}(\nu, \delta)  \right)}\,,$$
where $ \tilde{f}(\nu, \delta) = \mathcal{O}\left(\nu \delta^2\right)$.
By definition of the transition time $N$ we have $ \epsilon_1(N) \geq \delta(1 - 2 \nu)$. 
Hence, we deduce that
$$  \delta(1 - 2 \nu) \leq \frac{\frac{\delta}{4}}{1 - N ( 8 \gamma \nu \delta + \tilde{f}(\nu, \delta)  )}\,,$$
and therefore
$$  \delta \left(1 - 2 \nu- \frac{1}{4} \right)  \leq N \delta(1 - 2 \nu)( 8 \gamma \nu \delta 
+ \tilde{f}(\nu, \delta)  ) \,.$$
Finally, for $\delta$ sufficiently small and due to $\nu < \frac{1}{8}$, this leads to
$$N \geq \frac{\frac{\delta}{2}}{\delta(1 - 2 \nu)( 8 \gamma \nu \delta + \tilde{f}(\nu, \delta)  )}
\geq \frac{1}{17 \gamma \nu \delta } \,,$$
which concludes the proof.
\end{proof}

In addition to the first passage moving up the sphere, we already anticipate that 
for $\lambda<1$ trajectories eventually re-enter $K_1$ from 
$K_2$. With more precision to be added after the analysis in chart $K_2$, we define
\begin{align*}
\Sigma_{1,+}^{\textnormal{in}} &:=  \{(r_1, y_1, \epsilon_1, h_1) \in D_1 \,:\, 
\left| \epsilon_1 - \delta \right| \text{ small}, \left| r_1 - \frac{\rho}{2} \right| \text{ small}, \left| h_1 - \frac{\nu}{2} \right| \text{ small} \}\,, \\
\Sigma_{1,+}^{\textnormal{out}} &:= \{(r_1, y_1, \epsilon_1, h_1) \in D_1 \,:\,
 r_1 = \rho, \ h_1 = \nu, \epsilon_1 = \delta/4, \ y_1 > 0 \}\,,
\end{align*}
and denote by $\Pi_{1,+}: \Sigma_{1,+}^{\textnormal{in}} \to \Sigma_{1,+}^{\textnormal{out}}$ 
the map that sends points in $\Sigma_{1,+}^{\textnormal{in}}$ along a trajectory 
of~\eqref{K1dynamics} to the point of this trajectory, which is closest to 
$\Sigma_{1,+}^{\textnormal{out}}$. Note that $\Pi_{1,+}$ is well-defined sufficiently 
close to $M_{\txta,1}^+$ according to Proposition~\ref{Invariance_Prop}. In more detail, for $\beta_1^+$ and $\hat \beta_1^+$ to be determined more precisely in the analysis of chart $K_2$, there is
\begin{equation} \label{R2}
R_2 := \left\{(r_1, y_1, \epsilon_1, h_1) \in \Sigma_{1,+}^{\textnormal{in}} \,;\, 
1- \hat \beta_1^+\leq  y_1  \leq 1 + \beta_1^+ \right\}
\end{equation}
such that $M_{\txta,1}^+ \cap \Sigma_{1,+}^{\textnormal{in}} \subset R_2$ and $\Pi_{1,+}$ is well-defined on $R_2$; see also Figure~\ref{fig:K1}. Of course,
a completely analogous result for the passage time as stated in 
Proposition~\ref{transitiontime_K1} also holds for the map $\Pi_{1,+}$. We can use the 
lower bounds on the transition times to find the following lower bounds for the contraction 
rates of $\Pi_{1,-}|R_1$ and $\Pi_{1,+}|R_2$.

\begin{proposition} 
\label{contraction_K1}
There are constants $K_1, K_2 > 0$ such that for any $c$ with $0 < c < \nu = \rho h $
\begin{enumerate}
\item  the map $\Pi_{1,-}|R_1$ is a contraction (in the $y_1$-direction) with a rate stronger than
 $$ K_1 (1- c)^{\frac{1}{17 \gamma } \frac{1}{\nu \delta} }.$$
\item the map $\Pi_{1,+}|R_2$ is a contraction (in the $y_1$-direction) with a rate stronger than
 $$ K_2 (1- c)^{\frac{1}{17 \gamma } \frac{1}{\nu \delta} }.$$
\end{enumerate}
\end{proposition}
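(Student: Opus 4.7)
The plan is to track, at each iteration, the signed distance
\[
u_k := y_1^{(k)} - l_{\pm}\bigl(\epsilon_1^{(k)}, h_1^{(k)}\bigr)
\]
of a trajectory from the invariant graph $M_{\txta,1}^{\pm}$ (use $l_{-}$ for assertion~(1), $l_{+}$ for assertion~(2)). Because $M_{\txta,1}^{\pm}$ is invariant by Proposition~\ref{Invariance_Prop}, subtracting the fixed-point relation \eqref{Invariance_equ} from the $y_1$-update in \eqref{K1dynamics} and using the algebraic identity $F_1(y_1,\epsilon_1) - F_1(l_{\pm},\epsilon_1) = -(y_1 + l_{\pm})\, u$, a short direct calculation yields the exact one-step recursion
\[
u_{k+1} = \frac{a_k - h_1^{(k)}\bigl(y_1^{(k)} + l_{\pm}^{(k)}\bigr)\bigl(l_{\pm}^{(k)} + \epsilon_1^{(k)} h_1^{(k)}\bigr)}{a_k\bigl(a_k + h_1^{(k)}(y_1^{(k)} + l_{\pm}^{(k)})\, u_k\bigr)}\, u_k,
\]
where $a_k := 1 - h_1^{(k)} F_1(l_{\pm}^{(k)}, \epsilon_1^{(k)}) = 1 + \mathcal{O}(h_1 \epsilon_1)$ by \eqref{F_manifold} (and its counterpart for $l_{+}$, for which $F_1(l_{+},\epsilon_1) = -\epsilon_1 + \mathcal{O}(\epsilon_1^2)$).

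Inserting $l_{\pm} = \pm 1 + \mathcal{O}(\epsilon_1)$ and Taylor-expanding, this prefactor collapses to the Jacobian eigenvalue $\lambda_1 = 1 - 2 h_1^{(k)}$ (the same leading value in both cases since $\partial_{y_1} F_1 = -2 y_1$ has magnitude $2$ at $y_1 = \pm 1$) plus controllable corrections:
\[
u_{k+1} = (1 - 2 h_1^{(k)})\bigl(1 + \mathcal{O}(h_1^{(k)} \epsilon_1^{(k)}) + \mathcal{O}(h_1^{(k)} u_k)\bigr)\, u_k.
\]
Because trajectories through $R_1$ (resp.\ $R_2$) remain in $\hat D_1$ throughout the passage, $h_1^{(k)} \in [\nu/2, \nu]$, so $1 - 2 h_1^{(k)} \leq 1 - \nu \leq 1 - c$ for every $c < \nu$; choosing $\delta$ small enough (depending on $c$) absorbs the $\mathcal{O}(\delta)$-correction uniformly in $k$ so that the per-step rate is bounded by $1 - c$.

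Iterating this one-step bound over the passage of length $N$ and invoking Lemma~\ref{transitiontime_K1} to bound $N \geq \tfrac{1}{17\gamma\nu\delta}$ from below gives
\[
|u_N| \leq \prod_{k=0}^{N-1}(1 - 2 h_1^{(k)}) \prod_{k=0}^{N-1}\bigl(1 + C\, h_1^{(k)} \epsilon_1^{(k)}\bigr)\, |u_0| \leq K_1 (1-c)^{\frac{1}{17\gamma\nu\delta}}\, |u_0|,
\]
where $K_1$ absorbs the second product. The summability $\sum_k h_1^{(k)} \epsilon_1^{(k)} = \mathcal{O}(1)$ is the heart of the estimate: by \eqref{F_manifold} we have $\tilde\epsilon_1 - \epsilon_1 \sim 2 h_1 \epsilon_1^2$ on $M_{\txta,1}^{-}$, so the telescoping sum acts like $\tfrac{1}{2} \int \rmd \epsilon_1/\epsilon_1 \sim \tfrac{1}{2}\ln\bigl(\epsilon_1(N)/\epsilon_1(0)\bigr) = \tfrac{1}{2}\ln 4$, uniformly in $\nu, \delta$; the analogous estimate on $M_{\txta,1}^{+}$ (integrated in the reverse direction) yields $K_2$.

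The main obstacle I anticipate is transferring these on-manifold bounds to the full strips $R_1, R_2$ rather than just the reference trajectories on $M_{\txta,1}^{\pm}$: one needs a short bootstrap showing that $u_k$ stays small throughout the passage (via a crude per-step bound from the recursion above), after which the $\epsilon_1$-dynamics along any nearby trajectory remain close enough to those on the manifold that the summability $\sum_k h_1^{(k)} \epsilon_1^{(k)} = \mathcal{O}(1)$ carries over. The naive bound $\sum_k h_1^{(k)} \epsilon_1^{(k)} \leq N \cdot \nu \cdot \delta = \mathcal{O}(1)$ is already borderline but leaves the implied constant in $K_i$ ill-controlled; the logarithmic bound via \eqref{F_manifold} gives a clean universal value.
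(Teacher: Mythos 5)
Your proposal is correct and takes essentially the same route as the paper: the paper's proof consists of citing Lemma~\ref{transitiontime_K1} for the lower bound $N \geq \frac{1}{17\gamma\nu\delta}$, the stable eigenvalue $1-2h_1$ at the fixed points in $S_{\txta,1}^{\pm}$, and ``standard perturbation arguments'' (with the second assertion handled by the symmetry between $M_{\txta,1}^{-}$ and $M_{\txta,1}^{+}$). Your exact one-step recursion for $u_k$, the per-step factor $(1-2h_1)(1+\mathcal{O}(h_1\epsilon_1)+\mathcal{O}(h_1 u_k))$, and the telescoping bound $\sum_k h_1^{(k)}\epsilon_1^{(k)} = \mathcal{O}(1)$ absorbed into $K_1, K_2$ are precisely a worked-out instance of those perturbation arguments, supplying details the paper leaves implicit.
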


\begin{proof}
The statement about $\Pi_{1,-}$ follows from Lemma~\ref{transitiontime_K1} and the fact 
that the stable eigenvalue at the fixed points in $S_{\txta,1}^- \subset M_{\txta,1}^-$ 
is given by $1 - 2h_1 \geq 1 - \nu$, in combination with standard perturbation arguments.

The estimate for $\Pi_{1,+}$ uses the symmetry of system~\eqref{K1dynamics} with respect 
to the dynamics around $M_{\txta,1}^-$ and $M_{\txta,1}^+$: the transition time from 
$\Sigma_{1,+}^{\textnormal{in}}$ to $\Sigma_{1,+}^{\textnormal{out}}$ is of the same order 
as the transition time from $\Sigma_{1,-}^{\textnormal{in}}$ to $\Sigma_{1,-}^{\textnormal{out}}$, 
and the eigenvalues at $S_{\txta,1}^+$ are the same as at  $S_{\txta,1}^-$.
\end{proof}

\subsection{Dynamics in the scaling chart $K_2$} 
\label{secK2}

We turn to analyzing the dynamics in the scaling chart $K_2$ in order to understand the 
behaviour of trajectories past the origin. The chart $K_2$ covers the upper part of the 
sphere, where we can desingularize with respect to $\epsilon$. Recall from~\eqref{kappa12d} 
that the change of coordinates from $K_1$ to $K_2$ is given by
$k_{12}: K_1 \to K_2$ with
\begin{equation*}
x_2 = - \epsilon_1^{-1/2}, \quad y_2 =\epsilon_1^{-1/2} y_1, \quad 
r_2 = \epsilon_1^{1/2} r_1, \quad h_2 = \epsilon_1^{1/2} h_1\,,
\end{equation*}
It becomes clear from this transformation that the set of interest can be restricted to
$$ D_2 := \bigg\{(x_2,y_2,r_2,h_2)  \in \mathbb{R}^4 \, :\, \delta^{1/2}\frac{\rho}{2}
 \leq r_2 \leq \delta^{1/2}\rho, \  \delta^{1/2}\frac{\nu}{2} \leq h_2 \leq \delta^{1/2}\nu  \bigg\}\,.$$
First of all, we need to make sure that $\kappa_{1,2} \left(\Pi_{1,-} \left( R_1\right) \right) 
\subset \Sigma_2^{\textnormal{in}}$ for the entering set $\Sigma_2^{\textnormal{in}}$. 
From the analysis in $K_1$ we derive that this is satisfied for
\begin{align} \label{Sigma_2_in}
\Sigma_2^{\textnormal{in}} := \bigg\{(x_2, y_2,r_2, h_2) \in D_2 \, :\, &
 -(\delta(1 - 2 \nu))^{-1/2} \leq  x_2 \leq -\delta^{-1/2}, \nonumber \\
 &\delta^{-1/2}(-1 - \beta_2(\lambda)) \leq y_2 \leq \delta^{-1/2}(-1 + \hat \beta_2)  \bigg\}\,,
\end{align}
where
\begin{equation} \label{betas2}
\hat \beta_2 := \left| \lambda - 1\right| \delta\,, \quad \beta_2(\lambda):= 
\begin{cases} \frac{\lambda}{8} \delta &\text{ if } 0 < \lambda < 1 \\
\frac{2 \lambda -1}{4} \delta &\text{ otherwise.}
\end{cases}  
\end{equation}
We derive the desingularized equations and thereby justify the choice of blow-up 
in $h$. Observe that $\tilde r_2 = r_2$ since $\tilde \epsilon = \epsilon$ and 
$ \epsilon = r_2^2$. Similarly, we have $\tilde h_2 = h_2$. Furthermore observe that
$$ \tilde y_2 = \frac{\tilde y}{\tilde r_2} = \frac{y + \epsilon h}{\tilde r_2} = 
\frac{r_2 y_2 + r_2^2 h_2 r_2^{-1}}{ r_2} = y_2 + h_2.$$
In addition to that, we obtain
$$ \tilde x_2 = \frac{\tilde x}{\tilde r_2} =  \frac{r_2 x_2 + 
h_2 r_2^{-1}(r_2^2x_2^2 - r_2^2 y_2^2 + \lambda r_2^2}{ r_2} = x_2 + 
h_2(x_2^2 - y_2^2 + \lambda).$$
Hence, summarising, the dynamics in chart $K_2$ are given by iterating the map
\begin{align} \label{K2_discrete}
\tilde x_2 &= x_2 + h_2(x_2^2 - y_2^2 + \lambda)\,, \nonumber \\
\tilde y_2 &=  y_2 + h_2\,, \nonumber \\
\tilde r_2 &= r_2\,, \nonumber \\
\tilde h_2 &= h_2\,.
\end{align}
The transition areas from $K_2$ to another chart depend on $\lambda$. 
For $\lambda < 1$, we will return to chart $K_1$. Recall from~\eqref{kappa21d} 
that the change of coordinates $k_{21}: K_2 \to K_1$ is given by
\begin{equation*}
\epsilon_1 = x_2^{-2}, \quad  y_1 = -x_2^{-1} y_2, \quad r_1 = - x_2 r_2, \quad h_1 = - x_2 h_2,
\end{equation*}
We need to choose $\Sigma_{2,\txta }^{\textnormal{out}}$ and the cuboid $R_2$ in $\Sigma_{1,+}^{\textnormal{in}}$ in chart $K_1$ (see \eqref{R2}) such that, firstly, trajectories starting in $\Sigma_2^{\textnormal{in}}$ reach $\Sigma_{2,\txta }^{\textnormal{out}}$ and, secondly,
$k_{21}(\Sigma_{2,\txta }^{\textnormal{out}}) \subset R_2 $.  It turns out (see proof of 
Proposition~\ref{propK2} for the first criterion) that a suitable 
choice is given by 
\begin{align}\label{Sigma_2a_out}
\Sigma_{2,\txta }^{\textnormal{out}} :=  \bigg\{(x_2, y_2,r_2, h_2) \in D_2 \, 
:\,&  - \delta^{-1/2} - \frac{h_2}{2} \leq x_2 \leq - \delta^{-1/2} + \frac{h_2}{2}, \nonumber \\
& \delta^{-1/2} ( 1-  \hat \beta_2^+)\leq y_2 \leq \delta^{-1/2}(1 + \beta_2^+) \bigg\}\,,
\end{align}
where we define $\beta_2^+ : = \frac{\left|\lambda  + 1\right|}{2}  \delta$ and 
$\hat \beta_2^+ : = \frac{\left|\lambda\right|  + 1}{2}  \delta$; see also Figure~\ref{fig:transcrit}. The second criterion is then satisfied by adapting $\Sigma_{1,+ }^{\textnormal{in}}$  in the $(\epsilon_1, r_1, h_1)$-components accordingly via $k_{21}$ and choosing, for example, $\beta_1^+ : = \frac{3\left|\lambda  + 1\right|}{4}  \delta$ and 
$\hat \beta_1^+ : = \frac{3(\left|\lambda\right|  + 1)}{4}  \delta$ in the definition of $R_2$.

For $\lambda > 1$, we set the area of exit as
\begin{align}\label{Sigma_2e_out}
\Sigma_{2,\txte }^{\textnormal{out}} :=  \{(x_2, y_2,r_2, h_2) \in D_2 \, :\,& 
\delta^{-1/2}  \leq x_2 \leq  \delta^{-1/2} + h_2(\lambda + \delta^{-1})\,, \nonumber \\
& 0 \leq y_2 < \Omega(\lambda)\delta^{-1/6} \}\,,
\end{align} 
where $\Omega(\lambda) > 0$ is a constant for fixed $\lambda$; see also Figure~\ref{fig:transcrit}. In the situation of 
continuous time, the $y_2$-component in $\Sigma_{2,\txte }^{\textnormal{out}}$ can be 
bounded by a constant independent from $\delta$, by using the Riccati 
equation \cite[Proposition~2.3]{ks2011}. As we do not have such a tool in the case of maps, 
we give an estimate from a first oder expansion in $h$ of the iterated maps (see proof of 
Proposition~\ref{propK2}).

Let us denote the sequence induced by iterating \eqref{K2_discrete} for some initial 
condition $(x_0,y_0)$ as $(x_2(n), y_2(n))$ for $n \in \mathbb N$, and call such a 
sequence a trajectory. As for continuous time, the special case is the canard problem, 
i.e. when $\lambda =1$. In this case, for any $c=x_0 = y_0 \in \mathbb{R}$ the system 
of maps has the obvious solution $\gamma_2^c(n)$ with $x_2(n) = y_2(n) = c + n h_2$. 
For $\lambda \neq 1$ we can make the following direct observations about the dynamics 
of the maps, where we recall that $\nu := \rho h < \delta$, in particular $\nu < \frac{1}{8}$.

\begin{figure}[H]
\centering
\begin{subfigure}{.5\textwidth}
        \centering
  		\begin{overpic}[width=.9\textwidth]{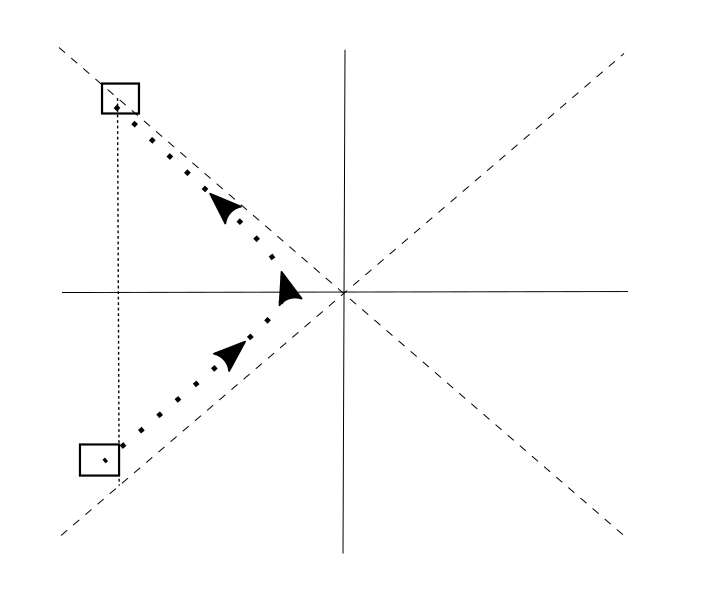}            
                \put(6,25){ $\Sigma_{2}^{\textnormal{in}}$}
                \put(13,77){ $\Sigma_{2,\txta }^{\textnormal{out}}$}
                \put(17,39){\scriptsize $- \delta^{-1/2}$}
                \put(47,82){\scriptsize $y_2$}
                \put(91,43){\scriptsize $x_2$}
        \end{overpic}
        \caption{$\lambda < 0$}
  \label{lambda_negative}
\end{subfigure}%
\begin{subfigure}{.5\textwidth}
        \centering
  		\begin{overpic}[width=.9\textwidth]{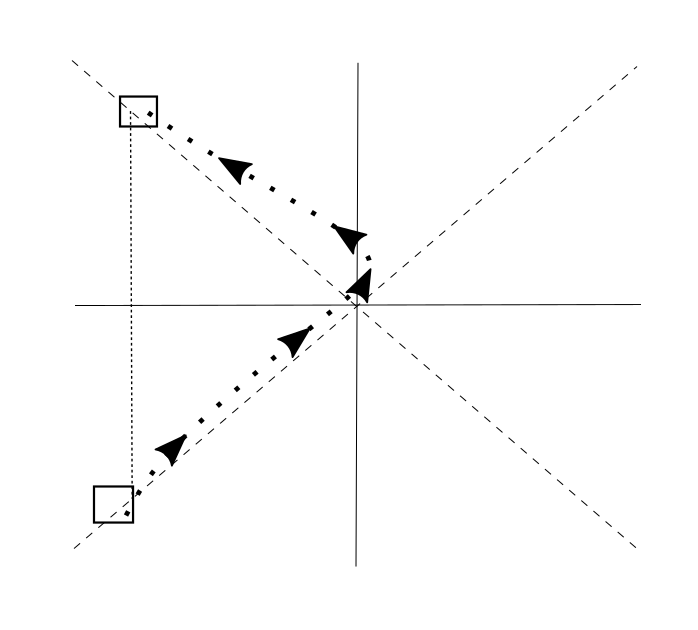}            
 \put(9,23){ $\Sigma_{2}^{\textnormal{in}}$}
                \put(15,80){ $\Sigma_{2,\txta }^{\textnormal{out}}$}
                \put(19,41){\scriptsize $- \delta^{-1/2}$}
                \put(49,84){\scriptsize$y_2$}
                \put(95,44){\scriptsize $x_2$}
        \end{overpic}
        \caption{$0 < \lambda < 1$}
  \label{lambda_small}
\end{subfigure}%
\newline
\begin{subfigure}{.5\textwidth}
        \centering
  		\begin{overpic}[width=.9\textwidth]{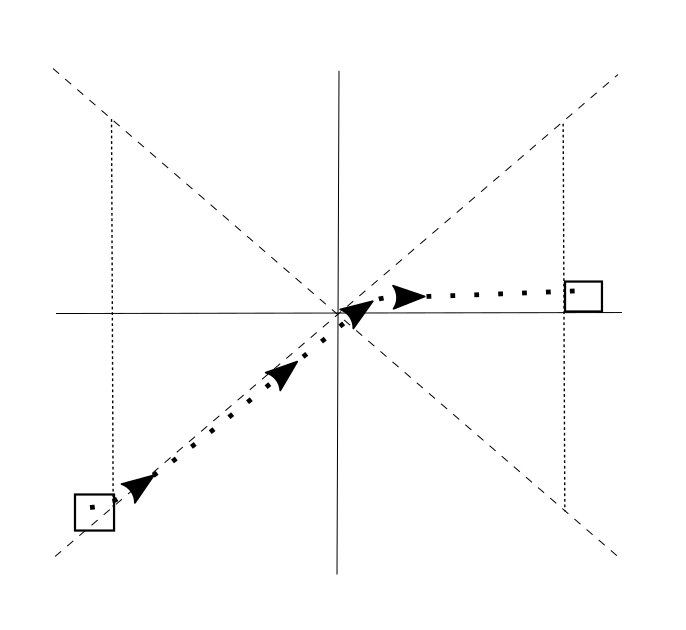}            
                \put(6,22){ $\Sigma_{2}^{\textnormal{in}}$}
                \put(84,53){ $\Sigma_{2,\txte }^{\textnormal{out}}$}
                \put(17,38){\scriptsize $- \delta^{-1/2}$}
                \put(72,38){\scriptsize $ \delta^{-1/2}$}
                \put(48,84){\scriptsize $y_2$}
                \put(91,43){\scriptsize $x_2$}
        \end{overpic}
        \caption{$\lambda > 1$}
  \label{lambda_large}
\end{subfigure}
\caption{Typical trajectories (dotted lines) in the $K_2$ chart, i.e.~solutions of~\eqref{K2_discrete} 
starting in $\Sigma_{2}^{\textnormal{in}}$ for different values of 
$\lambda$, as ascertained in the proof of Proposition~\ref{propK2}. For $\lambda < 0$ (a) the $y_2$-axis is not crossed, as opposed to the case $0 < \lambda < 1$ (b), but in both situations the rectangle $\Sigma_{2,\txta }^{\textnormal{out}}$ is reached close to the diagonal (dashed lines), specifically $(x_2,y_2)=(- \delta^{-1/2}, - \delta^{-1/2})$.  For $\lambda > 1$ (c), trajectories reach the rectangle $\Sigma_{2,\txte }^{\textnormal{out}}$ close to $\{x_2 = \delta^{-1/2}\}$ above the $x_2$-axis. }
\label{fig:transcrit}
\end{figure}

\begin{proposition} 
\label{propK2}The following results hold:
\begin{enumerate}
\item[(P1)] If $\lambda < 1$, every trajectory starting in $\Sigma_2^{\textnormal{in}}$ 
passes through $\Sigma_{2,\txta }^{\textnormal{out}}$. 
\item[(P2)] If $\lambda > 1$, every trajectory starting in $\Sigma_2^{\textnormal{in}}$ 
passes through  $\Sigma_{2,\txte }^{\textnormal{out}}$.
\end{enumerate}
\end{proposition}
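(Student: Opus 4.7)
The scaling-chart map~\eqref{K2_discrete} has a remarkably rigid structure: $r_2$ and $h_2$ are preserved exactly, $y_2$ advances deterministically as $y_2(n) = y_2(0) + nh_2$, and the $x_2$-recurrence
\[
x_2(n+1) = x_2(n) + h_2\bigl(x_2(n)^2 - y_2(n)^2 + \lambda\bigr)
\]
is exactly the explicit Euler discretization (with step $h_2$) of the Riccati equation $\mathrm{d}x/\mathrm{d}y = x^2 - y^2 + \lambda$. The plan is to exploit this one-dimensional reduction throughout: track $x_2$ as a function of the time-like variable $y_2$ and compare with the continuous Riccati flow, whose qualitative features (attracting slow manifold for $\lambda<1$, finite-time blow-up for $\lambda>1$) are well understood from~\cite{ks2001/2,ks2011}.

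For~(P1) I would construct a forward-invariant trapping region around the attracting slow manifold $x = -\sqrt{y^2 - \lambda}$ (defined where $y^2 \geq \lambda$). Invariance is checked by substituting the bounding curves into the map and Taylor-expanding in $h_2$; the key ingredient is that the linearised contraction factor normal to the slow manifold is $|1 + 2h_2 x_2| < 1$, valid because $2h_2|x_2| \leq 2h_2\delta^{-1/2} = 2\nu < 1$. When $0 < \lambda < 1$ the gap $|y|<\sqrt\lambda$ contains no slow manifold, but there $F(x,y) := x^2 - y^2 + \lambda$ is bounded below by $\lambda - y^2 > 0$, forcing $x_2$ to increase monotonically; the gap is crossed in at most $2\sqrt\lambda/h_2$ steps with per-step increment $O(h_2)$, so the $x_2$-excursion stays $O(1)$ and $x_2$ is re-attracted to the right half of the slow manifold upon exiting the gap. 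The target $\Sigma_{2,\txta}^{\textnormal{out}}$ is an $O(h_2)$-thin parallelogram tangent to the slow manifold at $y_2 \approx \delta^{-1/2}$; since the trapped trajectory is $O(h_2)$-close to the slow manifold once the normal contraction has acted for $O(\log(1/\delta)/\nu)$ steps -- negligible compared to the total passage time $O(1/(\delta\nu))$ -- and since $y_2$ inevitably sweeps through the window $[\delta^{-1/2}(1-\hat\beta_2^+),\delta^{-1/2}(1+\beta_2^+)]$, the trajectory must enter $\Sigma_{2,\txta}^{\textnormal{out}}$ at some step.

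For~(P2) the trajectory follows the lower attracting branch until $y_2 \approx -\sqrt\lambda$, then enters the gap $|y_2|<\sqrt\lambda$ where $F>0$ drives $x_2$ upward. Because $\lambda>1$, this drive is strong enough that $x_2$ becomes positive before $y_2$ reaches $\sqrt\lambda$; once $x_2>0$ and $|y_2|<\sqrt\lambda$ one has $\tilde x_2 \geq x_2 + h_2 x_2^2$, the discrete analogue of the finite-time Riccati blow-up at some $y^\ast(\lambda)=O(1)$. I would estimate this growth phase via a first-order expansion in $h_2$ of the iterated map and show that, once $x_2 \geq \delta^{-1/2}$, the corresponding $y_2$ satisfies $0 \leq y_2 < \Omega(\lambda)\delta^{-1/6}$. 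The main obstacle is precisely this sharp $\delta^{-1/6}$ bound: unlike in continuous time the discrete iteration never actually diverges, so the exit $y_2$ overshoots $y^\ast$ by an amount that depends on the accumulated Euler error in the singular regime where the linearised rate $2h_2|x_2|$ grows but stays subcritical as long as $|x_2|<1/h_2$. Careful bookkeeping of this error delivers the required $y_2$ bound at exit.
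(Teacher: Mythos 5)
There is a genuine gap in your argument for (P1), and it sits exactly where the hypothesis $\lambda<1$ must do its work. Your plan consists of (i) a trapping region around the attracting branch $x=-\sqrt{y^2-\lambda}$ and (ii) monotone drift across the gap $|y_2|<\sqrt{\lambda}$ with an $O(1)$ excursion in $x_2$. But an $O(1)$ excursion is not enough: just past the gap the repelling branch $x=+\sqrt{y^2-\lambda}$ is still $O(1)$-close to zero (it equals $0$ at $y=\sqrt{\lambda}$), so a trajectory whose $x_2$ has drifted to a positive $O(1)$ value could exit the gap on the repelling side and run off to $x_2\to+\infty$ --- which is precisely what \emph{does} happen when $\lambda>1$. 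Nothing in your (P1) sketch distinguishes $\lambda=0.9$ from $\lambda=1.1$: in both cases the gap is crossed in $O(\sqrt{\lambda}/h_2)$ steps with $O(h_2)$ increments, and ``re-attracted to the right half of the slow manifold upon exiting the gap'' is asserted, not proved. The missing ingredient is a barrier argument at the diagonal $\{x_2=y_2\}$: on that line the Riccati slope is $\lambda$, so for $\lambda<1$ a trajectory with $x_{2,n}\le y_{2,n}$ cannot cross to $x_2>y_2$, and the strip between the diagonal and the repelling branch (whose slope exceeds $1$) is then swept back towards $S_{\txta,2}^{+}$. This is the content of the paper's Lemma~\ref{lambda01} (mirrored by Lemma~\ref{lambda1big} for $\lambda>1$), and in discrete time the forward invariance of $\{x_2\le y_2\}$ is \emph{not} automatic: the map can jump across the diagonal, and the paper's criterion~\eqref{x2biggery2} shows invariance requires the quantitative condition $h_2\left|x_{2,n}+y_{2,n}\right|<1$, which is where $\nu=\rho h<\delta$ enters. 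Your fixed-$y$ linearisation $|1+2h_2x_2|<1$ does not substitute for this; moreover your trapping region degenerates at the fold points $y=\pm\sqrt{\lambda}$, where the branch has vertical tangency, and the case $\lambda\le 0$ (allowed by Theorem~\ref{transcritical_discrete} and treated separately in the paper) is not addressed at all by your gap discussion.

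Your plan for (P2), by contrast, essentially coincides with the paper's proof --- modulo the same missing diagonal lemma needed to guarantee that trajectories get and stay below $\{x_2=y_2\}$, with $x_{2,n}-y_{2,n}$ growing by at least $(\lambda-1)h_2$ per step. Your ``first-order expansion in $h_2$ of the iterated map'' is exactly the paper's device: expanding to order $h_2^3$ and summing $\sum_k k^2$ yields $x_{2,m+n}\ge \lambda y_{2,m+n}+\tfrac16(\lambda^2-1)y_{2,m+n}^3$, whence $y_2=\mathcal{O}(\delta^{-1/6})$ at exit; the paper even makes your observation that the continuous-time Riccati comparison, which would give an $O(1)$ bound, is unavailable for maps. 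One correction of diagnosis: the $\delta^{-1/6}$ is not an ``overshoot of the blow-up time $y^{\ast}$ by accumulated Euler error''; it is simply what the uniform cubic lower bound on growth yields when you stop the iteration at $x_2=\delta^{-1/2}$, and it explains why $\Sigma_{2,\txte}^{\textnormal{out}}$ is defined with the $\delta^{-1/6}$-window in the first place. You should also verify explicitly (as the paper does, and as you do implicitly for $\Sigma_{2,\txta}^{\textnormal{out}}$) that the exit windows cannot be jumped over: while $x_2\le\delta^{-1/2}$ the per-step increment is at most $h_2(\delta^{-1}+\lambda)$, matching the width of the $x_2$-window in~\eqref{Sigma_2e_out}.
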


The proof of this proposition is based on a couple of lemmas, which are shown in the 
following. We divide the diagonals $\{x=y\}$ and $\{x = -y\}$ into the subsets
$$S_{\txta,2}^-:=\{(x,y) \in \mathbb{R}^2 \,:\, y \leq 0, \ x = y\}\,, \quad S_{\txta,2}^{+} 
:= \{(x,y) \in \mathbb{R}^2 \,:\, y \geq 0, \ x = -y\} $$
and
$$S_{\txtr,2}^-:=\{(x,y) \in \mathbb{R}^2 \,:\, y \leq 0, \ x = -y\}\,, \quad S_{\txtr,2}^{+} 
:= \{(x,y) \in \mathbb{R}^2 \,:\, y \geq 0, \ x = y\}\,. $$
Furthermore, we write as a shorthand $x_{2,n} = x_2(n), y_{2,n} = y_2(n)$ for $n \in \mathbb{N}$ 
and investigate the behaviour of the trajectories
\begin{align*}
 y_{2,n+1} &= y_{2,n}  + h_2\,, \\
 x_{2,n+1} &= x_{2,n} + h_2 \lambda + h_2 (x_{2,n}^2 - y_{2,n}^2)
\end{align*} 
for different values of $\lambda$. In fact, even for $\lambda<1$, there are subtle differences
in the paths of trajectories (see Figure~\ref{fig:transcrit}).

\begin{lemma} 
\label{lambda01}The following cases occur for $\lambda<1$:
\begin{itemize}
\item Let $ 0 < \lambda < 1$ and $\delta$ be sufficiently small. Then any trajectory 
starting in $\Sigma_2^{\textnormal{in}}$ is strictly increasing in $x$ as long as 
$x_{2,n}, y_{2,n} < 0$, and will be above the diagonal $\{x = y\}$ at a certain point 
of time and stay there forever afterwards. In particular, if $(x_{2,0}, y_{2,0})$ in 
$\Sigma_2^{\textnormal{in}}$ with $y_{2,0} < x_{2,0} < 0$, there exists $n^* \in \mathbb{N}$ 
such that $x_{2,n^*}  \leq y_{2,n^*} < 0$ and
$$\frac{n^* h_2}{n^*h_2 + \frac{\delta^{1/2}}{8}} \geq \lambda\,.$$
\item If $\lambda \leq 0$, any trajectory starting in $\Sigma_2^{\textnormal{in}}$ is strictly 
increasing in $x$ as long as $x_{2,n}, y_{2,n} < 0$ and will be above the diagonal $\{x = y\}$ 
at any point of time.
\end{itemize}
\end{lemma}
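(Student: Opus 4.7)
The argument hinges on the signed distance to the diagonal, $d_n := y_{2,n} - x_{2,n}$. Substituting into~\eqref{K2_discrete} and using the identity $x^2-y^2=-d(x+y)$ produces the paired updates
\[
d_{n+1} = d_n\bigl[1+h_2(x_{2,n}+y_{2,n})\bigr] + h_2(1-\lambda),\qquad x_{2,n+1}-x_{2,n}=h_2\bigl[\lambda - d_n(x_{2,n}+y_{2,n})\bigr].
\]
Since $h_2\leq\nu\delta^{1/2}$ with $\nu<\delta<1/8$, and since the relevant region gives $|x_{2,n}+y_{2,n}|\leq 2\delta^{-1/2}(1+\mathcal{O}(\delta))$, the multiplier $1+h_2(x_{2,n}+y_{2,n})$ lies in $(0,1)$ throughout, while $y_{2,n}=y_{2,0}+nh_2$ evolves linearly; the non-trivial dynamics sit entirely in $d_n$.

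I would treat $\lambda\leq 0$ first. The choices of $\beta_2,\hat\beta_2$ in~\eqref{betas2} give $d_0\geq\delta^{1/2}(1-2\lambda)/4>0$ uniformly on $\Sigma_2^{\textnormal{in}}$, so trajectories start strictly above the diagonal. An induction on the $d$-recursion shows that $d_n$ remains bounded below by a positive constant of order $\delta^{1/2}$: the multiplier is strictly less than $1$, but the additive constant $h_2(1-\lambda)\geq h_2$ compensates, as one sees by comparing $d_n$ to the (positive) fixed point of the instantaneous affine map. Plugging into the $x$-recursion, $\lambda-d_n(x_{2,n}+y_{2,n})\geq\lambda+d_0|x_{2,n}+y_{2,n}|\geq\lambda+(1-2\lambda)/2-\mathcal{O}(\delta)=1/2-\mathcal{O}(\delta)>0$, which yields strict monotonicity of $x_{2,n}$ whenever both coordinates remain negative, and simultaneously $d_n>0$ at every iterate, giving the second bullet of the lemma.

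For $0<\lambda<1$ starting below the diagonal ($y_{2,0}<x_{2,0}<0$), the choice $\beta_2(\lambda)=\lambda\delta/8$ in~\eqref{betas2} forces $|d_0|\leq\lambda\delta^{1/2}/8$. In the below-diagonal regime $d_n<0$ with $x_{2,n}+y_{2,n}<0$, the cross-term in the $d$-recursion is non-negative, whence $d_{n+1}-d_n\geq h_2(1-\lambda)$ and hence $d_n\geq d_0+nh_2(1-\lambda)$. This forces the existence of a crossing index, and I would select $n^*$ inside the admissible window, bounded above by $\lfloor -y_{2,0}/h_2\rfloor$ so that $y_{2,n^*}<0$, large enough that $n^*h_2(1-\lambda)\geq\lambda\delta^{1/2}/8$; rearranging yields the stated inequality $n^*h_2/(n^*h_2+\delta^{1/2}/8)\geq\lambda$. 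This choice is feasible because the window has length $\mathcal{O}(\delta^{-1/2}/h_2)$, much larger than the lower bound required. The strict monotonicity of $x_{2,n}$ on the below-diagonal segment uses $|d_n(x_{2,n}+y_{2,n})|\leq|d_0|\cdot 2\delta^{-1/2}\leq\lambda/4$, so $\lambda-d_n(x_{2,n}+y_{2,n})\geq 3\lambda/4-\mathcal{O}(\delta)>0$; once the trajectory crosses to $d_n\geq 0$, both summands of the $d$-recursion are non-negative, so $d_n$ cannot dip back below zero and the $\lambda\leq 0$ argument takes over.

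The main obstacle is the scale mismatch between the narrow initial gap $|d_0|=\mathcal{O}(\delta^{1/2})$ and the much longer excursion in $y$, of size $\mathcal{O}(\delta^{-1/2})$; careful bookkeeping within the wide admissible window for $n^*$ is needed to extract exactly the quantitative inequality in the form stated by the lemma, rather than a cleaner but slightly weaker variant arising from the crude linear bound alone.
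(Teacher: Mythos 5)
Your reformulation in terms of the signed distance $d_n=y_{2,n}-x_{2,n}$ is a clean repackaging of the paper's computations, and the case $0<\lambda<1$ is correct and essentially identical to the paper's proof: below the diagonal the cross term $h_2 d_n(x_{2,n}+y_{2,n})$ is non-negative, so $d_{n+1}-d_n\geq h_2(1-\lambda)$, the choice $\beta_2(\lambda)=\lambda\delta/8$ in~\eqref{betas2} gives $|d_0|\leq\lambda\delta^{1/2}/8$, and rearranging $n^*h_2(1-\lambda)\geq\lambda\delta^{1/2}/8$ yields exactly the stated inequality; your feasibility check inside the window of length $\mathcal{O}(\delta^{-1/2}/h_2)$ replaces the paper's contradiction argument, and your positivity of the multiplier $1+h_2(x_{2,n}+y_{2,n})$ is precisely the paper's criterion~\eqref{x2biggery2}. (Minor point: the multiplier lies in $(0,1)$ only while $x_{2,n}+y_{2,n}<0$; afterwards it exceeds $1$, but sign preservation of $d_n$ is all you use, so this is harmless.)

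The genuine gap is in the $\lambda\leq 0$ case, in the claimed monotonicity of $x_{2,n}$. Your estimate $\lambda-d_n(x_{2,n}+y_{2,n})\geq\lambda+d_0|x_{2,n}+y_{2,n}|\geq 1/2-\mathcal{O}(\delta)$ requires $|x_{2,n}+y_{2,n}|\geq 2\delta^{-1/2}(1-\mathcal{O}(\delta))$, which holds only at entry: since both coordinates strictly increase in the third quadrant, $|x_{2,n}+y_{2,n}|$ shrinks to order one before $y_{2,n}$ reaches $0$ (at exit $|x_{2,n}|$ is comparable to $\sqrt{y_{2,n}^2+|\lambda|}$), so with a lower bound on $d_n$ of order $\delta^{1/2}$ the product $d_n|x_{2,n}+y_{2,n}|$ becomes $\mathcal{O}(\delta^{1/2})$ and cannot dominate $|\lambda|$ for fixed $\lambda<0$. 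No static lower bound on $d_n$ can close this: the mechanism that keeps $\xi_n:=x_{2,n}^2-y_{2,n}^2+\lambda$ positive is dynamic. Indeed, your own recursion gives $d_{n+1}-d_n=h_2(1-\xi_n)$, so as $\xi_n$ drops towards $0$ the distance to the diagonal grows at essentially full speed $h_2$ per step, pushing the trajectory back outside the hyperbola $\{\xi=0\}$; a fixed-point comparison against the moving threshold $|\lambda|/|x_{2,n}+y_{2,n}|$ also fails near the corner $(-\sqrt{|\lambda|},0)$. The paper handles this by proving forward invariance of $\{\xi_n>0\}$ directly while $x_{2,n}<y_{2,n}<0$, via
\begin{equation*}
\xi_{n+1}=\xi_n+h_2^2(\xi_n^2-1)+2h_2\left(\left|y_{2,n}\right|-\xi_n\left|x_{2,n}\right|\right),
\end{equation*}
which is positive for $h_2$ small whenever $|y_{2,n}|\geq h_2$, the remaining case $|y_{2,n}|<h_2$ being vacuous because then $y_{2,n+1}\geq 0$ and the ``as long as'' clause terminates. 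You would need to replace your uniform product bound by such an invariance argument to repair the second bullet.
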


\begin{proof}
We start with the case $ 0 < \lambda < 1$. Consider an initial condition $(x_{2,0}, y_{2,0})$ 
in $\Sigma_2^{\textnormal{in}}$ with $y_{2,0} < x_{2,0} < 0$, i.e., below $S_{\txta,2}^-$. We 
obviously have $x_{2,1} < x_{2,0} + \lambda h_2$. Furthermore, observe that
\begin{align*}
x_{2,0}^2 - y_{2,0}^2 + \lambda &\geq  \delta^{-1}  - ( \delta^{-1/2}(-1 - \beta_2(\lambda)))^2 + \lambda \\
&= \delta^{-1}  - \delta^{-1} - \frac{1}{4} \lambda - \frac{1}{64} \lambda^2 \delta + \lambda \\
& = \frac{3}{4} \lambda - \frac{1}{64} \lambda^2 \delta \geq \frac{1}{2} \lambda.
\end{align*}
Hence, $x_{2,1} \geq  x_{2,0} + \frac{\lambda}{2}  h_2$. Either we already have $ x_{2,1}  
\leq y_{2,1} < 0$. If not, we can infer from the facts $ 0 > y_{2,1} > y_{2,0}$, $ 0 > x_{2,1} 
> x_{2,0}$ and $  x_{2,1} - y_{2,1} < x_{2,0} - y_{2,0}$  that $y_{2,1}^2 - x_{2,1}^2  <  
y_{2,0}^2 - x_{2,0}^2$. Hence, we have $x_{2,0} + \lambda h_2 < x_{2,2} < x_{2,0} + 2 \lambda h_2$ 
and obviously $ y_{2,2} = y_{2,0} + 2 h_2$. Therefore, we see inductively that for $0 > x_{2,n}  
> y_{2,n}$ both sequences are increasing and we either already have $ x_{2,n+1}  \leq y_{2,n+1} < 0$ or 
\begin{align*}
x_{2,n+1} - y_{2,n+1} &<  x_{2,0} - y_{2,0} -(n+1)(1- \lambda)h_2 < \frac{\lambda}{8} 
\delta^{1/2} -(n+1)(1- \lambda)h_2 \\
&= \lambda \left( (n+1) h_2 + \frac{\delta^{1/2}}{8}  \right) - (n+1) h_2.
\end{align*}
Thus, we can conclude that $x_{2,n^*}  \leq y_{2,n^*} < 0$ for some $n^*$ such that $\lambda 
\leq \frac{n^* h_2}{n^*h_2 + \frac{\delta^{1/2}}{8}}$, if $\delta$ is chosen small enough such 
that $\frac{\delta^{-1/2}}{\delta^{-1/2} + \frac{\delta^{1/2}}{8}} > \lambda$. Namely, if there 
was $\hat n \in \mathbb{N}$ such that $0 > x_{2,n}  > y_{2,n}$ for all $n < \hat n$ and $x_{2,\hat n} 
> 0$, we would have, for $h_2, \delta$ small enough, that $\frac{\hat n -1}{\hat n} \geq \lambda$ 
and would obtain 
$$\frac{(\hat n-1) h_2}{  (\hat n -1) h_2 + \frac{\delta^{1/2}}{8}} \geq \frac{\lambda 
\hat n h_2}{ \lambda \hat n h_2 + \frac{\delta^{1/2}}{8}} >  \frac{\delta^{-1/2}}{\delta^{-1/2}  
+ \frac{\delta^{1/2}}{8}} > \lambda\,,$$
which is a contradiction, since this would imply $x_{2, \hat n -1}  < y_{2,\hat n -1} $ with the above.

Assume now that, at time $n \in \mathbb{N}$, the trajectory is above the diagonal $\{x = y\}$, 
i.e., $x_{2,n} \leq y_{2,n}$. In particular, this covers the case when $x_{2,n} \leq y_{2,n} < 0$, when 
the trajectory lies above $S_{\txta,2}^-$, which is relevant for the initial data.
We have  $y_{2,n+1} = y_{2,n}  + h_2$ and 
 $x_{2,n+1} \geq x_{2,n} + h_2 \lambda$.
If $x_{2,n} = y_{2,n}$, then obviously $ x_{2,n+1} <  y_{2,n+1}$. If $x_{2,n} < y_{2,n}$, we observe that 
\begin{equation} \label{x2biggery2}
 x_{2,n+1} \geq y_{2,n+1} \quad \text{iff} \quad 1 + \frac{h_2(1- \lambda)}{y_{2,n}-x_{2,n}} \leq - h_2 ( x_{2,n} + y_{2,n} )\,.
\end{equation}
Hence, since $ h_2 \left| x_{2,n} + y_{2,n} \right| <  2 (1 - 2 \nu)^{-1/2} \delta^{-1/2} \delta^{1/2} 
 \nu < 1 $, we have $ x_{2,n+1} <  y_{2,n+1}$ and the argument goes on inductively. We can also see that, 
for $x_{2,n} \leq y_{2,n} < 0$, the trajectories stay close to $S_{\txta,2}^-$ since, if $ x_{2,n}^2 - 
y_{2,n}^2 > 1 - \lambda$, we have that $ y_{2,n+1} - x_{2,n+1} < y_{2,n} - x_{2,n}$. This concludes the
proof of the first statement.

Next, we consider the case $\lambda \leq 0$. Again, assume that at time $n \in \mathbb{N}$ the trajectory is 
above the diagonal $\{x = y\}$, i.e. $x_{2,n} \leq y_{2,n}$. In particular, this covers the case when 
$x_{2,n} \leq y_{2,n} < 0$ as relevant for the initial data.
If $x_{2,n} = y_{2,n}$, then obviously $ x_{2,n+1} <  y_{2,n+1}$. If $x_{2,n} < y_{2,n}$, we observe as 
before that~\eqref{x2biggery2} holds and that
$ h_2 \left| x_{2,n} + y_{2,n} \right| < 1 $. Hence, we have $ x_{2,n+1} <  y_{2,n+1}$ and the argument 
goes on inductively. Therefore trajectories stay above the diagonal. Furthermore, observe that $y_{2,0}^2 
\leq \left( \delta^{-1/2} \left( -1 - \frac{2\lambda - 1}{4}\delta \right) \right)^2 $ and therefore
$$ x_{2,0}^2 - y_{2,0}^2 + \lambda \geq \delta^{-1} - \delta^{-1} - \lambda + \frac{1}{2} - 
\frac{(2 \lambda -1)^2}{16} \delta + \lambda = \frac{1}{2} - \frac{(2 \lambda -1)^2}{16} \delta\,,$$
which is greater than $0$ for $\delta$ small enough, depending on $\lambda$. Hence, $x_{2,1} > x_{2,0}$. 
We show that $x_{2,n+1} > x_{2,n}$ as long as $x_{2,n} < y_{2,n} < 0$ by proving that $\xi_n := x_{2,n}^2 
- y_{2,n}^2 + \lambda > 0$ implies $x_{2,n+1}^2 - y_{2,n+1}^2 + \lambda > 0$. Assuming that $x_{2,n} < y_{2,n} 
< 0$ and $\xi_n > 0$ yields
\begin{align*}
x_{2,n+1}^2 - y_{2,n+1}^2 + \lambda &= (x_{2,n} + h_2 \xi_n)^2 - (y_{2,n} + h_2)^2 + \lambda \\
&= \xi_n + h_2^2(\xi_n^2 -1) + 2 h_2 ( \left| y_{2,n} \right| - \xi_n \left| x_{2,n} \right|)\,.
\end{align*}
From there, it is easy to observe that for $h_2$ small enough the claim follows.
\end{proof}

Although the argument is quite technical, the proof of the last lemma shows that the key
steps in the scaling chart involve the sign of the nonlinear term for the $x_2$-variable.
This idea can also be carried out in the case $\lambda>1$.

\begin{lemma} 
\label{lambda1big}
Let $\lambda > 1$ and $\delta$, $\nu$ sufficiently small. Then all trajectories starting 
in $\Sigma_{2}^{\textnormal{in}}$ are strictly increasing in $x$ as long as 
$ x_{2,n}, y_{2,n} < 0$, will be below the diagonal $\{x = y\}$ at a certain point of 
time and stay there forever afterwards. In particular, if $(x_{2,0}, y_{2,0})$ in 
$\Sigma_2^{\textnormal{in}}$ with $x_{2,0} < y_{2,0} < 0$, there is a $n^* \in \mathbb{N}$ 
such that $y_{2,n^*} \leq x_{2,n^*} < 0$ and
$$n^* h_2 \geq  \delta^{1/2}\,.$$
\end{lemma}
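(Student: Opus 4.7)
My plan is to mirror the structure of the proof of Lemma~\ref{lambda01}, adapted to the case $\lambda>1$, and to split the argument into three steps: (a) strict increase of $x_{2,n}$ while both coordinates are negative; (b) crossing of the diagonal $\{x=y\}$ from above together with persistence below it afterwards; and (c) the quantitative lower bound $n^*h_2\ge \delta^{1/2}$.

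For step (a) I track $\xi_n:=x_{2,n}^2-y_{2,n}^2+\lambda$, which controls the increment $x_{2,n+1}-x_{2,n}=h_2\xi_n$. Using $|x_{2,0}|\ge \delta^{-1/2}$ and $|y_{2,0}|\le \delta^{-1/2}(1+\beta_2(\lambda))$ together with $\beta_2(\lambda)=O(\delta)$, I get $\xi_0\ge \lambda-O(\delta)>0$ for $\delta$ small. For the induction step I use the same expansion as in the last paragraph of the proof of Lemma~\ref{lambda01},
\begin{equation*}
\xi_{n+1}=\xi_n+h_2^2(\xi_n^2-1)+2h_2\bigl(|y_{2,n}|-\xi_n|x_{2,n}|\bigr),
\end{equation*}
and argue that positivity of $\xi_n$ propagates whenever $x_{2,n},y_{2,n}<0$, provided $h_2$ is small enough. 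This yields $x_{2,n+1}>x_{2,n}$ in the relevant region.

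For step (b) I introduce the gap $d_n:=y_{2,n}-x_{2,n}$, which satisfies
\begin{equation*}
d_{n+1}=d_n\bigl(1+h_2(x_{2,n}+y_{2,n})\bigr)-h_2(\lambda-1).
\end{equation*}
Since $x_{2,n}+y_{2,n}<0$ in the region under consideration and, for $\nu$ small, $h_2|x_{2,n}+y_{2,n}|\le 2\nu(1-2\nu)^{-1/2}<1$, the multiplier lies in $(0,1)$. Whenever $d_n\ge 0$ this forces $d_{n+1}\le d_n-h_2(\lambda-1)$, so $d_n$ decreases by at least $h_2(\lambda-1)$ per step and becomes non-positive after at most $n_{\textnormal{cross}}\le \lceil d_0/(h_2(\lambda-1))\rceil$ steps; the bound $d_0\le \delta^{1/2}\lambda$ derived from the definition of $\Sigma_2^{\textnormal{in}}$ and $\nu<\delta$ in particular yields $n_{\textnormal{cross}}h_2\le \delta^{1/2}\lambda/(\lambda-1)$. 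Once $d_m\le 0$, the same recurrence gives $d_{m+1}\le -h_2(\lambda-1)<0$, so the trajectory remains below the diagonal for all subsequent times.

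For step (c) I set $n^*:=\max\bigl(n_{\textnormal{cross}},\lceil \delta^{1/2}/h_2\rceil\bigr)$. By construction $n^*h_2\ge \delta^{1/2}$ and $n^*\ge n_{\textnormal{cross}}$, so step (b) gives $y_{2,n^*}\le x_{2,n^*}$. It remains to verify $x_{2,n^*}<0$. Writing $\xi_n=d_n s_n+\lambda$ with $s_n:=-(x_{2,n}+y_{2,n})>0$, I run a bootstrap for a uniform bound $|\xi_n|\le C(\lambda)$ over the window $n\le n^*$: since $n^*h_2=O(\delta^{1/2})$, both $|y_n-y_0|$ and $|x_n-x_0|$ are $O(\delta^{1/2})$ (assuming the bound on $\xi$), hence $|d_n|=O(\delta^{1/2})$ and $s_n\le s_0+O(\delta^{1/2})=O(\delta^{-1/2})$, giving $|d_n s_n|=O(1)$; this closes the bootstrap. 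It then follows that $x_{2,n^*}-x_{2,0}=O(\delta^{1/2})$, which is dominated by $-x_{2,0}\ge \delta^{-1/2}$ for $\delta$ small enough. I expect the main obstacle to be the careful bookkeeping of constants in this last step — in particular executing the bootstrap cleanly and confirming that the various $O(\cdot)$-constants remain compatible with the precise form of $\beta_2(\lambda)$ and the hypothesis $\nu<\delta$.
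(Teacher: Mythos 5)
Your proposal is correct and, at its core, follows the same route as the paper's proof: your gap recursion $d_{n+1}=d_n\bigl(1+h_2(x_{2,n}+y_{2,n})\bigr)-h_2(\lambda-1)$ is an exact algebraic repackaging of the paper's criterion ``$y_{2,n+1}\geq x_{2,n+1}$ iff $1+\frac{h_2(\lambda-1)}{x_{2,n}-y_{2,n}}\leq -h_2(x_{2,n}+y_{2,n})$'', and the crossing-plus-persistence argument is the same. Two organisational differences are worth recording. First, for monotonicity the paper splits cases: above the diagonal $\xi_n>\lambda$ is immediate (there $|x_{2,n}|>|y_{2,n}|$), while below the diagonal it maintains $\xi_n\geq \tfrac14$ using monotonicity of the gap and of $|x_{2,n}+y_{2,n}|$; you instead propagate positivity of $\xi_n$ through the recursion $\xi_{n+1}=\xi_n+h_2^2(\xi_n^2-1)+2h_2(|y_{2,n}|-\xi_n|x_{2,n}|)$, i.e.\ the device the paper uses in Lemma~\ref{lambda01} for $\lambda\leq 0$, applied uniformly. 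Second, and more substantively: the \emph{first} crossing time can be arbitrarily small (initial data in $\Sigma_2^{\textnormal{in}}$ just above the diagonal have $d_0$ arbitrarily small and cross in one step), so the $n^*$ of the lemma cannot in general be the first crossing. The paper handles this tersely by taking $n^*$ to be the time at which its linear lower bound on $x_{2,n}-y_{2,n}$ forces the crossing to have occurred, leaving the verification $x_{2,n^*}<0$ implicit; your explicit $n^*=\max\bigl(n_{\textnormal{cross}},\lceil\delta^{1/2}/h_2\rceil\bigr)$ together with the bootstrap $|\xi_n|\leq C(\lambda)$ makes both the timing claim and the sign claim airtight, which is a genuine improvement in transparency over the published argument.

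Two small repairs are needed. (i) Your entry estimate $\xi_0\geq\lambda-O(\delta)$ is miscomputed: with $\beta_2(\lambda)=\frac{2\lambda-1}{4}\delta$ from \eqref{betas2}, the cross term $2\delta^{-1}\beta_2(\lambda)=\frac{2\lambda-1}{2}$ is $O(1)$, not $O(\delta)$, so the correct bound is $\xi_0\geq\frac12-O(\delta)$ (the paper derives $\geq\frac14$); this is harmless, since your argument only uses positivity. (ii) Propagation of $\xi_n>0$ is not automatic from smallness of $h_2$ alone when $\xi_n$ is tiny: the crude estimate $\xi_{n+1}\geq\xi_n(1-2h_2|x_{2,n}|)-h_2^2$ can then fail. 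You need the additional observation that $0<\xi_n<1<\lambda$ forces $x_{2,n}^2<y_{2,n}^2$ and $y_{2,n}^2=x_{2,n}^2+\lambda-\xi_n\geq\lambda-1$, whence $2h_2\bigl(|y_{2,n}|-\xi_n|x_{2,n}|\bigr)\geq 2h_2\sqrt{\lambda-1}\,(1-\xi_n)$ dominates the $-h_2^2$ term; combined with $2h_2|x_{2,n}|=O(\nu)<1$ in the regime $|x_{2,n}|=O(\delta^{-1/2})$, $h_2\leq\nu\delta^{1/2}$, positivity then propagates as you claim. With these two adjustments the proof is complete.
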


\begin{proof}
We consider two cases, trajectories below and above the diagonal. 
First, we assume that, at time $n \in \mathbb{N}$, the trajectory is below the diagonal 
$\{x = y\}$ so that $ y_{2,n} \leq x_{2,n}$. In particular, this covers the case when 
$ y_{2,n} \leq x_{2,n} < 0$, i.e., the trajectory lies below $S_{\txta,2}^-$, which is 
relevant for the initial data.  If $x_{2,n} = y_{2,n}$, then obviously $y_{2,n+1} < 
x_{2,n+1}$. If $y_{2,n} < x_{2,n}$, we observe similarly to~\eqref{x2biggery2} that 
$$ y_{2,n+1} \geq x_{2,n+1} \quad \text{iff} \quad 1 + \frac{h_2(\lambda 
- 1)}{x_{2,n}-y_{2,n}} \leq - h_2 ( x_{2,n} + y_{2,n} )\,.$$
Hence, since $ h_2 \left| x_{2,n} + y_{2,n} \right| <  (2 (1 - 2 \nu)^{-1/2}\delta^{-1/2} 
+ \frac{2 \lambda -1}{4} \delta^{1/2} )\delta^{1/2} \nu < 1$ due to $\delta \lambda \leq 1$ 
and $\nu < \frac{1}{8}$, we have $ y_{2,n+1} < x_{2,n+1} $ and the argument goes on inductively.

Moreover, we check that the sequences are increasing in this case. We consider an initial 
condition $(x_{2,0}, y_{2,0})$ in $\Sigma_2^{\textnormal{in}}$ with $y_{2,0} < x_{2,0} < 0$, 
i.e., below $S_{\txta,2}^-$. We obviously have $x_{2,1} < x_{2,0} + \lambda h_2$. Furthermore, observe that
\begin{align*}
x_{2,0}^2 - y_{2,0}^2 + \lambda &\geq \delta^{-1} - ( \delta^{-1/2}(-1 - \beta_2(\lambda)))^2 + \lambda \\
&= \delta^{-1} - \delta^{-1} - \lambda + \frac{1}{2} - \frac{(2 \lambda - 1)^2}{16} \delta + \lambda \\
& = \frac{1}{2} - \frac{(2 \lambda - 1)^2}{16} \delta\geq \frac{1}{4}\,,
\end{align*}
if $\delta$ is chosen sufficiently small in comparison to $\lambda$.
Hence, $x_{2,1} \geq  x_{2,0} + \frac{1}{4}  h_2$. Note in particular that if $y_{2,0} 
= \delta^{-1/2}(-1 - \beta_2(\lambda)) =: y^*$, we have $x_{2,0}^2 - y_{2,0}^2 + \lambda 
< \frac{1}{2}$.  From here, it is easy to observe that, for $\nu$ small enough, $  x_{2,1} 
- y_{2,1} < x_{2,0} - y^* $. This together with the fact that $ \left| y_{2,1} + x_{2,1} \right| 
< \left| y_{2,0} + x_{2,0} \right|$ yields $ x_{2,1}^2-y_{2,1}^2 + \lambda > \frac{1}{4}$. Hence, 
we have $ x_{2,2} \geq  \frac{1}{2} h_2$ and obviously $ y_{2,2} = y_{2,0} + 2 h_2$. Therefore, we 
see inductively that for $0 > x_{2,n}  > y_{2,n}$ both sequences are increasing.

As the second case, we consider a trajectory with initial condition $(x_{2,0}, y_{2,0})$ 
in $\Sigma_2^{\textnormal{in}}$ and $x_{2,0} < y_{2,0} < 0$, i.e., above $S_{\txta,2}^-$. 
Either we already have $ y_{2,1} \leq x_{2,1} < 0$. If not, we have $x_{2,1} >  x_{2,0} + 
\lambda h_2$ and obviously $ y_{2,1} = y_{2,0} +  h_2$. Therefore, we see inductively that 
for $x_{2,n}  < y_{2,n} <0$ both sequences are increasing and we either already have $ y_{2,n+1} 
\leq x_{2,n+1} < 0$ or 
\begin{align*}
x_{2,n+1} - y_{2,n+1} & >  x_{2,0} - y_{2,0} +(n+1)(\lambda -1)h_2 \\
& \geq \delta^{-1/2} - \delta^{-1/2}(1 - 2 \nu)^{-1/2} - (\lambda-1)\delta^{1/2} 
+(n+1)(\lambda -1)h_2 .
\end{align*}
Thus, we can conclude that $ y_{2,n^*} \leq x_{2,n^*} < 0$ for some $n^*$ such that 
$n^* h \geq \delta^{1/2}\left( 1+ \frac{(1 - 2 \nu)^{-1/2}-1}{\lambda -1} \right)$. 
Using $(1 - 2 \nu)^{-1/2} > 1$, the claim follows.
\end{proof}

Finally, we turn to the proof of Proposition~\ref{propK2}.

\begin{proof}[Proof of Proposition~\ref{propK2}] We distinguish three cases: (I) $ 0 < \lambda < 1$,
(II) $\lambda\leq 0$, and (III) $\lambda>1$. The case distinction is going to allow us to apply
each of the preliminary results obtained above.

\textit{Case (I) $0<\lambda<1$:}
From Lemma~\ref{lambda01} we know that trajectories starting in $\Sigma_2^{\textnormal{in}}$ 
will be above the diagonal $\{x = y\}$ at certain point of time and stay there forever afterwards. 
From this result and the fact that $y_{2,n}$ and $x_{2,n}$ are both strictly increasing uniformly 
as long as $\left|y_{2,n} \right| \leq \left|x_{2,n}\right|$, we can conclude that any such trajectory 
reaches a point $(x_{2,\tilde n}, y_{2,\tilde n})$, with $y_{2,\tilde n} > 0$, such that 
$y_{2,\tilde n}^2 > x_{2,\tilde n}^2$. We can conclude that there must be a minimal $n^* > \tilde n$ 
such that $y_{2,n^*}^2 > x_{2,n^*}^2 + \lambda$. Note that $(x_{2,n^*}, y_{2,n^*})$ lies between 
$ S_{\txta,2}^{+}$ and $ S_{\txtr,2}^{+}$. As long as this is the case
for $n > n^*$, we have $x_{2,n+1} < x_{2,n}$. Additionally, we observe that for $y_{2,n}^2 - x_{2,n}^2 > 
\lambda +1$ we have $y_{2,n+1} + x_{2,n+1} < y_{2,n} + x_{2,n}$. Hence, trajectories are rapidly 
approaching the vicinity of $S_{2,\txta }^{+}$. Similarly to~\eqref{x2biggery2}, we find that for any 
such $y_{2,n} > - x_{2,n} > 0$
$$ \left|x_{2,n+1} \right| \geq y_{2,n+1} \quad \text{iff} \quad 1 
+ \frac{h_2(1+\lambda)}{y_{2,n}-\left|x_{2,n}\right|} \leq  h_2 ( \left|x_{2,n}\right| + y_{2,n} )\,.$$
Hence, since $ h_2 \left| x_{2,n}\right| + y_{2,n}  <  (2 + \delta + h_2) \nu < 1 $ before hitting 
$\Sigma_{2,\txta }^{\textnormal{out}}$, we have $ \left|x_{2,n+1} \right| <  y_{2,n+1}$ and the argument goes on 
inductively before hitting $\Sigma_{2,\txta }^{\textnormal{out}}$. The fact that the trajectory will 
actually be located within $\Sigma_{2,\txta }^{\textnormal{out}}$ at a certain point of time can be 
inferred as follows: we observe from the above that $(x_{2,n},y_{2,n})$ satisfies $0 \leq y_{2,n}^2 
- x_{2,n}^2 \leq \lambda +1$ for large enough $n$. First, we can conclude that $x_{2,n} - x_{2,n+1} 
\leq h_2$. Hence, there is an $m \in \mathbb{N}$ such that $x_{2,m} \in [-\delta^{-1/2} - \frac{h_2}{2},
-\delta^{-1/2} +\frac{h_2}{2}]$. Therefore we have $y_{2,m} \geq \delta^{-1/2} - \frac{h_2}{2}$ and
$$  y_{2,m}^2 \leq \delta^{-1} + (\lambda +1) - h_2 \delta^{-1/2} + \frac{h_2^2}{4} \leq \delta^{-1} 
+ 2 \beta_2^+ \delta^{-1} + \delta^{-1} (\beta_2^+)^2 = \left(\delta^{-1/2}(1 + \beta_2^+)\right)^2. $$
Figure~\ref{fig:transcrit} (b) illustrates the behaviour of trajectories starting in 
$\Sigma_{2,\txta }^{\textnormal{in}}$ for $ 0 < \lambda <1$.

\textit{Case (II) $\lambda\leq 0$:} We know from Lemma~\ref{lambda01} that any trajectory starting 
in $\Sigma_2^{\textnormal{in}}$ is strictly increasing in $x$ as long as $x_{2,n}, y_{2,n} < 0$ and 
will be above the diagonal $\{x = y\}$ at any point of time. Analogously to before, we can conclude 
that there exists a minimal $n^* \in \mathbb{N}$ such that $y_{2,n^*} > 0$ and $y_{2,n^*}^2 > x_{2,n^*}^2 
+ \lambda$. Note that if $(x_{2,n^*}, y_{2,n^*})$ lies between $ S_{\txta,2}^{+}$ and $ S_{\txtr,2}^{+}$ 
and stays in this region for all $n > n^*$ before hitting $\Sigma_{\txta,2}^{\textnormal{out}}$, as for 
example for $ -1\leq \lambda \leq 0$, the arguments go exactly as before.
Otherwise we observe, symmetrically to before, that $(x_{2,n},y_{2,n})$ satisfies $0 \geq x_{2,n}^2 
- y_{2,n}^2 + \lambda \geq - 1$ for large enough $n$. Again, we infer that $x_{2,n} - x_{2,n+1} \leq h_2$ 
and conclude that there is an $m \in \mathbb{N}$ such that $x_{2,m} \in [-\delta^{-1/2} - \frac{h_2}{2},
-\delta^{-1/2} +\frac{h_2}{2}]$. Therefore we have $y_{2,m} \leq \delta^{-1/2} + \frac{h_2}{2}$ and for 
$\delta$ sufficiently small depending on $\lambda$
$$  y_{2,m}^2 \geq \delta^{-1} - h_2 \delta^{-1/2} + \frac{h_2^2}{4} + \lambda \geq \delta^{-1} + \lambda  
+ \frac{(\left| \lambda \right| + 1)^2}{4} \delta - 1 = \left(\delta^{-1/2}(1 - \hat \beta_2^+)\right)^2. $$
Figure~\ref{fig:transcrit}~(a) illustrates the behaviour of trajectories starting in 
$\Sigma_{2,\txta }^{\textnormal{in}}$ for $ \lambda < 0$.

\textit{Case (III) $\lambda>1$:} We can conclude from Lemma~\ref{lambda1big} that trajectories starting 
in $\Sigma_2^{\textnormal{in}}$ will be below $S_{\txta,2}^-$ at a certain point of time and stay below 
the diagonal $\{x = y\}$ forever afterwards. From that and the fact that $y_{2,n}$ is strictly increasing 
for all time, we can conclude that any such trajectory will reach a point $(x_{2,n^*}, y_{ 2, n^*})$ with 
$ x_{2,n^*}> y_{2,n^*} > 0$. Then the trajectory will increase its distance from the diagonal in each time 
step by
$$  h_2 (\lambda -1) + h_2 (x_{2,n}^2 - y_{2,n}^2)\,.$$
Let us take the largest $n$ such that $ x_{2,n}> 0 
\geq y_{2,n} $. It is now obvious that there is an $m \in \mathbb{N}$ such that $\delta^{-1/2}  \leq x_{2,n+m} \leq  \delta^{-1/2} + h_2(\lambda + \delta^{-1}) $. We give an upper bound for $y_{2,m+n}$ by expanding $x_n$ up to 
$h_2^3$, which is the first order estimate in this case:
\begin{align*}
(1 + 2\nu) \delta^{-1/2} &\geq x_{2,m +n}  > mh_2 \lambda  + 
\left(\sum_{k=1}^{m-1} k^2 \right) (\lambda^2 -1) h_2^3 \\ &\geq y_{2,m+n} \lambda  + 
\frac{1}{6} (\lambda^2 - 1)y_{2,m+n}(y_{2,m+n} -h_2)(2y_{2,m+n} -h_2) \\
&\geq \lambda y_{2,m+n} + \frac{1}{6} (\lambda^2 - 1)y_{2,m+n}^3\,.
\end{align*}
Hence, we conclude that $y_{2,m+n} = \mathcal{O}(\delta^{-1/6})$ and $(x_{2,n+m}, y_{2,m+n}) \in 
\Sigma_{2,\txte }^{\textnormal{out}}$.
Figure~\ref{fig:transcrit} (c) illustrates such a trajectory. 
\end{proof}

\subsection{Dynamics in the chart $K_3$} 
\label{secK3}

We investigate the dynamics in the chart $K_3$~\eqref{K3} for $\lambda > 1$. 
First, recall from~\eqref{kappa23d} that the change of ccordinates $k_{23}: K_2 \to K_3$ is given by
\begin{equation*}
\epsilon_3 = x_2^{-2}, \quad  y_3 = x_2^{-1} y_2, \quad r_3 =  x_2 r_2, \quad h_3 =  x_2 h_2\,,
\end{equation*}
Symmetrically to the chart $K_1$, we define
$$ D_3 := \{(r_3, y_3, \epsilon_3, h_3) \in \mathbb{R}^4 \,:\, r_3 \in [ 0, \rho], 
\epsilon_3 \in [0, \delta], h_3 \in [ 0, \nu] \}$$
and
$$ \hat D_3 := \{(r_3, y_3, \epsilon_3, h_3) \in \mathbb{R}^4 \,:\, r_3 \in [ \rho/2, \rho], 
\epsilon_3 \in [\delta/4, \delta], h_3 \in [ \nu/2, \nu] \}\,.$$
Since we need to have $k_{23}\left( \Sigma_{2,\txte}^{\textnormal{out}} \right) \subset 
\Sigma_3^{\textnormal{in}}$, a suitable choice is given by
\begin{equation*}
\Sigma_{3}^{\textnormal{in}} :=  \{(r_3, y_3, \epsilon_3, h_3) \in D_3 \,:\, \left(\delta^{-1} 
+ 4 \nu \delta^{-1}\right)^{-1}  \leq \epsilon_3 \leq  \delta \}\,.
\end{equation*}
Furthermore, we will simply set
\begin{equation*}
\Sigma_{3}^{\textnormal{out}} :=  \{(r_3, y_3, \epsilon_3, h_3) \in D_3 \,:\, r_3 = \rho,  
h_3 = \nu, \epsilon_3 = \delta/4, \ y_3 > 0  \}\,,
\end{equation*}
and will end the analysis with the point of the trajectory which is closest to $\Sigma_{3}^{\textnormal{out}}$.
The dynamics, desingularised by choosing $h = h_3/r_3$, look as follows:
\begin{align} \label{K3discrete}
\tilde{r}_3 &= r_3(1 + h_3F_3(y_3, \epsilon_3)), \nonumber \\
\tilde y_3 &= (y_3 + \epsilon_3 h_3)(1 + h_3F_3(y_3, \epsilon_3))^{-1}\,\nonumber \\
\tilde \epsilon_3 &=  \epsilon_3 (1 + h_3F_3(y_3, \epsilon_3))^{-2}\,\nonumber \\
\tilde{h}_3 &= h_3(1 + h_3F_3(y_3, \epsilon_3)),
\end{align}
where $F_3(y_3, \epsilon_3) = 1 - y_3^2 + \lambda \epsilon_3$. For any $h_3$ system~\eqref{K3discrete} 
has the fixed points
$$ v_{\txtr,3}^- = (0,-1,0,h_3), \quad v_{\txtr,3}^+ = (0,1,0,h_3).$$
The points $v_{\txtr,3}^-$ and $v_{\txtr,3}^+$ have a three-dimensional centre eigenspace and a 
one-dimenional unstable eigenspace with the eigenvalue $1+2h_3$. Hence, unlike the analogous case in the 
chart $K_1$, the stability does not depend on the size of $h_3$. The most relevant manifold for our problem 
is given by
$$ W := \{w^{\textnormal{out}}(h_3) := (0,0,0,h_3)\;\:\;\ h_3 \in [0, \nu]\}\,,$$
which is a line for system~\eqref{K3discrete} within $D_3$. The points $w^{\textnormal{out}}(h_3)$ 
have two stable and two unstable eigenvalues 
$$\lambda_1 = (1 +h_3)^{-2}, \quad \lambda_2 = (1 + h_3)^{-1}, \quad \lambda_3 = 1+ h_3 , \quad \lambda_4 
= 1 + 2h_3\,,$$
such that the stability corresponds to the time-continuous problem independently from $h_3$. Note that the chart $K_3$ differs in that respect from the chart $K_1$ where preservation of stability is bound to the stability criteria of the Euler method known from the Dahlquist test equation.

The eigenvalues $\lambda_1, \lambda_2$ correspond with the $\epsilon_3$- and $y_3$-directions and 
$\lambda_3, \lambda_4$ with the $r_3$- and $h_3$-directions. We extend the set $W$ to the attracting invariant manifold $M_{\txta,3}$, which is given in $D_3$ by a graph $y_3 = l_{3}(\epsilon_3, h_3)$. One can derive 
$l_{3}$ from the discrete invariance equation
\begin{equation} 
\label{invarianceK3}
l_{3} (\tilde \epsilon_3, \tilde h_3)= \frac{l_{3} (\epsilon_3, h_3) + \epsilon_3 h_3}{1 + h_3F_3( 
l_{3} (\epsilon_3, h_3), \epsilon_3)}\,.
\end{equation} 
Note that, analogously to the continuous time case, there is the resonance $\lambda_1 \lambda_3 = \lambda_2$, 
which makes the description of the dynamics close to $W$ and $M_{\txta,3}$ a delicate problem. However, 
the exiting behaviour can still be estimated by a relatively simple argument without a full analysis
of the resonance as follows. Let $P_3$ denote the map given by~\eqref{K3discrete} and $\pi_y$ the 
projection to the $y$-component. 

\begin{proposition} 
\label{TranstionK3}
The transition map $\Pi_3$ from $\Sigma_3^{\textnormal{in}}$ to the vicinity of 
$\Sigma_3^{\textnormal{out}}$ given by 
$$\Pi_3(z) = P^{m^*(z)}(z)\,, \text{ where } m^*(z) = \argmin_{n \in \mathbb{N}} 
\dist( P_3^{n}(z), \Sigma_3^{\textnormal{out}})\,, \ z \in \Sigma_3^{\textnormal{in}}\,,$$
is well-defined on $k_{23}\left( \Sigma_{2,\txte}^{\textnormal{out}} \right) $. 
Furthermore, for $z \in k_{23}\left( \Sigma_{2,\txte}^{\textnormal{out}} \right) \subset 
\Sigma_3^{\textnormal{in}}$ we have $\pi_y(\Pi_3(z)) = \mathcal{O}(\delta^{1/3})$.
\end{proposition}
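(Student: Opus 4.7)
The plan is to exploit two explicit invariants of the map \eqref{K3discrete}, namely $r_3^2 \epsilon_3 = \epsilon$ and $h_3^2 \epsilon_3 = h^2 \epsilon$ (both immediate from $\tilde \epsilon = \epsilon$, $\tilde h = h$ in the original coordinates together with the chart relations \eqref{K3d}), combined with a forward-invariant interval for $y_3$, thereby avoiding any direct treatment of the resonance $\lambda_1 \lambda_3 = \lambda_2$ at $W$. First I would transport the exit data from $\Sigma_{2,\txte}^{\textnormal{out}}$ through $k_{23}$ as in \eqref{kappa23d}. Since $x_2 \geq \delta^{-1/2}$ and $0 \leq y_2 < \Omega(\lambda) \delta^{-1/6}$, one obtains $\epsilon_3 = x_2^{-2} \in [\delta/(1 + \mathcal{O}(\nu)), \delta]$ and $y_3 = x_2^{-1} y_2 \in [0, \Omega(\lambda) \delta^{1/3})$, so that $k_{23}(\Sigma_{2,\txte}^{\textnormal{out}}) \subset \Sigma_3^{\textnormal{in}}$.

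The core step is to show that $\mathcal{I}_C := [0, C \delta^{1/3}]$, with $C := 2\Omega(\lambda)$, is forward invariant in the $y_3$-variable as long as the orbit stays in $D_3$. Non-negativity is preserved because $\tilde y_3 = (y_3 + \epsilon_3 h_3)/(1 + h_3 F_3) \geq 0$ once $F_3 \geq 1/2$, which holds throughout $\mathcal{I}_C$ for $\delta$ small. For the upper bound, a direct computation gives
\[
\tilde y_3 - y_3 = \frac{h_3 \bigl(\epsilon_3 - y_3 F_3\bigr)}{1 + h_3 F_3},
\]
so at $y_3 = C \delta^{1/3}$ one has $\epsilon_3 - y_3 F_3 \leq \delta - C \delta^{1/3}/2 < 0$ for $\delta$ sufficiently small. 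Combined with the strict monotonicity $\tilde \epsilon_3 = \epsilon_3/(1 + h_3 F_3)^2 < \epsilon_3$, the two invariants pin $r_3 = \sqrt{\epsilon/\epsilon_3}$ and $h_3 = h\sqrt{\epsilon/\epsilon_3}$, keeping the orbit in a bounded subset of $D_3$ and bringing it arbitrarily close to $\Sigma_3^{\textnormal{out}}$. Hence the argmin defining $m^*(z)$ is attained in finitely many iterations; $\Pi_3$ is well-defined, and the bound $\pi_y(\Pi_3(z)) \in \mathcal{I}_C$ yields $\pi_y(\Pi_3(z)) = \mathcal{O}(\delta^{1/3})$.

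The main obstacle is the $y_3$-estimate: even though the $h_3$-direction at the fixed-point line $W$ carries the unstable eigenvalue $1 + 2 h_3$, the $y_3$-direction is governed by the stable eigenvalue $(1 + h_3)^{-1}$, and the forcing term $\epsilon_3 h_3$ is dominated by $\delta h_3$, itself much smaller than the contractive budget $(C \delta^{1/3}/2)\, h_3$. This elementary invariant-set observation is precisely what circumvents the resonance that would otherwise obstruct a linearization-based argument, and it is the only place where smallness of $\delta$ relative to $\Omega(\lambda)^3$ is used to close the estimate.
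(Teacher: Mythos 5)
Your proof is correct and takes essentially the same route as the paper's: both transport the $\mathcal{O}(\delta^{-1/6})$ bound on $y_2$ through $k_{23}$ to get $\pi_y(z)=\mathcal{O}(\delta^{1/3})$ at entry, keep $F_3$ positive along the orbit, and play the smallness of the forcing $\epsilon_3 h_3 = \mathcal{O}(\nu\delta)$ against the contraction factor $(1+h_3F_3)^{-1}$ to propagate $\pi_y(P_3^n(z))=\mathcal{O}(\delta^{1/3})$ for all $n\leq m^*(z)$, with positivity of $F_3$ driving the monotone passage (here $r_3,h_3$ increasing, $\epsilon_3$ decreasing) that makes $\Pi_3$ well-defined — all without touching the resonance. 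Your explicit forward-invariant interval $[0,2\Omega(\lambda)\delta^{1/3}]$, the drift identity $\tilde y_3 - y_3 = h_3(\epsilon_3 - y_3F_3)/(1+h_3F_3)$, and the conserved quantities $r_3^2\epsilon_3=\epsilon$ and $h_3^2\epsilon_3=h^2\epsilon$ simply make rigorous what the paper compresses into ``we can immediately infer,'' so this is the same argument written with more quantitative care rather than a genuinely different one.
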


\begin{proof}
By the construction of $\Sigma_{2,\txte }^{\textnormal{out}}$, 
Proposition~\ref{propK2}, and the fact that $y_3 = x_2^{-1} y_2$ we have $\pi_y(z) = 
\mathcal{O}\left(\delta^{1/3}\right)$ for any $z \in k_{23}\left( \Sigma_{2,\txte}^{\textnormal{out}} \right)$. 
Further note that $F_3$ is clearly positive as long as $y_3$ maintains some positive order of $\delta$. 
Since $\epsilon_3 h_3 = \mathcal{O}(\delta \nu)$ in $D_3$, we can immediately infer that 
$$\pi_y(P_3^n(z)) = \mathcal{O}\left(\delta^{1/3}\right)\qquad \text{for all $n \leq m^*(z)$.}$$
This implies both statements as $F_3$ stays positive along the trajectory.
\end{proof}

\begin{figure}[htbp]
        \centering
  		\begin{overpic}[width=0.8\textwidth]{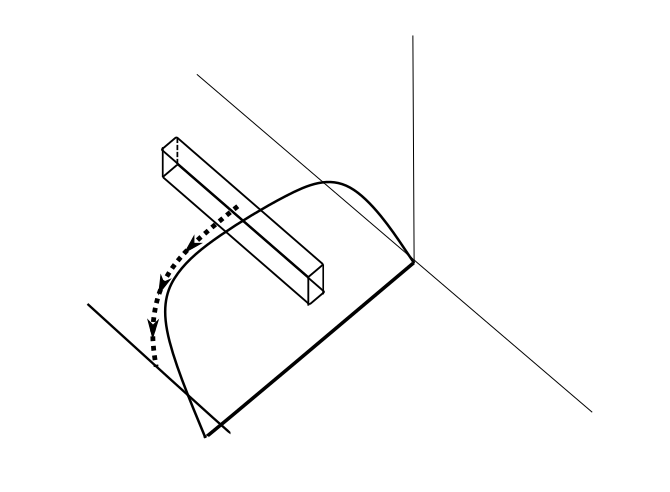}            
           \put(32,49){\small $\Sigma_{3}^{\textnormal{in}}$}
           \put(6,28){ $\Sigma_{3}^{\textnormal{out}}$}
           \put(86,9){ $y_3$} 
           \put(56,68){ $\epsilon_3$}
           \put(52,46){ $M_{\txta,3}$} 
           \put(39,13){ $r_3$}   
        \end{overpic}
		\caption{Illustration of the dynamics in chart $K_3$ in $(r_3,y_3,\epsilon_3)$-space. Note that, due to the relation $h_3 = r_3 h$, we can interpret $l_{3}(\epsilon_3, h_3)$ as a function of $(\epsilon_3, r_3)$ and sketch its graph, the invariant manifold $M_{\txta,3}$, as in the figure above. The figure shows a trajectory (dotted line) starting in $\Sigma_{3}^{\textnormal{in}}$ up to reaching the vicinity of $\Sigma_{3}^{\textnormal{out}}$ close to $M_{\txta,3}$.}
        \label{fig:K3}
\end{figure}

\subsection{Connecting the charts and proof of the Theorem} 
\label{secproof}

Finally, we can prove Theorem~\ref{transcritical_discrete} by combining the 
dynamics in $K_1$, $K_2$ and $K_3$ into a global picture.
\begin{proof}[Proof of Theorem~\ref{transcritical_discrete}]
We have proven the statements in charts $K_1, K_2, K_3$ for $\bar \epsilon = \delta$ with 
$\delta$ sufficiently small. Hence, we choose $\epsilon_0 = \rho^2 \delta_0$, where $\delta_0$ 
is the largest value of $\delta/4$ such that the statements hold. We did not use any further 
restrictions on $h$ apart from $\rho h < \delta$ and $\rho h < \frac{1}{8}$. Hence, it is enough 
to assume $ \rho^3 h < \epsilon$.

As before, we distinguish several cases. First, we consider $\lambda < 1$. We define the map 
$\bar \Pi_\txta$ from $ R_1 \subset \Sigma_{1,-}^{\textnormal{in}}$ to the vicinity of 
$\Sigma_{1,+}^{\textnormal{out}}$ by
$$ \bar \Pi_\txta := \Pi_{1,+} \circ k_{21} \circ \Pi_2 \circ k_{12} \circ \Pi_{1, -}\,,$$
where $\Pi_2: \Sigma_2^{\textnormal{in}} \to \Sigma_{2,\txta }^{\textnormal{out}}$ is the map 
well-defined by Proposition~\ref{propK2}. We have seen
that 
$$k_{12} \left(\Pi_{1,-} \left( R_1\right) \right) \subset \Sigma_2^{\textnormal{in}} 
\ \text{ and } \ k_{21} \left( \Sigma_{2,\txta }^{\textnormal{out}} \right) \subset R_2 
\subset \Sigma_{1, +}^{\textnormal{in}}\,.$$
Hence, $\bar \Pi_\txta$ is indeed a well-defined map. We have that $ \Pi_\txta = \Phi \circ 
\bar \Pi_\txta \circ \Phi^{-1}$, where $\Delta^{\textnormal{in}} = \Phi(R_1) $ and 
$\Delta_\txta^{\textnormal{out}} \subset \Phi \left(\Sigma_{1,+}^{\textnormal{out}}\right)$ 
is an interval about $S_\txta^+$ of the same size as $\Delta^{\textnormal{in}}$. We observe with 
Proposition~\ref{Invariance_Prop} that $ \Phi \left(M_{\txta,1}^- \right) \subset S_{\txta, \epsilon,h}^{-}$ 
and $ \Phi \left(M_{\txta,1}^+\right) \subset S_{\txta, \epsilon,h}^{+}$, and, by the choices of $R_1$ 
and $R_2$, that $\Delta^{\textnormal{in}} \cap S_{\txta,\epsilon,h}^-$ and $ \Pi_\txta \left( 
\Delta^{\textnormal{in}} \right) \cap S_{\txta,\epsilon,h}^+$ are nonempty. Summarizing, we can 
conclude that $\Pi_\txta$ maps $\Delta^{\textnormal{in}}$ including $\Delta^{\textnormal{in}} 
\cap S_{\txta,\epsilon,h}^-$ to a set about $S_{\txta, \epsilon,h}^{+}$. The distance between any 
point in $ \Pi_\txta \left( \Delta^{\textnormal{in}} \right)$ and $\Delta_\txta^{\textnormal{out}}$ 
is of order $\mathcal{O}(h \epsilon)$ since for $(x,y) 
\in \Delta_\txta^{\textnormal{out}} \cap S_{\txta,\epsilon,h}^+$ it is bounded by $h(x^2 - y^2 + \lambda \epsilon)$ due to the definition 
of $\Pi_\txta$ and we have by~\eqref{lplus} that
$$\left| x^2 - y^2 \right| = \left| x - y \right| \left| x + y \right| = 
\mathcal{O}(\delta \rho)\mathcal{O}( \rho) =  \mathcal{O}\left(\frac{\epsilon}{\rho} 
\right)\mathcal{O}( \rho)= \mathcal{O}(\epsilon)\,.$$
Furthermore, Proposition~\ref{contraction_K1} says that $\Pi_{1,-}|R_1$ and 
$\Pi_{1,+}|R_2$ are contractions in the $y_1$ direction with rates of order at least 
$ \mathcal{O} \left((1- c)^{\frac{C}{\rho h \delta} } \right)$ for some constant $C>0$. Since 
$\Pi_2$ is also contracting and due to $\mathcal{O}(\delta) =  \mathcal{O} 
\left(\frac{\epsilon}{\rho^2}  \right)$, we obtain that $\Pi_\txta 
\left(\Delta^{\textnormal{in}}\right)$ has $y$-width at most of order 
$\mathcal{O}\left( (1- c)^{\frac{C \rho }{h \epsilon}} \right)$.

Let now $\lambda > 1$.  
We define the map $\bar \Pi_\txte$ from $ R_1 \subset \Sigma_{1,-}^{\textnormal{in}}$ to the vicinity 
of $\Sigma_{3}^{\textnormal{out}}$ by
$$ \bar \Pi_\txte := \Pi_{3} \circ k_{23} \circ \Pi_2 \circ k_{12} \circ \Pi_{1, -}\,.$$ 
Again, we know that $k_{12} \left(\Pi_{1,-} \left( R_1\right) \right) \subset \Sigma_2^{\textnormal{in}}$, 
and furthermore from Proposition~\ref{TranstionK3} that $\Pi_3$ is well-defined on $k_{23} \left( 
\Sigma_{2,\txte }^{\textnormal{out}} \right) \subset \Sigma_{3}^{\textnormal{in}}$. Hence, 
$\bar \Pi_\txte$ is indeed a well-defined map. We have that $ \Pi_\txte = \Phi \circ \bar \Pi_\txte 
\circ \Phi^{-1}$, where again $\Delta^{\textnormal{in}} = \Phi(R_1) $ and $\Delta_\txte^{\textnormal{out}} 
\subset \Phi \left(\Sigma_{3}^{\textnormal{out}}\right)$ is an interval perpendicular to the $x$-axis. 
It follows immediately that $S_{\txta, \epsilon,h}^-$ passes through $\Delta_\txte^{\textnormal{out}}$ 
at a point $(\rho, k(\epsilon))$. Using Proposition~\ref{TranstionK3} we can characterize $k(\epsilon) 
 = \rho \mathcal{O}(\delta^{1/3}) = \rho^{1/3} \mathcal{O}(\epsilon^{1/3})$.  

The fact that $\Pi_\txte \left( \Delta^{\textnormal{in}} \right)$ has $y$-width 
$\mathcal{O}\left( (1- c)^{\frac{C \rho}{ h \epsilon}}\right)$ follows as for $\lambda < 1$.
The distance between any point in $ \Pi_\txte \left( \Delta^{\textnormal{in}} \right)$ and 
$\Delta_\txta^{\textnormal{out}}$ is of order $\mathcal{O}(h (\epsilon + \rho^2))$ since for $(x,y) 
\in \Delta_\txte^{\textnormal{out}} \cap S_{\txta,\epsilon,h}^-$ it is bounded by $h(x^2 - y^2 + 
\lambda \epsilon)$ due to the definition of $\Pi_\txte$ and $ x^2 - y^2 = 
\mathcal{O}\left(\rho^2 - \epsilon^{2/3}\rho^{2/3} \right)$.
This finishes the proof.
\end{proof}

\section{Summary and Outlook} 
\label{summaryoutlook}

We have applied the blow-up method to the Euler discretization of a fast-slow system 
with a transcritical singularity at the origin. We have shown that the qualitative behaviour 
of the slow manifolds is preserved by the discretization for any choice of $0 < h  < \epsilon$ (setting $\rho =1$), 
where $h$ denotes the time step size and $\epsilon$ the small time scaling parameter of the 
fast-slow system. The central part of the proof lies in the scaling chart $K_2$ of the manifold 
corresponding with the blown up singularity and is expressed in Proposition~\ref{propK2}. The 
proof of the proposition uses direct analysis of the map and, by that, can be seen of an alternative 
way of also showing the continuous analogue of the result when $h \to 0$. Furthermore, we are able 
to estimate transition times of trajectories by the analysis of the entering chart $K_1$ and give 
a bound for the $y$-component in the exiting chart $K_3$. In fact, our estimates provide a
very fine control on individual trajectories, which is a potential advantage of the discrete-time
framework for fast-slow systems.

We consider the work presented in this paper as one of the key steps towards a more comprehensive 
analysis of non-hyperbolic fixed points and non-hyperbolic submanifolds of fixed points in maps 
with multiple time scales. Whereas the normally hyperbolic theory for discrete-time multiple 
time scale systems is already quite well developed in \cite{HPS77, ns2013, Poe03}, the geometric 
desingularisation of non-hyperbolic objects for maps still needs several extensions. For example, 
our problem~\eqref{map_intro} is based on an explicit Euler discretization, which is obviously 
the most straight forward scheme. We conjecture that one can use the more direct blow-up approach
we use here for maps corresponding to ODEs also for other time-discretization schemes. 

There are several reasons, why particular schemes should be checked: it is well-known from
the area of geometric integration and the general theory of structure-preserving discretizations
\cite{HLW06} that only certain discrete-time schemes preserve relevant dynamical
properties, e.g. adiabatic invariants for the Hamiltonian systems case \cite{HLW06} or
certain asymptotic dynamics for the dissipative case \cite{Jin99}.
For multiple time scale maps, Runge-Kutta methods have been studied from a geometric 
viewpoint \cite{ns95, ns96}. It remains to clarify more systematically, for which discretization 
the geometric blow-up approach can be applied and what the relation between the two small
parameters $0 <h, \epsilon \ll 1$ must be. In this context, an interesting problem are canard 
explosions in discrete-time \cite{ER03, Fru88}, where also a third parameter is going to play a
key role. Working out this case starting from the
geometric approach by Krupa and Szmolyan for fast-slow ODEs \cite{ks2001/3} is currently 
work in progress by the authors of this paper. 

\newpage

\bibliographystyle{plain}
\bibliography{mybibfile}

\begin{thebibliography}{10}

\bibitem{BenoitCallotDienerDiener}
E.~Beno\^{i}t, J.L. Callot, F.~Diener, and M.~Diener.
\newblock Chasse au canards.
\newblock {\em Collect. Math.}, 31:37--119, 1981.

\bibitem{Dahlquist}
G.~G. Dahlquist.
\newblock A special stability problem for linear multistep methods.
\newblock {\em Nordisk Tidskr. Informations-Behandling}, 3:27--43, 1963.

\bibitem{Du78}
F.~Dumortier.
\newblock {\em Singularities of vector fields}, volume~32 of {\em Monograf\'\i
  as de Matem\'atica [Mathematical Monographs]}.
\newblock Instituto de Matem\'atica Pura e Aplicada, Rio de Janeiro, 1978.

\bibitem{Du93}
F.~Dumortier.
\newblock Techniques in the theory of local bifurcations: blow-up, normal
  forms, nilpotent bifurcations, singular perturbations.
\newblock In {\em Bifurcations and periodic orbits of vector fields
  ({M}ontreal, {PQ}, 1992)}, volume 408 of {\em NATO Adv. Sci. Inst. Ser. C
  Math. Phys. Sci.}, pages 19--73. Kluwer Acad. Publ., Dordrecht, 1993.

\bibitem{DuRo96}
F.~Dumortier and R.~Roussarie.
\newblock Canard cycles and center manifolds.
\newblock {\em Mem. Amer. Math. Soc.}, 121(577):x+100, 1996.
\newblock With an appendix by Cheng Zhi Li.

\bibitem{ER03}
A.~El-Rabih.
\newblock Canards solutions of difference equations with small step size.
\newblock {\em J. Difference Equ. Appl.}, 9(10):911--931, 2003.

\bibitem{Fenichel4}
N.~Fenichel.
\newblock Geometric singular perturbation theory for ordinary differential
  equations.
\newblock {\em J. Differential Equations}, 31:53--98, 1979.

\bibitem{Fru88}
A.~Fruchard.
\newblock Canards discrets.
\newblock {\em C. R. Acad. Sci. Paris S\'er. I Math.}, 307(1):41--46, 1988.

\bibitem{GucwaSzmolyan}
I.~Gucwa and P.~Szmolyan.
\newblock Geometric singular perturbation analysis of an autocatalator model.
\newblock {\em Discr. Cont. Dyn. Syst. S}, 2(4):783--806, 2009.

\bibitem{HLW06}
E.~Hairer, C.~Lubich, and G.~Wanner.
\newblock {\em Geometric numerical integration}, volume~31 of {\em Springer
  Series in Computational Mathematics}.
\newblock Springer, Heidelberg, 2010.
\newblock Structure-preserving algorithms for ordinary differential equations,
  Reprint of the second (2006) edition.

\bibitem{HPS77}
M.~W. Hirsch, C.~C. Pugh, and M.~Shub.
\newblock {\em Invariant manifolds}.
\newblock Lecture Notes in Mathematics, Vol. 583. Springer-Verlag, Berlin-New
  York, 1977.

\bibitem{Jin99}
S.~Jin.
\newblock Efficient asymptotic-preserving ({AP}) schemes for some multiscale
  kinetic equations.
\newblock {\em SIAM J. Sci. Comput.}, 21(2):441--454, 1999.

\bibitem{Jones}
C.~K. R.~T. Jones.
\newblock Geometric singular perturbation theory.
\newblock In {\em Dynamical systems ({M}ontecatini {T}erme, 1994)}, volume 1609
  of {\em Lecture Notes in Math.}, pages 44--118. Springer, Berlin, 1995.

\bibitem{ks2011}
M.~Krupa and P.~Szmolyan.
\newblock Extending geometric singular perturbation theory to nonhyperbolic
  points---fold and canard points in two dimensions.
\newblock {\em SIAM J. Math. Anal.}, 33(2):286--314, 2001.

\bibitem{ks2001/2}
M.~Krupa and P.~Szmolyan.
\newblock Extending slow manifolds near transcritical and pitchfork
  singularities.
\newblock {\em Nonlinearity}, 14(6):1473--1491, 2001.

\bibitem{ks2001/3}
M.~Krupa and P.~Szmolyan.
\newblock Relaxation oscillation and canard explosion.
\newblock {\em J. Differential Equations}, 174(2):312--368, 2001.

\bibitem{KuehnUM}
C.~Kuehn.
\newblock Normal hyperbolicity and unbounded critical manifolds.
\newblock {\em Nonlinearity}, 27(6):1351--1366, 2014.

\bibitem{ku2015}
C.~Kuehn.
\newblock {\em Multiple time scale dynamics}, volume 191 of {\em Applied
  Mathematical Sciences}.
\newblock Springer, Cham, 2015.

\bibitem{KuehnHyp}
C.~Kuehn.
\newblock A remark on geometric desingularization of a non-hyperbolic point
  using hyperbolic space.
\newblock {\em J. Phys. Conf. Ser.}, 727:012008, 2016.

\bibitem{DeMaesschalckDumortier4}
P.~De Maesschalck and F.~Dumortier.
\newblock Time analysis and entry-exit relation near planar turning points.
\newblock {\em J. Difference Equ. Appl.}, 215:225--267, 2005.

\bibitem{DeMaesschalckDumortier7}
P.~De Maesschalck and F.~Dumortier.
\newblock Singular perturbations and vanishing passage through a turning point.
\newblock {\em J. Differential Equations}, 248:2294--2328, 2010.

\bibitem{DeMaesschalckWechselberger}
P.~De Maesschalck and M.~Wechselberger.
\newblock Neural excitability and singular bifurcations.
\newblock {\em J. Math. Neurosci.}, 5(1):16, 2015.

\bibitem{ns95}
K.~Nipp and D.~Stoffer.
\newblock Invariant manifolds and global error estimates of numerical
  integration schemes applied to stiff systems of singular perturbation type.
  {I}. {RK}-methods.
\newblock {\em Numer. Math.}, 70(2):245--257, 1995.

\bibitem{ns96}
K.~Nipp and D.~Stoffer.
\newblock Invariant manifolds and global error estimates of numerical
  integration schemes applied to stiff systems of singular perturbation type.
  {II}. {L}inear multistep methods.
\newblock {\em Numer. Math.}, 74(3):305--323, 1996.

\bibitem{ns2013}
K.~Nipp and D.~Stoffer.
\newblock {\em Invariant manifolds in discrete and continuous dynamical
  systems}, volume~21 of {\em EMS Tracts in Mathematics}.
\newblock European Mathematical Society (EMS), Z\"urich, 2013.

\bibitem{Poe03}
C.~P\"otzsche.
\newblock Slow and fast variables in non-autonomous difference equations.
\newblock {\em J. Difference Equ. Appl.}, 9(5):473--487, 2003.
\newblock Dedicated to Professor George R. Sell on the occasion of his 65th
  birthday.

\bibitem{WigginsIM}
S.~Wiggins.
\newblock {\em Normally hyperbolic invariant manifolds in dynamical systems},
  volume 105 of {\em Applied Mathematical Sciences}.
\newblock Springer-Verlag, New York, 1994.
\newblock With the assistance of Gy\"orgy Haller and Igor Mezi\'c.

\end{thebibliography}

\end{document}